\newtheorem{theorem}{Theorem}[section]
\newtheorem*{theo*}{Theorem}
\newtheorem{corollary}[theorem]{Corollary}
\newtheorem{lemma}[theorem]{Lemma}
\newtheorem{proposition}[theorem]{Proposition}
\newtheorem{conj}[theorem]{Conjecture}
\newtheorem{ex*}{Example}[section]
\theoremstyle{definition}
\newtheorem{definition}[theorem]{Definition}
\theoremstyle{remark}
\newtheorem{remark}[theorem]{Remark}
\theoremstyle{definition}
\newcommand{\brk}{{\underline{{\bf rk}}}}
\newcommand{\bb}[1]{\mathbf{#1}}
\newcommand{\CC}{\mathbb{C}}
\newcommand{\PP}{\mathbb{P}}
\newcommand{\NN}{\mathbb{N}}
\newcommand{\ZZ}{\mathbb{Z}}
\newcommand{\VSPb}{\underline{\mathrm{VSP}}}
\newcommand{\Ann}{\mathrm{Ann}}
\newcommand{\HF}{\mathrm{HF}}
\newcommand{\Hilb}{\mathrm{Hilb}}
\date{}
\subjclass[2020]{14N07, 15A69, 14C05, 68Q17}
\keywords{Tensors, Border Comon's conjecture, Border apolarity}
\title{Symmetrization maps and minimal border rank Comon's conjecture}
\date{}
\author{Tomasz Ma\'ndziuk}
\address{Texas A\&M University, Department of Mathematics,
	College Station, TX 77843-3368, USA}
\email{t.mandziuk@tamu.edu}
\author{Emanuele Ventura}
\address{Politecnico di Torino, Dipartimento di Scienze Matematiche ``G. L. Lagrange'', Corso Duca degli Abruzzi 24\\
	10129 Torino, Italy}
\email{emanuele.ventura@polito.it}
\begin{document}
	
	\maketitle
	
	\begin{abstract}
		One of the fundamental open problems in the field of tensors is the {\it border Comon's conjecture}: given 
		a symmetric tensor $F\in(\CC^n)^{\otimes d}$ for $d\geq 3$,
		its border and symmetric border ranks are equal.
		In this paper, we prove the conjecture for large classes of concise tensors in $(\CC^n)^{\otimes d}$ of border rank $n$, i.e., tensors of minimal border rank. These families include all tame tensors and all tensors whenever $n\leq d+1$. 
		Our technical tools are border apolarity and border varieties of sums of powers.

	\end{abstract}
	
	\section{Introduction}
	Among the various significant notions of rank associated with tensors, border rank stands out for its geometric character, as it is defined in terms of secant varieties. Buczy\'nska and Buczy\'nski \cite{bb19} introduced a novel approach to lower bound this rank, {\it border apolarity}, which translates this geometric concept into an algebraic, ideal-theoretic framework.
	
	Border tensor rank arises in several important contexts, notably in computational complexity \cite{BCS97, Lands2011, Lands2017}, with roots tracing back to the work of Strassen \cite{Str69} and Bini \cite{Bin80}. As a core of border apolarity, Buczy\'nska and Buczy\'nski defined a parametrization of border rank decompositions via projective varieties, known as {\it border varieties of sums of powers} (denoted $\VSPb$). Recently, properties of these varieties have been explored further by the authors \cite{MV23}.
	
	In computational complexity, this development has provided a potential avenue for addressing the well-known {\it barriers} that hinder the use of vector bundle techniques in establishing strong lower bounds on border ranks \cite{Ga17, EGOW18}. These barriers are naturally explained in terms of schematic ranks, as the cactus rank is often significantly lower than the border rank (though not always), while vector bundle methods, such as flattening constructions, yield lower bounds for the former. The effectiveness of this new approach was demonstrated by Conner, Harper, and Landsberg \cite{chl19}, who derived the following lower bounds on the border rank of rectangular matrix multiplication: $\brk(M_{\langle 2,n,n\rangle}) \geq n^2 + 1.32n$ and $\brk(M_{\langle 3,n,n\rangle}) \geq n^2 + 1.6n$. Prior results lacked the linear terms in $n$.
	
	A fundamental open problem in the field of tensors, which we refer to as {\it border Comon's conjecture}, parallels Comon's conjecture for tensor rank \cite[\S 4.1]{CGLM08} \cite{GOV19}, exploring the tension between border rank and symmetric border rank \cite[\S 2, open problem (1)]{O08}. 
	
	This problem is largely open even for minimal border rank and constitutes the primary motivation for our paper. 
	
	\begin{conj}[{\bf Border Comon's conjecture for minimal border rank}]\label{quest}
		Let $F\in (\CC^n)^{\otimes d}$ be a concise symmetric tensor of minimal border tensor rank, i.e., $\brk(F)=n$, and let $p_F$ be its corresponding polynomial. Then $\brk_S(p_F) = \brk(F)$.  
	\end{conj}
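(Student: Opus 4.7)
My plan is to work inside the border apolarity framework of Buczy\'nska--Buczy\'nski and the associated border varieties of sums of powers $\VSPb$, translating the conjecture into a question about the existence of an $S_d$-invariant apolar ideal with prescribed Hilbert data. Given a concise symmetric $F\in(\CC^n)^{\otimes d}$ with $\brk(F)=n$, border apolarity assigns to any length-$n$ border rank decomposition of $F$ a point of $\VSPb(F,n)$, i.e.\ a multigraded ideal $I\subset R=\mathrm{Sym}\,V_1^*\otimes\cdots\otimes\mathrm{Sym}\,V_d^*$ (with $V_i=\CC^n$) which is apolar to $F$, has Hilbert function bounded by $n$ in every multidegree, and arises as the flat limit of ideals of $n$ reduced points in sufficiently general position.

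The first step exploits the symmetry. The group $S_d$ acts on $R$ by permuting the tensor factors, hence on the multigraded Hilbert scheme containing $\VSPb(F,n)$, and since $F$ is itself $S_d$-invariant this action preserves $\VSPb(F,n)$. A length-$n$ \emph{symmetric} border rank decomposition of $p_F$ corresponds precisely to an $S_d$-fixed point of $\VSPb(F,n)$: such an ideal descends along the symmetrization/diagonal map $\mathrm{Sym}\,V^*\hookrightarrow R$ to an apolar ideal of $p_F$ in $\mathrm{Sym}\,V^*$ with the Hilbert function of $n$ smoothable points. So the task reduces to producing such a fixed point.

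For the second step I would try to construct one by symmetrizing an arbitrary $[I]\in\VSPb(F,n)$. The natural candidates are the $S_d$-invariant ideal $\bigcap_{\sigma\in S_d}\sigma\cdot I$ and the pullback of $I$ along the symmetrization map of algebras; in either case one obtains an $S_d$-invariant object, which I would then descend to $\mathrm{Sym}\,V^*$ and verify that it annihilates $p_F$, has the correct Hilbert function, and extends to a flat family of ideals of $n$ reduced points, thereby giving a point of the symmetric analogue of $\VSPb(F,n)$ and hence a symmetric border rank decomposition of length $n$.

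The main obstacle is controlling the Hilbert function after symmetrization. Neither intersection nor diagonal restriction is \emph{a priori} flat, and the Hilbert function can drop in some degree, breaking either apolarity or the bound of $n$. In favourable regimes this should be bypassable: when $n\le d+1$, the degree-$\le d$ part of any apolar ideal already determines its Hilbert function, and symmetrization can be controlled directly in that range; similarly, when the apolar scheme is tame enough to admit an explicit smoothing, the symmetrization can be carried out by hand on the smoothing. Pushing the method to the full conjecture would require a uniform mechanism for preserving the Hilbert function under $S_d$-averaging, and it is here that I expect the genuine difficulty to lie.
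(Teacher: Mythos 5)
The statement you set out to prove is an open conjecture: the paper does not prove it in general, only for special classes (tame tensors, the range $n\le d+1$, sharp tensors, and $3$-tensors with $n\le 5$), and your proposal is not a proof either — you explicitly leave the decisive step open. Your framework is essentially the paper's: it introduces the diagonal map $\pi\colon S\to V$, a desymmetrizing morphism $\Upsilon\colon \VSPb(p_F,r)\to\VSPb(F,r)$, and a symmetrizing construction $\Sigma$, and Theorem~\ref{thm:equality_of_br_in_terms_of_vspb} shows that $\brk_S(p_F)=r$ if and only if there exists $J\in\VSPb(F,r)$ containing the ideal $I_R$ of the diagonal with $\pi(J_{\mathbf u})=\pi(J_{\mathbf v})$ whenever $|\mathbf u|=|\mathbf v|$, equivalently $\pi(J_{(d,0,\ldots,0)})\subseteq\pi(J_{\mathbf 1})$. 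Two corrections to your second step: a symmetric decomposition does not correspond to an arbitrary $\mathfrak S_d$-fixed point of $\VSPb(F,n)$ — the ideal must contain $I_R$ (automatic in minimal border rank, since $J_{\mathbf v}=\Ann(F)_{\mathbf v}$ in the relevant low multidegrees), and, more seriously, its image in $V$ must lie in $\mathrm{Slip}_{r,Y}$, i.e., be a limit of saturated ideals of reduced points; $\mathfrak S_d$-invariance alone guarantees neither.

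The genuine gap is the one you flag, but it is worth locating precisely where it bites. Averaging constructions such as $\bigcap_{\sigma}\sigma\cdot I$ change the Hilbert function and give no control over membership in $\mathrm{Slip}$; the paper's symmetrizing map $\Sigma$ only lands in $\mathrm{Slip}_{r,Y}$ under an unverified hypothesis on the fibre $Z_R$ of a flag Hilbert scheme (Proposition~\ref{lem:in_Slip_if_in_Slip}). In the cases actually proven, the paper does not average at all: it uses the projection $\rho\colon S\to V$ onto a single tensor factor, for which Proposition~\ref{prop:map_of_Slips} guarantees $\rho(J)\in\mathrm{Slip}_{r,Y}$ for free, and the entire difficulty is then concentrated in showing $\rho(J)\subseteq\Ann(p_F)$, i.e., condition (iii) above. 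That is exactly where the hypotheses $n\le d+1$ (via Macaulay's bound), tameness (via saturation of $J$), or sharpness enter. Your proposal correctly identifies the obstruction but supplies no mechanism to overcome it in general, so it does not establish the conjecture; as a strategy sketch it is consistent with, though less sharp than, the equivalence in Theorem~\ref{thm:equality_of_br_in_terms_of_vspb}.
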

	From definitions,  $\brk(F)\leq \brk_S(p_F)$. This conjecture has already been explicitly tackled e.g. in \cite{bgl13,Fri16}. 
	Specifically, Buczy\'nski, Ginensky, and Landsberg
	called it the {\it $\mathrm{BRPP}_n$ version of Comon's conjecture}, in our context restricted to the dense set of concise symmetric tensors. This was part of a general framework devised to formulate and approach several basic (and some of them yet unanswered to this day) questions on tensors in a unified terminology. In \cite{bgl13}, it is shown that Conjecture \ref{quest} holds true when the $n$th secant of the $d$th Veronese variety $\nu_d(\PP^{n-1})$
	is defined by flattenings
	(or, more generally, its set-theoretic equations are inherited from those of the corresponding secant of the Segre variety); however this condition does not cover the minimal border rank case for arbitrary choices of $n$ and $d$.
	For $1$-generic $3$-tensors ($d=3$), the validity of the border Comon's conjecture is known \cite[Proposition 5.6.1.6]{Lands2017}.

	\subsection{Results}
	We now showcase more precisely the content of this article. 
	The overall plan is to compare $\VSPb(F,r)$ and $\VSPb(p_F,r)$, the two varieties parametrizing border rank decompositions of a tensor $F$ and its corresponding homogeneous polynomial $p_F$ of degree $d$; see Definition \ref{defin:vspb}. 
	
	\begin{theo*}[Theorem \ref{theo:1}]
		Let $n\leq r\leq \binom{n+2}{2}$. There exists a  morphism (called \textnormal{desymmetrizing}) $\Upsilon: \VSPb(p_F,r)\rightarrow \VSPb(F,r)$. 
	\end{theo*}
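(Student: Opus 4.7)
My plan is to define $\Upsilon$ at the level of ideals via symmetrization and then upgrade the construction to a scheme morphism by carrying it out in families. Write $V = \CC^n$, $R = \mathrm{Sym}(V^*)$, and $S = R^{\otimes d}$, regarded as the $\NN^d$-graded polynomial ring in variables $y_j^{(i)}$ of multidegree $e_i$. Let $\pi \colon S \twoheadrightarrow R$ denote the symmetrization ring homomorphism sending $y_j^{(i)} \mapsto y_j$, so that the induced map $\Spec R \hookrightarrow \Spec S$ is the small diagonal. Recall from Definition \ref{defin:vspb} that a point of $\VSPb(p_F,r)$ is a homogeneous ideal $I \subset R$ apolar to $p_F$ with a prescribed Hilbert function, while a point of $\VSPb(F,r)$ is a multihomogeneous ideal $J \subset S$ apolar to $F$ with a prescribed multigraded Hilbert function.

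Given such an $I$, I would define
\[
J_\alpha \;:=\; \pi_\alpha^{-1}\bigl(I_{|\alpha|}\bigr) \qquad (\alpha \in \NN^d)
\]
and set $\Upsilon(I) := J = \bigoplus_\alpha J_\alpha$. Three checks are then required. (i) \emph{Ideal structure:} multihomogeneity is automatic, and for $f \in J_\alpha$, $g \in S_\beta$ one has $\pi_{\alpha+\beta}(fg) = \pi_\alpha(f)\pi_\beta(g) \in I$, so $fg \in J_{\alpha+\beta}$. (ii) \emph{Hilbert bound:} $\pi_\alpha$ induces an injection $S_\alpha/J_\alpha \hookrightarrow R_{|\alpha|}/I_{|\alpha|}$, giving $\dim S_\alpha/J_\alpha \leq \HF(R/I,|\alpha|) \leq r$. (iii) \emph{Apolarity:} by compatibility of the apolar pairings on $S$ and $R$ through $\pi$ (reflecting the fact that $F$ is the symmetric tensor obtained from $p_F$), the hypothesis that $I$ annihilates $p_F$ implies that $J$ annihilates $F$.

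The main obstacle will be to show that $J$ lies in the specific multigraded Hilbert stratum defining $\VSPb(F,r)$, i.e.\ that the inequalities in (ii) are equalities in all the multidegrees appearing in that stratum. Here the hypothesis $n \leq r \leq \binom{n+2}{2}$ should enter decisively: combined with conciseness and $\brk(F) = n$, it pins down $\HF(R/I,\bullet)$ (forcing values $1, n, r, r, \dots$) and bounds the regularity of $I$, so that only finitely many multidegrees need to be checked. For an honest symmetric decomposition $F = \sum \ell_i^{\otimes d}$, $\Upsilon$ produces the multigraded ideal of the diagonal copies $([\ell_i], \dots, [\ell_i]) \in (\PP V^*)^d$, which is a genuine point of $\VSPb(F,r)$ with precisely the expected Hilbert function; Hilbert-function constancy in flat families then extends this to all of $\VSPb(p_F,r)$.

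Finally, to obtain a morphism of schemes I would repeat the construction relatively. For a test scheme $T$ and a universal family of ideals $\mathcal{I} \subset R_T$, set $\mathcal{J}_\alpha := (\pi_T)_\alpha^{-1}(\mathcal{I}_{|\alpha|})$. Since $\pi_\alpha$ is a surjection of free modules and each $\mathcal{I}_{|\alpha|}$ is locally free of constant rank (as a graded piece of a flat family of ideals), so is each $\mathcal{J}_\alpha$, which makes $S_T/\mathcal{J}$ flat over $T$ with the multigraded Hilbert function prescribed by $\VSPb(F,r)$. Functoriality in $T$ then yields the desired morphism $\Upsilon \colon \VSPb(p_F,r) \to \VSPb(F,r)$.
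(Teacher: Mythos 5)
Your construction coincides with the paper's: since $\ker(\pi|_{S_{\mathbf u}})=(I_R)_{\mathbf u}$, where $I_R$ is the ideal of the diagonal, setting $J_\alpha=\pi_\alpha^{-1}(I_{|\alpha|})$ produces exactly the ideal $\Upsilon(I)=I_R+\bigoplus_{\mathbf u}\psi_{\mathbf u}(I_{|\mathbf u|})$ of Proposition~\ref{lem:morphism_well_defined}, and your checks (i)--(iii) match the paper's. Two remarks on step (ii): because $\pi_\alpha$ is surjective, the induced map $S_\alpha/J_\alpha\to V_{|\alpha|}/I_{|\alpha|}$ is an isomorphism, not merely an injection, so the multigraded Hilbert function of $S/J$ is automatically $\alpha\mapsto h_{r,Y}(|\alpha|)$; the real constraint is that this must equal $h_{r,X}(\alpha)=\min\{r,\dim_\CC S_\alpha\}$ in \emph{mixed} multidegrees such as $\mathbf e_1+\mathbf e_2$, which forces $r\leq\binom{n+1}{2}$ (the bound actually used in Theorem~\ref{theo:1}; for $\binom{n+1}{2}<r\leq\binom{n+2}{2}$ the constructed $J$ does not lie in $\Hilb_S^{h_{r,X}}$ at all). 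Also, step (iii) is not purely formal: one needs the polarization identity $\theta\circ F=\tfrac{1}{d!}\,\pi(\theta)\circ p_F$ for $\theta\in S_{\mathbf 1}$, giving $\pi^{-1}(\Ann(p_F)_d)=\Ann(F)_{\mathbf 1}$, together with the reduction of the containment $J\subseteq\Ann(F)$ to the single multidegree $\mathbf 1$ (Proposition~\ref{prop:inclusion in onedeg}).

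The genuine gap is the membership $\Upsilon(I)\in\mathrm{Slip}_{r,X}$. A point of $\VSPb(F,r)$ is required to lie in the specific irreducible component $\mathrm{Slip}_{r,X}$, not merely in the Hilbert stratum with $J\subseteq\Ann(F)$, and your proposal omits this condition from the definition and then anchors the verification at the wrong locus: you check the claim on ``honest symmetric decompositions $p_F=\sum\ell_i^{d}$'' and propose to extend over $\VSPb(p_F,r)$ by flatness. But $\VSPb(p_F,r)$ need not contain any ideal of $r$ distinct points apolar to $p_F$ (this is exactly the situation for wild forms), so its points are in general not limits of honest decompositions of $p_F$, and there is nothing to extend from. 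The correct closure argument runs over all of $\mathrm{Slip}_{r,Y}$: one shows that $\Upsilon\colon\Hilb_V^{h_{r,Y}}\to\Hilb_S^{h_{r,X}}$ sends radical ideals to radical ideals and $B_Y$-saturated ideals to $B_X$-saturated ideals (Propositions~\ref{lem:rad_to_rad} and~\ref{lem:sat_to_sat}, each a short but nontrivial verification using the splitting $S_{\mathbf u}=(I_R)_{\mathbf u}\oplus\psi_{\mathbf u}(V_{|\mathbf u|})$), hence maps the dense distinct-points locus of $\mathrm{Slip}_{r,Y}$ into $\mathrm{Slip}_{r,X}$, and one concludes by continuity of $\Upsilon$ and closedness of $\mathrm{Slip}_{r,X}$. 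Your final paragraph on the relative construction supplies the needed scheme-theoretic morphism, so the fix is to combine it with this closure argument rather than with flat families inside $\VSPb(p_F,r)$.
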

	
	Given a product of projective space $\PP^{n_1}\times \cdots \times \PP^{n_d}$, its homogeneous coordinate ring (also called {\it Cox ring}) is a $\ZZ^d$-graded polynomial ring with degree $\mathbf{e}_i\in\ZZ^d$ variables corresponding to homogeneous coordinates of the $i$-th factor. Let $S=\CC[\alpha_{i,j} \mid 1\leq i \leq d$, $1\leq j\leq n]$ and $V=\CC[\beta_j\mid 1\leq j\leq n]$ be the homogeneous coordinate rings of $X=(\mathbb{P}^{n-1})^{\times d}$ and $Y=\mathbb{P}^{n-1}$, respectively. 
	In \S\ref{sec:pi} we introduce the ring map $\pi\colon S\longrightarrow V$ given by $\alpha_{i,j}\mapsto \beta_j$ for all $i,j$, and an ideal $I_R\subset S$, somewhat encoding symmetrizations. Our main result is the following. 
	
	\begin{theo*}[Theorem \ref{thm:equality_of_br_in_terms_of_vspb}]
		Let $n\leq r\leq \binom{n+2}{2}$. Assume that a concise symmetric tensor  $F$ in $(\CC^n)^{\otimes d}$ has border rank $r$. The following conditions are equivalent:
		\begin{enumerate}
			\item[(i)] the corresponding homogeneous polynomial $p_F$ has symmetric border rank $r$;
			\item[(ii)] there exists $J \in \VSPb(F,r)$ such that $I_R\subseteq J$ and $\pi(J_{\mathbf{u}}) = \pi(J_{\mathbf{v}})$ for all $\mathbf{u},\mathbf{v}\in \ZZ^d$ with $|\mathbf{u}| = |\mathbf{v}|$;
			\item[(iii)] there exists $J \in \VSPb(F,r)$ such that $\pi(J_{(d,0,\ldots, 0)}) \subseteq \pi(J_{\mathbf{1}})$.
		\end{enumerate}
	\end{theo*}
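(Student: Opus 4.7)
The plan is to close the cycle (i)$\Rightarrow$(ii)$\Rightarrow$(iii)$\Rightarrow$(i). The implication (ii)$\Rightarrow$(iii) is immediate, because $(d,0,\ldots,0)$ and $\mathbf{1}=(1,1,\ldots,1)$ both have total degree $d$, so the blanket equality in (ii) specializes to the containment (in fact equality) demanded by (iii). The real content lies in the two outer implications.

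For (i)$\Rightarrow$(ii), I would apply the desymmetrizing morphism $\Upsilon$ from Theorem~\ref{theo:1}. Since (i) guarantees $\VSPb(p_F,r)$ is non-empty, pick any $J_0\in\VSPb(p_F,r)$ and set $J:=\Upsilon(J_0)\in\VSPb(F,r)$. By the construction of $\Upsilon$, and in particular by the role that the symmetrization ideal $I_R$ plays in \S\ref{sec:pi}, one expects $J$ to satisfy $I_R\subseteq J$ and $\pi(J_{\mathbf{u}})=(J_0)_{|\mathbf{u}|}$ in every multi-degree $\mathbf{u}$; the latter immediately implies $\pi(J_{\mathbf{u}})=\pi(J_{\mathbf{v}})$ whenever $|\mathbf{u}|=|\mathbf{v}|$, which is exactly (ii). The remaining bookkeeping is to unpack the definition of $\Upsilon$ and verify these two properties from its explicit description.

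The substantive direction is (iii)$\Rightarrow$(i). Given $J$ satisfying (iii), I would build $\tilde J\in\VSPb(p_F,r)$ by setting $\tilde J_k:=\pi(J_{\mathbf{u}})$ for any (eventually, some) $\mathbf{u}$ with $|\mathbf{u}|=k$, and then verifying that $\tilde J=\bigoplus_k \tilde J_k$ is a well-defined homogeneous ideal of $V$ with the Hilbert function prescribed for $\VSPb(p_F,r)$ and with $\tilde J\subseteq p_F^{\perp}$. Once well-definedness is in hand, $\tilde J$ is visibly an ideal since $\pi$ is a ring map and $J$ is $S$-stable, apolarity $\tilde J\subseteq p_F^{\perp}$ follows from the apolarity of $J$ to $F$ together with the identification of $p_F$ with the symmetrization of $F$, and the Hilbert function of $V/\tilde J$ is read off from the multi-graded Hilbert function of $S/J$ via $\pi$, giving the inequality $\brk_S(p_F)\leq r$; the reverse inequality is automatic from $\brk(F)=r$.

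The main obstacle is precisely well-definedness: upgrading the single containment $\pi(J_{(d,0,\ldots,0)})\subseteq\pi(J_{\mathbf{1}})$ in (iii) to the multi-degree equalities $\pi(J_{\mathbf{u}})=\pi(J_{\mathbf{v}})$ for all $|\mathbf{u}|=|\mathbf{v}|$. The strategy I would pursue is a rigidity/dimension-count argument: the multi-graded Hilbert function of any $J\in\VSPb(F,r)$, together with conciseness and symmetry of $F$, should force $\dim \pi(J_{\mathbf{u}})$ to depend only on $|\mathbf{u}|$; combined with the one assumed inclusion in total degree $d$, this rigidity ought to propagate to equality of the $\pi$-images across every multi-degree of the same total degree, possibly through a flat degeneration argument using that points of $\VSPb$ arise as limits of ideals of $r$ reduced points. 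Without establishing this $\pi$-rigidity, the single-degree containment in (iii) is too weak to assemble a global ideal in $V$, so this step is the crux of the proof.
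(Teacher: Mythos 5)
Your treatment of (i)$\Rightarrow$(ii) via the desymmetrizing morphism $\Upsilon$ and of (ii)$\Rightarrow$(iii) matches the paper, but your plan for (iii)$\Rightarrow$(i) has a genuine gap, and you partly name it yourself: the ``$\pi$-rigidity'' upgrading the single containment $\pi(J_{(d,0,\ldots,0)})\subseteq\pi(J_{\mathbf{1}})$ to equalities $\pi(J_{\mathbf{u}})=\pi(J_{\mathbf{v}})$ across all multidegrees of equal total degree is never established, and there is no reason to expect it for an \emph{arbitrary} $J$ satisfying (iii) --- condition (ii) only asserts the existence of some such $J$, produced a posteriori from (i) via $\Upsilon$, not that every $J$ as in (iii) enjoys this property. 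There is a second, unacknowledged gap: even granting well-definedness, an ideal $\tilde J\subset V$ with Hilbert function $h_{r,Y}$ and $\tilde J\subseteq\Ann(p_F)$ is not automatically a point of $\VSPb(p_F,r)$; you must also show $\tilde J\in\mathrm{Slip}_{r,Y}$, i.e., that it is a limit of saturated ideals of $r$ reduced points, and your outline never addresses this (the multigraded Hilbert scheme can have components other than $\mathrm{Slip}$).

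The paper sidesteps both problems by using only the degrees $(k,0,\ldots,0)$: it sets $I:=\rho(J)$, where $\rho$ kills all variables except those of the first factor, so that $I=J\cap V$ under the identification $\beta_j\mapsto\alpha_{1,j}$. Proposition~\ref{prop:map_of_Slips} (quoting \cite{Man23}) guarantees that this elimination maps $\mathrm{Slip}_{r,X}$ into $\mathrm{Slip}_{r,Y}$, which disposes of the Slip-membership issue, and by the one-degree apolarity criterion (Proposition~\ref{prop:inclusion in onedeg}) the inclusion $I\subseteq\Ann(p_F)$ need only be checked in degree $d$, where it reads $I_d=\pi(J_{(d,0,\ldots,0)})\subseteq\pi(J_{\mathbf{1}})\subseteq\pi(\Ann(F)_{\mathbf{1}})=\Ann(p_F)_d$ by Lemma~\ref{lem:image_is_equal}. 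Hypothesis (iii) is tailored precisely to this argument, and no rigidity across multidegrees is needed. To salvage your route you would essentially have to reprove the Slip statement of Proposition~\ref{prop:map_of_Slips} for your degreewise construction, at which point you might as well use $\rho$ directly.
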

	
	We believe this result may be employed to try to find a counterexample to border Comon's conjecture, not necessarily in minimal border rank.  
	The first corollary of Theorem~\ref{thm:equality_of_br_in_terms_of_vspb} improves on a result of Buczy\'nski, Ginensky, and Landsberg: they
	proved that whenever $F\in (\CC^n)^{\otimes d}$ has border rank $\brk(F)=r$ with $d\geq 2r-1$, then Conjecture \ref{quest} is true,
	without the concise condition on $F$. We improve this bound as follows.

	\begin{theo*}[Corollary \ref{cor:nleq d+1}]
		If a concise symmetric tensor $F\in (\CC^n)^{\otimes d}$ has border rank $n$ and $n\leq d+1$, then $p_F$ has minimal symmetric border rank.
	\end{theo*}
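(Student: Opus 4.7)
The plan is to apply Theorem~\ref{thm:equality_of_br_in_terms_of_vspb} with $r=n$: since $\brk(F)=n$, the variety $\VSPb(F,n)$ is non-empty, and it suffices to exhibit $J\in\VSPb(F,n)$ satisfying condition~(iii), namely the single inclusion $\pi(J_{(d,0,\ldots,0)})\subseteq \pi(J_{\mathbf{1}})$ inside $V_d$.

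The first ingredient is dimension bookkeeping. Conciseness of $F$ pins the multigraded Hilbert function: $\HF(S/J)_{\mathbf{u}} = n$ for every multidegree $\mathbf{u}$ with $|\mathbf{u}|\geq 1$. Since $\pi$ restricts to an isomorphism $S_{(d,0,\ldots,0)}\xrightarrow{\sim}V_d$, the subspace $\pi(J_{(d,0,\ldots,0)})$ has codimension exactly $n$ in $V_d$; and the surjection $\pi\colon S_{\mathbf{1}}\twoheadrightarrow V_d$ induces a surjection $(S/J)_{\mathbf{1}}\twoheadrightarrow V_d/\pi(J_{\mathbf{1}})$ from an $n$-dimensional space, so $\pi(J_{\mathbf{1}})$ has codimension at most $n$. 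Thus both candidate subspaces sit in $V_d$ with controlled codimension; what remains is to nest them correctly.

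The main step is exhibiting a suitable $J$. Exploiting that $F$ is symmetric, the group $S_d$ acts on $\VSPb(F,n)$ by permuting tensor factors. The strategy is to find $J$ whose multigraded pieces $\pi(J_{\mathbf{u}})$ depend only on $|\mathbf{u}|$---for example, by locating an $S_d$-fixed point in $\VSPb(F,n)$---which would yield the stronger condition~(ii) of the main theorem, and in particular the sought~(iii). The hypothesis $n\leq d+1$ enters decisively through two features of this unbalanced regime: first, the numerical slack $n^d-\binom{n+d-1}{d}\geq n$ (for $d\geq 2$) provides enough room in $J_{\mathbf{1}}$ to accommodate the required compatibility relations; second, the structural rigidity of concise minimal border rank tensors with $n\leq d+1$ constrains the admissible $J$ enough that a compatible choice can be realized. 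The principal obstacle is precisely this last step: passing from the abstract dimension bounds to an actual $S_d$-compatible $J$, a transition that breaks down for $n>d+1$ and lies at the heart of why the full border Comon conjecture remains open.
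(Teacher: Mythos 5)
Your reduction to condition~(iii) of Theorem~\ref{thm:equality_of_br_in_terms_of_vspb} and your codimension bookkeeping are correct and match the paper's setup (in fact both subspaces have codimension exactly $n$, since the Hilbert function $h_{n,X}$ together with conciseness forces $J_{\mathbf v}=\Ann(F)_{\mathbf v}$ for $\mathbf v$ of total degree $2$, hence $I_R\subseteq J$, and then Lemma~\ref{lem:ker_of_pi} applies). But the main step of your argument is not carried out, and the route you sketch is not the one that works. You propose to locate an $\mathfrak S_d$-compatible (or fixed) ideal $J$ realizing the stronger condition~(ii); you give no construction of such a $J$, no reason a finite-group action on $\VSPb(F,n)$ should have a fixed point, and no argument that ``numerical slack'' or ``structural rigidity'' produces one. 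You yourself flag this as the principal obstacle, so as written the proof has a genuine gap exactly where the content lies. Note also that producing a symmetrization-compatible $J$ is essentially the content of Corollary~\ref{prop:border_ranks_agree}, which the paper only establishes under an unverified hypothesis on the flag Hilbert scheme locus $Z_R$ --- so this direction is known to be hard, not merely unexplained.

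The paper's actual argument avoids any equivariance: it takes an \emph{arbitrary} $J\in\VSPb(F,n)$ and proves (iii) for it by contradiction, using Macaulay's growth bound. If $\pi(J_{(d,0,\ldots,0)})\not\subseteq\pi(J_{\mathbf 1})$, let $K\subseteq V$ be the ideal generated by $\pi(J_{\mathbf 1})$, $\pi(J_{(d,0,\ldots,0)})$ and $\pi(J_{(d,1,\ldots,1)})$; then $\HF(V/K,d)\leq n-1$, and since $n-1\leq d$ the Macaulay bound forces $\HF(V/K,a)\leq n-1$ for all $a\geq d$ --- this is the only place $n\leq d+1$ enters. On the other hand $K_{2d-1}=\pi(J_{(d,1,\ldots,1)})$ has codimension exactly $n$ in $V_{2d-1}$ by Lemma~\ref{lem:ker_of_pi}, giving $\HF(V/K,2d-1)=n$, a contradiction. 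This Hilbert-function growth argument is the missing idea; your dimension estimates alone cannot produce the required inclusion, since two codimension-$n$ subspaces of $V_d$ need not be nested.
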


	The second result following from the main theorem establishes that Conjecture \ref{quest} holds for tame tensors, i.e., those whose smoothable rank is equal to border rank; see Definitions \ref{def:brk} and \ref{def:smoothrk}.  
	
	\begin{theo*}[Corollary \ref{cor:special_case_nonwil}]
		If a concise symmetric tame tensor $F\in (\CC^n)^{\otimes d}$ has minimal border rank, then $p_F$ has minimal symmetric smoothable rank. In particular, $p_F$ has minimal symmetric border rank.
	\end{theo*}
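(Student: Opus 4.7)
The corollary's two conclusions---minimal symmetric smoothable rank of $p_F$ and minimal symmetric border rank of $p_F$---reduce to the first, since any symmetric smoothable decomposition yields a symmetric border decomposition and since $\brk_S(p_F)\geq \brk(F)=n$ provides the matching lower bound in both cases. I therefore focus on producing a smoothable $0$-dimensional scheme $R\subset\PP^{n-1}$ of length $n$ whose ideal is apolar to $p_F$.

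The starting point is tameness. Because the smoothable rank of $F$ equals $\brk(F)=n$, there is a smoothable $0$-dimensional scheme $Z\subset(\PP^{n-1})^d$ of length $n$ whose multigraded ideal $J:=I(Z)\subset S$ is apolar to $F$, giving a point $[J]\in \VSPb(F,n)$. My plan is to show that $Z$ may be chosen to be supported on the diagonal $\Delta(\PP^{n-1})\subset(\PP^{n-1})^d$, whence $Z$ is the scheme-theoretic image of a smoothable length-$n$ scheme $R\subset\PP^{n-1}$ under the diagonal embedding. Apolarity of $Z$ to $F$ then transfers through the symmetrization map $\pi\colon S\to V$ to apolarity of $R$ to $p_F$, delivering the desired $R$. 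As a byproduct, condition (ii) of Theorem~\ref{thm:equality_of_br_in_terms_of_vspb} is automatic in this situation, since $\pi(J_{\mathbf{u}})=I(R)_{|\mathbf{u}|}$ depends only on $|\mathbf{u}|$.

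The crux, and main obstacle, is to force $Z$ onto the diagonal. My approach would exploit the $S_d$-symmetry of $F$: the permutation action on the factors of $(\PP^{n-1})^d$ fixes $F$ and therefore acts on the locus of smoothable length-$n$ schemes apolar to $F$. Concision of $F$ together with the Hilbert function constraints enforced by minimal border rank (in the spirit of \cite{MV23}) should pin this locus down tightly enough that the $S_d$-orbit of $[J]$ in $\VSPb(F,n)$ collapses to a fixed point, forcing $Z$ to lie on the diagonal. The delicate point is that naive averaging of the scheme over $S_d$ multiplies the length by up to a factor of $d!$ and so cannot be used directly; instead one must show that an $S_d$-invariant smoothable apolar scheme of length exactly $n$ already exists, using the rigidity provided by the combination of minimal border rank, concision, and tameness.
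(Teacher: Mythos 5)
Your reduction to producing a smoothable length-$n$ scheme $R\subset\PP^{n-1}$ apolar to $p_F$ is correct, and your starting point (tameness gives a smoothable length-$n$ scheme $Z\subset(\PP^{n-1})^{\times d}$ apolar to $F$) matches the paper. But the crux of your argument --- that $Z$ can be forced to lie on the diagonal --- is a genuine gap, for two reasons. First, you do not prove it: the passage ``Concision of $F$ together with the Hilbert function constraints \dots should pin this locus down tightly enough that the $\mathfrak S_d$-orbit of $[J]$ \dots collapses to a fixed point, forcing $Z$ to lie on the diagonal'' is an expression of hope, not an argument, and you yourself note that the obvious averaging device fails because it inflates the length. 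Second, and more seriously, the statement you would need is essentially a stronger form of the corollary itself: a smoothable apolar scheme supported on the diagonal \emph{is} a symmetric smoothable decomposition, so deducing its existence from ``rigidity'' without proof begs the question. Even for reduced $Z$ (honest rank decompositions), a fixed point of the $\mathfrak S_d$-action on the set of apolar schemes need not be diagonal --- invariance of the set $\{(a_i,b_i,c_i)\}$ under cyclic permutation of coordinates does not force $a_i=b_i=c_i$ --- and for non-reduced smoothable $Z$ the situation is worse.

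The paper circumvents the diagonal question entirely. It first observes that $J=I_Z\subseteq\Ann(F)$ forces $\HF(S/J,\cdot)=h_{n,X}$, hence $I_R\subseteq J$, and that $J\in\mathrm{Slip}_{n,X}$ by the surjectivity of $\mathrm{Slip}_{n,X}\to\Hilb^n_{sm}(X)$. It then \emph{projects to a single factor} via the map $\rho$ (i.e., $\rho(J)=J\cap V$ under $\beta_j\mapsto\alpha_{1,j}$), which by Proposition~\ref{prop:map_of_Slips} lands in $\mathrm{Slip}_{n,Y}$ and is saturated, so it is the ideal of a smoothable length-$n$ scheme in $\PP^{n-1}$. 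The only work left is to show $\rho(J)\subseteq\Ann(p_F)$, which reduces to $\pi(J_{(d,0,\ldots,0)})\subseteq\pi(J_{\mathbf 1})$; this is proved by a dimension count from Lemma~\ref{lem:ker_of_pi} (both sides have codimension $n$ in $V_d$ since $I_R\subseteq J$) combined with the $B_X$-saturation of $J$. If you want to salvage your approach, replace ``push $Z$ onto the diagonal'' by ``take the image of $Z$ under projection to one factor'' and then verify apolarity by the saturation argument --- that is exactly the paper's route.
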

	
	In the context of $3$-tensors, Jelisiejew, Landsberg, and Pal introduced the important notion of $111$-sharpness \cite{JLP24}. We introduce the family
	of {\it sharp} tensors in $(\CC^{n})^{\otimes d}$ in Definition \ref{def:sharp}. For concise symmetric minimal border rank $3$-tensors, $111$-sharpness 
	is equivalent to sharpness; see Proposition \ref{prop:generalizes}. We show that Conjecture \ref{quest} is true for sharp tensors of minimal border rank, and so 
	is for concise $111$-sharp $3$-tensors. 
	
	\begin{theo*}[Theorem \ref{thm:sharp}]
		If $d\geq 3$ and $F \in (\CC^n)^d$ is a symmetric sharp tensor of minimal border rank, then $\brk(F) = \brk_S(p_F)$.
	\end{theo*}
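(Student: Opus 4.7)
The plan is to apply Theorem \ref{thm:equality_of_br_in_terms_of_vspb} with $r = n$ and verify condition (iii): exhibit $J \in \VSPb(F,n)$ satisfying $\pi(J_{(d,0,\ldots,0)}) \subseteq \pi(J_{\mathbf{1}})$. Sharpness (Definition \ref{def:sharp}), modeled on the $111$-sharpness of Jelisiejew, Landsberg, and Pal \cite{JLP24}, should furnish a canonical candidate, namely an ideal $J \in \VSPb(F,n)$ whose multidegree-$\mathbf{1}$ graded piece has the maximum possible dimension, equivalently, $\dim(S/J)_{\mathbf{1}} = n$.

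First I would record the multigraded consequences of sharpness. Identifying $S_{(d,0,\ldots,0)}$ with $S^d(\CC^n)$ and $S_{\mathbf{1}}$ with $(\CC^n)^{\otimes d}$, the map $\pi$ restricts on the former to the canonical isomorphism with $V_d = S^d(\CC^n)$ and on the latter to the symmetrization $(\CC^n)^{\otimes d} \twoheadrightarrow S^d(\CC^n)$. Because $F$ is symmetric, the $\mathfrak{S}_d$-action permuting the factors of $X = (\PP^{n-1})^{\times d}$ lifts to $\VSPb(F,n)$, and one may arrange $J$ to be $\mathfrak{S}_d$-invariant. Combined with the minimal border rank bound $\dim(S/J)_{\mathbf{u}} \leq n$, sharpness should pin down the higher-degree pieces: $\dim(S/J)_{\mathbf{u}} = n$ for every $\mathbf{u}$ with $|\mathbf{u}| = d$, and multiplication in $S/J$ should realize canonical identifications between the different total-degree-$d$ pieces.

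The heart of the argument is to compare $\pi(J_{(d,0,\ldots,0)})$ and $\pi(J_{\mathbf{1}})$ as subspaces of $V_d$. For $f \in J_{(d,0,\ldots,0)}$, consider its total polarization $\widetilde{f} \in S_{\mathbf{1}}$, an element whose symmetrization equals $d!\cdot f$ under the canonical identifications. I would argue that $\widetilde{f} \in J_{\mathbf{1}}$: the $\mathfrak{S}_d$-invariance of $J$, together with the fact that the multiplication map from $S_{\mathbf{1}}$ to $S_{(d,0,\ldots,0)}$ modulo $J$ is an isomorphism of $n$-dimensional spaces provided by sharpness, forces polarization to preserve $J$. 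Applying $\pi$ then gives $\pi(\widetilde{f}) = d!\cdot \pi(f)$, and therefore $\pi(J_{(d,0,\ldots,0)}) \subseteq \pi(J_{\mathbf{1}})$, which is condition (iii).

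The main obstacle is the polarization step, i.e., making rigorous that the total polarization of any $f \in J_{(d,0,\ldots,0)}$ lies in $J_{\mathbf{1}}$. This is precisely where the sharpness hypothesis must be spent quantitatively: one has to upgrade the equality $\dim(S/J)_{\mathbf{1}} = n$ to the statement that the multiplication/symmetrization maps between the various total-degree-$d$ graded pieces of $S/J$ are isomorphisms compatible with the $\mathfrak{S}_d$-action, so that polarization modulo $J$ is well-defined and lands in $J_{\mathbf{1}}$. Once this compatibility is established, Theorem \ref{thm:equality_of_br_in_terms_of_vspb} yields $\brk_S(p_F) = n = \brk(F)$, completing the proof.
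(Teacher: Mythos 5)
Your high-level strategy is the right one: reduce to condition (iii) of Theorem~\ref{thm:equality_of_br_in_terms_of_vspb} with $r=n$ (the paper does essentially this, via the map $\rho$). But the heart of the argument --- showing that for $f\in J_{(d,0,\ldots,0)}$ its ``total polarization'' lies in $J_{\mathbf{1}}$ --- is exactly the part you leave unproven, and the mechanism you sketch for it does not work as stated. First, you propose to choose $J$ to be $\mathfrak{S}_d$-invariant; a finite group acting on a nonempty projective variety need not have a fixed point, so the existence of an invariant $J\in\VSPb(F,n)$ is not automatic and would itself require proof. The paper avoids this entirely: it works with an arbitrary $K\in\VSPb(F,n)$ and exploits only the $\mathfrak{S}_d$-invariance of $\Ann(F)$ (which follows from $F$ being symmetric), via the containment $I=\sum_{0<\mathbf{u}<\mathbf{1}}(\Ann(F)_{\mathbf{u}})\subseteq K$. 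Second, there is no multiplication (or symmetrization) map inside $S$ between the multidegrees $\mathbf{1}$ and $(d,0,\ldots,0)$; any comparison of $\pi(J_{(d,0,\ldots,0)})$ with $\pi(J_{\mathbf{1}})$ has to be routed through a chain of honest multiplications by linear forms connecting intermediate multidegrees. This is what the paper's Lemma~\ref{lem:containment} does, inductively passing through the degrees $s\mathbf{e}_1+\mathbf{e}_2+\cdots+\mathbf{e}_{d-s}$ and using Corollary~\ref{cor:image_of_degree_1-e_d} as the base case.

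The decisive omission is that your argument never uses conditions \eqref{it:wild_2} and \eqref{it:wild_3} of Definition~\ref{def:sharp}, which carry essentially all the content of sharpness beyond the generator count. Condition \eqref{it:wild_2} is what forces $I\subseteq K$ (since $\dim_\CC(S/K)_{\mathbf{u}}=n=\dim_\CC(S/\Ann(F))_{\mathbf{u}}$ for $0<\mathbf{u}<\mathbf{1}$), and condition \eqref{it:wild_3} is what gives $K_{(d-1)\mathbf{e}_1+\mathbf{e}_2}=I_{(d-1)\mathbf{e}_1+\mathbf{e}_2}$, i.e., that $K$ in the relevant intermediate degree is generated by pieces of $\Ann(F)$ of degree $<\mathbf{1}$, to which Lemma~\ref{lem:containment} applies. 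The facts you do extract from sharpness ($\dim(S/J)_{\mathbf{1}}=n$ and $\dim(S/J)_{\mathbf{u}}=n$ for $|\mathbf{u}|=d$) hold for \emph{every} $J$ with Hilbert function $h_{n,X}$, sharp or not, so they cannot suffice: a correct proof must invoke \eqref{it:wild_2} and \eqref{it:wild_3} somewhere. Finally, one still needs the generator count of $\Ann(p_F)_d$ (Lemma~\ref{lem:also_sharp_general}, resting on condition \eqref{it:wild_1}) to upgrade the degree-$(d-1)$ containment to the degree-$d$ equality by a dimension count. As written, your ``polarization preserves $J$'' step is an assertion rather than an argument, so the proof has a genuine gap at its central point.
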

	
	\noindent {\bf Organization of the paper.} 
	In \S\ref{sec:prelim}, we collect notation and preliminaries. 
	In \S\ref{sec:pi}, we introduce 
	the map $\pi$ between the polynomial rings $S$ and $V$ and record its properties. In \S\ref{sec: morphism between vspbars}, we construct the desymmetrizing morphism $\Upsilon$ and prove Theorem \ref{theo:1}. We introduce the {\it symmetrizing morphism} $\Sigma$ in Definition \ref{def:symmorphism}. Along with a locus inside a suitable flag multigraded Hilbert scheme, this morphism is used to obtain a sufficient condition providing the validity of Conjecture \ref{quest}; see Corollary \ref{prop:border_ranks_agree}. In \S\ref{sec:rho}, we introduce another map $\rho$ between $S$ and $V$, and show our main Theorem \ref{thm:equality_of_br_in_terms_of_vspb} with its aforementioned corollaries. \\
	
	\noindent {\bf Acknowledgements.}
	We are grateful to J. Buczy\'nski, J. Jelisiejew and J.M. Landsberg for useful comments and discussions. 
	The first author was partially supported by NSF grant AF-2203618.
	The second author is a member of GNSAGA group of INdAM (Italy).   
	This work was partially conducted within the ‘0-Dimensional Schemes, Tensor Theory, and Applications’ project 2022E2Z4AK – funded by European Union – Next Generation EU  within the PRIN 2022 program (D.D. 104 - 02/02/2022 Ministero dell’Universit\`a e della Ricerca).

	\section{Preliminaries}\label{sec:prelim}
	
	Let $S$ be the homogeneous coordinate ring of the $d$-factor Segre variety $X=(\PP^{n-1})^{\times d}$, that is,  $S = \CC[\alpha_{i,j}\mid 1\leq i\leq d$, $1\leq j\leq n]$, where $\mathrm{deg}(\alpha_{i,j})={\bf{e}}_i\in \ZZ^d$ is the $i$th standard basis vector, 
	and let $B_X\subset S$ be the irrelevant ideal, i.e. the product of the irrelevant ideals of the factors.
	Define  $T=\CC[x_{i,j}\mid 1\leq i\leq d$, $1\leq j\leq n]$ to be the graded dual of $S$. Let ${\bf 1}$ denote  $(1,1,\ldots, 1)\in \ZZ^d$.
	
	The {\it multigraded Hilbert function} 
	of a homogeneous ideal $J\subset S$ is the numerical function $\mathrm{HF}(S/J,\cdot): \ZZ^d\to \NN$ defined by 
	\[
	\mathrm{HF}(S/J,\bb{v}) = \dim_{\CC} S_{\bb{v}} - \dim_{\CC} J_{\bb{v}}. 
	\]
	Given $0\neq F\in T_{\bb{v}}$, denote $[F]\in \PP(T_{\bb{v}})$ to be the corresponding point in projective space. 
	
	\begin{definition}[{\bf Apolar ideals}]
		Let $F\in T_{\bb{v}}$. Then its {\it apolar} or {\it annihilator ideal} is the homogeneous ideal $\Ann(F) = \lbrace \psi\in S \ | \ \psi\circ F = 0\rbrace \subset S$.  
	\end{definition}
	
	\begin{proposition}[{\cite[proof of Theorem~1.4]{Ga23}}]\label{prop:inclusion in onedeg}
		If $F\in T_{\bb v}$, then $I\subset \Ann(F) \Longleftrightarrow I_{\bb{v}}\subset \Ann(F)_{\bb{v}}$. 
	\end{proposition}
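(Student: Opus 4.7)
The forward implication is immediate from the definition, so the content lies entirely in $(\Leftarrow)$. My plan is to reduce to multihomogeneous elements and then exploit the perfect pairing between $S$ and $T$ in matching multidegrees.

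First I would fix a multihomogeneous $\psi\in I$, say of multidegree $\bb{u}\in\NN^d$; since $I$ is multigraded, it suffices to show $\psi\circ F=0$ for every such $\psi$. Split into two cases according to whether $\bb{u}\leq \bb{v}$ componentwise. If some coordinate of $\bb{u}$ exceeds the corresponding coordinate of $\bb{v}$, then the contraction action of $S$ on $T$ sends $\psi\circ F$ into $T_{\bb{v}-\bb{u}}$, which is the zero space (since $T$ is supported on $\NN^d$), so $\psi\circ F=0$ automatically.

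The main step is the remaining case $\bb{u}\leq \bb{v}$, so that $\bb{v}-\bb{u}\in\NN^d$. For any $\phi\in S_{\bb{v}-\bb{u}}$ the product $\phi\psi$ lies in $I_{\bb{v}}$, which by hypothesis is contained in $\Ann(F)_{\bb{v}}$. Using associativity of the contraction action,
\[
\phi\circ(\psi\circ F) \;=\; (\phi\psi)\circ F \;=\; 0.
\]
Thus the element $\psi\circ F\in T_{\bb{v}-\bb{u}}$ is annihilated by the entire space $S_{\bb{v}-\bb{u}}$.

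To conclude, I would invoke the fact that $T$ is defined as the graded dual of $S$, so the natural pairing $S_{\bb{w}}\times T_{\bb{w}}\to \CC$ (given by the contraction followed by projection onto $T_{\bb 0}=\CC$) is perfect for every $\bb{w}\in\NN^d$. Applied with $\bb{w}=\bb{v}-\bb{u}$, the vanishing of $\phi\circ(\psi\circ F)$ for all $\phi\in S_{\bb{v}-\bb{u}}$ forces $\psi\circ F=0$, finishing the proof. There is no real obstacle here: the statement is essentially a restatement of nondegeneracy of the apolarity pairing in each multidegree, and the only thing to be careful about is to make the associativity step explicit and to remember to dispose of the $\bb{u}\not\leq\bb{v}$ case before appealing to the pairing.
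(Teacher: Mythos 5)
Your argument is correct and is essentially the proof that the paper delegates to the cited source \cite[proof of Theorem~1.4]{Ga23}: reduce to multihomogeneous $\psi$ of degree $\bb{u}$, use the ideal property to push $\psi$ into degree $\bb{v}$, and conclude from the nondegeneracy of the pairing $S_{\bb{v}-\bb{u}}\times T_{\bb{v}-\bb{u}}\to\CC$ coming from $T$ being the graded dual of $S$. Your explicit handling of the case $\bb{u}\not\leq\bb{v}$ via $T_{\bb{v}-\bb{u}}=0$ is the one small point of care, and you got it right.
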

	
	\begin{definition}[{\bf Border rank}]\label{def:brk}
		For a point $[F]\in \PP(T_{\bb{v}})$, the {\it border rank} of $F$ is the minimal integer $r\geq 1$ such that $[F]\in \sigma_r(X)$, the $r$-th secant variety of $X\subset \PP(T_{\bb{v}})$. The border rank of $F$ is denoted  $\underline{{\bf rk}}(F)$.  
	\end{definition}
	
	\begin{definition}[{\bf Smoothable rank}]\label{def:smoothrk}
		The {\it smoothable rank} of $[F]\in \PP(T_{\bb{v}})$ with respect to $X$ is the minimal integer $r\geq 1$ such that there exists a finite scheme $Z\subset X$ of length $r$ which is {\it smoothable} and $[F]\in \langle Z\rangle$. Equivalently, there exists a finite smoothable scheme $Z\subset X$ of length $r$ whose $B_X$-saturated ideal satisfies $I_Z\subset \mathrm{Ann}(F)\subset S$. The smoothable rank of $F$ is denoted  $\mathrm{srk}(F)$. 
	\end{definition}

	Smoothable and border ranks satisfy $\underline{{\bf rk}}(F)\leq \mathrm{srk}(F)$. Equality holds
	for general points of any secant variety.
	
	\begin{definition}[{\bf Wildness}]\label{def:wild}
		An element $F\in T_{\bb{v}}$ is {\it wild} if $\mathrm{srk}(F) > \brk(F)$. When equality holds, we say that $F$ is {\it tame}. 
	\end{definition}
	\noindent Wildness was introduced in \cite{bb15},
	studied for forms in \cite{HMV} and for tensors in \cite{JLP24,MV23}.

	Given a numerical function $h: \ZZ^d\rightarrow \NN$, let $\mathrm{Hilb}^h_S$ be the scheme 
	whose closed points are the homogeneous ideals $I\subset S$ with $\mathrm{HF}(S/I,\cdot) = h$. This is the {\it Haiman-Sturmfels multigraded Hilbert scheme} \cite{hs}. 
	Since $S$ is positively graded, $\mathrm{Hilb}^h_S$ is a projective scheme for any Hilbert function $h$. 
	We ignore the scheme structure of $\mathrm{Hilb}^h_S$ and look at the underlying reduced scheme $(\mathrm{Hilb}^h_S)_{\mathrm{red}}$. 
	A closed point of $\mathrm{Hilb}^h_S$ corresponds to an ideal $I\subset S$ with Hilbert function $h$: we express this membership 
	in a set-theoretic fashion as $I\in \mathrm{Hilb}^h_S$. 
	
	For any integer $r\geq 0$, define the numerical function $h_{r,X}: \ZZ^d\rightarrow \NN$ to be 
	\[
	h_{r,X}(\bb{v}) = \min\lbrace r, \dim_{\CC} S_{\bb{v}}\rbrace. 
	\]
	The function $h_{r,X}$ is the {\it generic Hilbert function of $r$ points on $X$} \cite[\S 3.2]{bb19}. By \cite[Lemma~3.9]{bb19} the equality $\mathrm{HF}(S/I_Z,\cdot) = h_{r,X}$ holds for a very general collection of $r$ points $Z\subset X$. 
	
	Buczy\'nska and Buczy\'nski proved that, in the scheme $\mathrm{Hilb}^{h_{r,X}}_S$, the closure of the locus of all $I\in \mathrm{Hilb}^{h_{r,X}}_S$ that are the saturated ideals of $r$ distinct points in $X$ is an irreducible component \cite[Proposition 3.13]{bb19}, called  $\mathrm{Slip}_{r,X}$. Here the saturation is taken with respect to the irrelevant homogeneous ideal $B_X$ of $S$. We are now ready to state the border apolarity theorem \cite[Theorem 3.15]{bb19}.
	
	\begin{theorem}[{\bf Border apolarity}]\label{mainbb}
		For any $F\in T_{{\bf v}}$, the following assertions are equivalent: 
		\begin{enumerate}
			
			\item[(i)] The border rank of $F$ with respect to $X$ satisfies $\brk(F)\leq r$;
			
			\item[(ii)] there exists a homogeneous ideal $I\subset \Ann(F)\subset S$ which lies in the irreducible component $\mathrm{Slip}_{r,X}$. 
		\end{enumerate}
	\end{theorem}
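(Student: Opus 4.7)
The plan is to prove the equivalence by transferring between two geometric pictures via the apolarity pairing: membership of $[F]$ in the $r$-th secant variety on one side, and containment of ideals in the multigraded Hilbert scheme on the other. The bridge I will use in both directions is the following duality, which I would establish first: for a finite reduced scheme $Z \subset X$ of length $r$ with saturated ideal $I_Z \subset S$, and any $F \in T_{\bb v}$, the point $[F]$ lies in the linear span $\langle Z\rangle \subset \PP(T_{\bb v})$ if and only if $I_{Z,\bb v} \subset \Ann(F)_{\bb v}$, where orthogonality is with respect to the apolarity pairing $S_{\bb v} \otimes T_{\bb v} \to \CC$. This reduces a secant-variety condition to a linear-algebraic one on a single graded piece.

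For (ii) $\Rightarrow$ (i), I would start with $I \in \mathrm{Slip}_{r,X}$ satisfying $I \subset \Ann(F)$. By the very definition of $\mathrm{Slip}_{r,X}$, there is a one-parameter family $I_t$ (over a small disk) of saturated ideals of $r$ distinct points $Z_t \subset X$ specializing to $I$ at $t=0$. In degree $\bb v$ the subspaces $I_{t,\bb v} \subset S_{\bb v}$ move in a Grassmannian and converge to $I_{\bb v}$, so their perpendiculars $I_{t,\bb v}^{\perp} \subset T_{\bb v}$ converge to $I_{\bb v}^{\perp}$. Since $F \in I_{\bb v}^{\perp}$, after possibly shrinking the disk I would lift $F$ to a section $F_t$ of this moving perpendicular with $F_0 = F$. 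For $t \neq 0$, the duality gives $[F_t] \in \langle Z_t\rangle$, so $F_t$ has rank at most $r$; closedness of $\sigma_r(X)$ then yields $[F] \in \sigma_r(X)$.

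For (i) $\Rightarrow$ (ii), I would choose a one-parameter family $F_t \in T_{\bb v}$ with $F_0 = F$ and, for generic $t$, $F_t = \sum_{i=1}^{r} v_{i,t}$ a sum of $r$ rank-one tensors whose supports $Z_t = \{[v_{i,t}]\}$ are in very general position. For very general $t$ one then has $\HF(S/I_{Z_t},\cdot) = h_{r,X}$, so $I_{Z_t}$ defines a point in $\Hilb^{h_{r,X}}_S$. Invoking properness of the Haiman-Sturmfels multigraded Hilbert scheme, I would pass to a limit ideal $I_0$, which lies in $\mathrm{Slip}_{r,X}$ by construction. The inclusion $I_{Z_t,\bb w} \cdot F_t = 0$ in each multidegree $\bb w$ is a closed condition on the product of a Grassmannian with $T_{\bb v}$, so it passes to the limit to give $I_0 \subset \Ann(F)$.

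The main obstacle I anticipate is in the second direction: the Hilbert-scheme limit of saturated ideals need not itself be saturated, which forces me to separate the \emph{geometric} role of saturation (controlling the underlying zero-dimensional scheme) from the \emph{algebraic} role of the Hilbert function (controlling the graded dimensions). The argument survives precisely because in each fixed multidegree $\bb w$ the dimension of $I_{Z_t,\bb w}$ is constant (equal to $\dim S_{\bb w} - h_{r,X}(\bb w)$), so a Grassmannian limit exists, while $\psi \circ F = 0$ is closed in $(\psi, F)$. Once this degree-by-degree bookkeeping is in place the two implications become short; the essential conceptual move is choosing the right ambient moduli space, namely the multigraded Hilbert scheme of all homogeneous ideals with Hilbert function $h_{r,X}$, rather than working only with saturated ideals.
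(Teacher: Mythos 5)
This theorem is not proved in the paper at all: it is quoted from Buczy\'nska--Buczy\'nski \cite[Theorem~3.15]{bb19}, so there is no internal proof to compare against. Your outline essentially reconstructs the argument of that source, and most of it is sound. The duality you set up is the multigraded apolarity lemma, and your use of the fact that containment in $\Ann(F)$ is detected in the single degree $\bb{v}$ (Proposition~\ref{prop:inclusion in onedeg}) is exactly what makes the limit bookkeeping work in one Grassmannian. The direction (ii)~$\Rightarrow$~(i) is correct as written: take a curve in the irreducible variety $\mathrm{Slip}_{r,X}$ through $I$ meeting the dense locus of radical $B_X$-saturated ideals, use continuity of $W\mapsto W^{\perp}$ on Grassmannians, the identification $\langle Z_t\rangle=\PP\bigl(I_{t,\bb{v}}^{\perp}\bigr)$ for saturated ideals of reduced points, and closedness of $\sigma_r(X)$.

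The one step that is a genuine gap as stated is in (i)~$\Rightarrow$~(ii): you assert that the family $F_t$ can be chosen so that for generic $t$ the support $Z_t$ is in \emph{very general} position, i.e.\ satisfies $\HF(S/I_{Z_t},\cdot)=h_{r,X}$. That condition is a countable intersection of open dense conditions on $X^{\times r}$ (one for each multidegree), and a curve that is forced to pass through a prescribed point of the abstract secant variety over $[F]$ could a priori lie inside one of the countably many bad closed loci; if it does, $I_{Z_t}$ is not even a point of $\Hilb^{h_{r,X}}_S$ and there is nothing to take a limit of. Over $\CC$ this is repairable (choose a very general curve through the point, noting that each bad locus pulls back to a proper closed subset of the irreducible abstract secant variety), but the cleaner route --- the one taken in \cite{bb19} --- avoids curves entirely: the incidence locus $\{(I,[G])\in\mathrm{Slip}_{r,X}\times\PP(T_{\bb{v}}) : I_{\bb{v}}\subseteq\Ann(G)_{\bb{v}}\}$ is closed, its image in $\PP(T_{\bb{v}})$ is closed because $\mathrm{Slip}_{r,X}$ is projective, and that image contains $\langle Z\rangle$ for every very general $r$-tuple $Z$, whose union is dense in $\sigma_r(X)$; hence the image contains all of $\sigma_r(X)$. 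Either fix completes your argument; without one of them the passage ``for very general $t$'' is unjustified.
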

	
	This motivates the definition, originally introduced in \cite{bb19},  of the main tool of this article. 
	
	\begin{definition}[{\bf Varieties of sums of powers}]\label{defin:vspb}
		For $F\in T_{{\bf v}}$, its $r$-th {\it variety of sums of powers} with respect to $X$ is the closed locus
		\[
		\VSPb(F,r) = \left\lbrace J\in \mathrm{Slip}_{r,X} \mbox{ such that } J\subset \mathrm{Ann}(F)\right\rbrace.
		\]
	\end{definition}

	Let $V =\CC[\beta_j\mid 1\leq j\leq n]$ be the homogeneous coordinate ring of $Y =\PP^{n-1}$ with standard grading and let $P=\CC[y_j\mid 1\leq j\leq n]$ be its graded dual. Theorem \ref{mainbb} applies to (and was primarily motivated by) tensors $F\in T_{{\bf 1}}\cong (\CC^n)^{\otimes d}$. An $F\in (\CC^n)^{\otimes d}$ is a symmetric $d$-tensor whenever 
	$\mathfrak S_d\cdot F=F$, where $\mathfrak S_d$ acts naturally on $(\CC^n)^{\otimes d}$ by permuting  factors. Since we work in characteristic zero, the polynomial $p_F\in P_d$ corresponding to the symmetric tensor $F$ is the image of $F$ under the equivariant projection map composed with the inverse of the polarization isomorphism, $\rho_d: (\CC^n)^{\otimes d}\longrightarrow S^d \CC^n\cong P_d$. Their apolar ideals are inside the polynomial rings $S$ and $V$ respectively, i.e., $\Ann(F)\subset S$ and $\Ann(p_F)\subset V$.

	\begin{definition}[{\bf Symmetric border and smoothable ranks}]
		For a symmetric tensor $F\in (\CC^n)^{\otimes d}$, its {\it border rank} (resp. {\it smoothable rank}) is the one with respect to the Segre variety $X$. Its {\it symmetric  border rank} (resp. {\it symmetric smoothable rank}) is the one with respect to the Veronese variety $Y$. The latter ones are denoted $\brk_S(p_F)$ and $\mathrm{srk}_S(p_F)$ respectively, where $p_F$ is the homogeneous polynomial of degree $d$ corresponding to $F$. 
	\end{definition}

	\begin{definition}[{\bf Conciseness}]
		A symmetric tensor $F\in (\CC^n)^{\otimes d}$ is {\it concise}
		when the induced linear map $\widetilde{F}:(\CC^{n})^{*}\longrightarrow (\CC^n)^{\otimes (d-1)}$
		is injective. A concise tensor $F\in (\CC^n)^{\otimes d}$ satisfies $\brk(F)\geq n$ and, when equality holds, $F$ is said to be of {\it minimal border rank}. 
	\end{definition}
	
	\subsection{Notation}
	Throughout the paper $F$ is a concise symmetric tensor in $(\CC^n)^{\otimes d}$ and $p_F$ is the corresponding degree $d$ homogeneous polynomial. Even though our main results concern the case where $F$ has minimal border rank, unless we explicitly state this assumption we consider the general situation. Recall that $X = (\PP^{n-1})^d$ has homogeneous coordinate ring $S = \CC[\alpha_{i,j}\mid 1\leq i\leq d$, $1\leq j\leq n]$ and $T=\CC[x_{i,j}\mid 1\leq i\leq d$, $1\leq j\leq n]$ is its graded dual ring. Moreover, $Y  =\PP^{n-1}$ has homogeneous coordinate ring $V =\CC[\beta_j\mid 1\leq j\leq n]$ and graded dual ring $P=\CC[y_j\mid 1\leq j\leq n]$.

	\section{The diagonal morphism}\label{sec:pi}

	We define the following ring map between $S$ and $V$. 
	
	\begin{definition}[{\bf Map} $\pi$]
		Let $\pi\colon S\longrightarrow V$ be given by $\alpha_{i,j}\mapsto \beta_j$ for all $i,j$.
	\end{definition}
	
	Geometrically, it corresponds to the diagonal embedding of affine spaces $\mathbb{A}^{n} \to (\mathbb{A}^n)^{\times d}$.

	\begin{lemma}\label{lem:number_of_sequences}
		Let $r$ and $n$ be positive integers. There are exactly $\binom{n+r-1}{r}$ non-decreasing integer sequences $1\leq a_1 \leq a_2 \leq \cdots \leq a_r \leq n$.
	\end{lemma}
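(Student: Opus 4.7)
The plan is to prove this by a standard bijective argument (the ``stars and bars'' / shift trick) that reduces counting weakly increasing sequences to counting strictly increasing ones, which are in turn in bijection with $r$-subsets of a ground set of size $n+r-1$.

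First I would define the map that sends a non-decreasing sequence $(a_1,\ldots,a_r)$ with $1\leq a_1\leq\cdots\leq a_r\leq n$ to the sequence $(b_1,\ldots,b_r)$ defined by $b_i = a_i + (i-1)$. A direct check shows that if $a_{i}\leq a_{i+1}$ then $b_i = a_i + (i-1) < a_{i+1} + i = b_{i+1}$, so the resulting sequence is strictly increasing. Moreover $1\leq b_1 = a_1$ and $b_r = a_r + (r-1) \leq n+r-1$, so the image lies in the set of strictly increasing sequences $1\leq b_1 < b_2 < \cdots < b_r \leq n+r-1$.

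Next I would check that this map is a bijection, with inverse $a_i = b_i - (i-1)$; one verifies that whenever $b_i < b_{i+1}$ we have $b_i - (i-1) \leq b_{i+1} - i$, so the inverse image is non-decreasing, and the range condition is preserved by the analogous inequalities. These two constructions are mutually inverse, so the two sets are in bijection.

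Finally, strictly increasing sequences $1\leq b_1 < \cdots < b_r \leq n+r-1$ correspond bijectively to $r$-element subsets $\{b_1,\ldots,b_r\}$ of $\{1,\ldots,n+r-1\}$, of which there are exactly $\binom{n+r-1}{r}$, completing the proof.

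I do not foresee any real obstacle: the only point requiring care is to verify both directions of the bijection between weakly and strictly increasing sequences, which is a short inequality check.
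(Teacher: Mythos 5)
Your proof is correct and uses exactly the same bijection as the paper: the shift $b_i = a_i + (i-1)$ to strictly increasing sequences, which are then counted as $r$-subsets of $\{1,\ldots,n+r-1\}$. No issues.
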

	\begin{proof}
		Let $b_i = a_i + i-1$. The sequences $1\leq a_1 \leq a_2 \leq \cdots \leq a_r \leq n$ are in bijection with increasing sequences $1\leq b_1 < b_2 < \cdots < b_r \leq n + r-1$. The number of such sequences is $\binom{n+r-1}{r}$ corresponding to the choice of $r$ values that are taken by $b_1,b_2,\ldots, b_r$.
	\end{proof}

	\begin{lemma}\label{lem:image_of_degree_111}
		We have the inclusion $\pi(\Ann(F)_{\bf 1}) \subseteq \Ann(p_F)_d\subset V$.
		\begin{proof}
			Let $\boldsymbol{\gamma}=(\gamma_1,\ldots,\gamma_n)\in \ZZ_{\geq 0}^n$
			with $|\boldsymbol{\gamma}|=\sum_i \gamma_i = d$, and let $\boldsymbol{y}^{\boldsymbol{\gamma}}=y_1^{\gamma_1}\cdots y_n^{\gamma_n}\in P_d$ be a monomial of degree $|\boldsymbol{\gamma}|=d$. Furthermore, denote $\boldsymbol{\gamma}!=\gamma_1!\cdots \gamma_n!$. In the tensor space $(\CC^n)^d$ we fix a basis $x_{1,i_1}\otimes \cdots \otimes x_{d,i_d}$, where $1\leq i_k\leq n$. 
			Let $p_F = \sum_{\boldsymbol{\gamma}\colon |\boldsymbol{\gamma}|=d} \frac{d!}{\boldsymbol{\gamma}!} c^{\boldsymbol{\gamma}} {\boldsymbol{y}^{\boldsymbol{\gamma}}}
			\in P_d$, where $c^{\boldsymbol{\gamma}}\in \CC$. Then the corresponding tensor is 
			\[
			F = \sum_{\boldsymbol{\gamma}\colon |\boldsymbol{\gamma}|=d}c^{\boldsymbol{\gamma}} \sum_{(i_1,\ldots, i_d)\in I_\gamma}x_{1,i_1}\otimes \cdots \otimes x_{d,i_d} \in (\CC^n)^{\otimes d},
			\]
			where $I_\gamma = \{(i_1, \ldots, i_d) \in \{1,\ldots, n\}^d\mid \#\{j \mid i_j = k\} = \gamma_k \text{ for all } 1\leq k \leq n\}$.
			Let $\theta = \sum_{\boldsymbol{\gamma}\colon |\boldsymbol{\gamma}| = d} \sum_{(i_1,\ldots, i_d) \in I_\gamma} A_{\boldsymbol{\gamma}, (i_1,\ldots, i_d)} \alpha_{1,i_1}\cdots \alpha_{d, i_d}
			\in \Ann(F)\subset S$. 
			Thus 
			\[
			\theta\circ F = \sum_{\boldsymbol{\gamma}\colon |\gamma| = d} \sum_{(i_1,\ldots, i_d)\in I_\gamma} c^{\boldsymbol{\gamma}}A_{\boldsymbol{\gamma}, (i_1,\ldots,i_d)} = 0.
			\]

			On the other hand, $\pi(\theta) = \sum_{\boldsymbol{\gamma}\colon |\gamma| = d} \sum_{(i_1,\ldots, i_d)\in I_\gamma} A_{\boldsymbol{\gamma}, (i_1,\ldots,i_d)} {\boldsymbol{\beta}}^{\boldsymbol{\gamma}}\in V$.
			Now 
			\[
			\pi(\theta)\circ p_F =  \sum_{\boldsymbol{\gamma}\colon |\gamma| = d}\sum_{(i_1,\ldots, i_d)\in I_\gamma}  d!c^{\boldsymbol{\gamma}}A_{\boldsymbol{\gamma}, (i_1,\ldots,i_d)} = 0
			\]
			and the conclusion follows. 
		\end{proof}
	\end{lemma}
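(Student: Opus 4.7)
The plan is to unpack both sides of the putative inclusion by choosing coordinates and then compare the two pairings $\theta \circ F$ and $\pi(\theta) \circ p_F$ directly. The only subtle ingredient is the combinatorial bookkeeping that comes from the isomorphism $\mathfrak{S}_d$-invariants in $(\mathbb{C}^n)^{\otimes d}$ with $S^d\mathbb{C}^n \cong P_d$: I need to make sure the multinomial factor appearing in the polarization of $p_F$ cancels cleanly against the factor produced by contracting $\beta^{\boldsymbol{\gamma}}$ with $y^{\boldsymbol{\gamma}}$.

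First, I would fix the monomial basis $x_{1,i_1}\otimes\cdots\otimes x_{d,i_d}$ of $(\mathbb{C}^n)^{\otimes d}$ and, for each composition $\boldsymbol{\gamma} \in \mathbb{Z}_{\geq 0}^n$ with $|\boldsymbol{\gamma}| = d$, introduce the set $I_{\boldsymbol{\gamma}} = \{(i_1,\ldots,i_d) : \#\{j : i_j = k\} = \gamma_k\}$ of cardinality $d!/\boldsymbol{\gamma}!$. Because $F$ is symmetric, its coefficient on $x_{1,i_1}\otimes\cdots\otimes x_{d,i_d}$ depends only on the multiset of indices, hence only on $\boldsymbol{\gamma}$; call this coefficient $c^{\boldsymbol{\gamma}}$. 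Under the polarization identification sending symmetric tensors to polynomials, one then reads off $p_F = \sum_{|\boldsymbol{\gamma}| = d} \tfrac{d!}{\boldsymbol{\gamma}!}\,c^{\boldsymbol{\gamma}}\, y^{\boldsymbol{\gamma}}$, with the multinomial factor coming from summing over $I_{\boldsymbol{\gamma}}$.

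Next I would take an arbitrary $\theta \in \mathrm{Ann}(F)_{\mathbf{1}}$, expand it as $\theta = \sum A_{i_1,\ldots,i_d}\,\alpha_{1,i_1}\cdots\alpha_{d,i_d}$, and record that the apolarity condition $\theta \circ F = 0$ reads
\[
\sum_{\boldsymbol{\gamma}} c^{\boldsymbol{\gamma}} \sum_{(i_1,\ldots,i_d)\in I_{\boldsymbol{\gamma}}} A_{i_1,\ldots,i_d} \;=\; 0.
\]
Then I would apply $\pi$ and compute: $\pi(\theta) = \sum_{\boldsymbol{\gamma}} \bigl(\sum_{(i_1,\ldots,i_d)\in I_{\boldsymbol{\gamma}}} A_{i_1,\ldots,i_d}\bigr)\,\beta^{\boldsymbol{\gamma}}$. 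Contracting term by term against $p_F$ and using $\beta^{\boldsymbol{\gamma}} \circ y^{\boldsymbol{\delta}} = \boldsymbol{\gamma}!\,\delta_{\boldsymbol{\gamma},\boldsymbol{\delta}}$ (up to the convention fixed on $P$), the $\boldsymbol{\gamma}!$'s cancel the $\tfrac{d!}{\boldsymbol{\gamma}!}$'s and one is left with $d!$ times the same double sum displayed above, which is zero.

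The only genuine obstacle is making sure the normalization conventions on the duality pairing $S \times T \to \mathbb{C}$ and $V \times P \to \mathbb{C}$ are compatible with the convention chosen for the polarization $\rho_d$: different authors build the factor $\tfrac{d!}{\boldsymbol{\gamma}!}$ into different places. Once I fix one consistent convention (the one adopted in the paper, where $p_F$ already carries the factor $\tfrac{d!}{\boldsymbol{\gamma}!}$), the cancellation is automatic and the displayed sum identifies $\pi(\theta)\circ p_F$ with $d!\cdot(\theta\circ F)$, giving the inclusion.
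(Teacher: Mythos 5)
Your proposal is correct and follows essentially the same route as the paper: expand $F$, $p_F$, and $\theta$ in the monomial bases indexed by compositions $\boldsymbol{\gamma}$, record the apolarity condition $\theta\circ F=0$ as a double sum, and observe that the $\boldsymbol{\gamma}!$ from the pairing $\beta^{\boldsymbol{\gamma}}\circ y^{\boldsymbol{\gamma}}$ cancels the $d!/\boldsymbol{\gamma}!$ in $p_F$ so that $\pi(\theta)\circ p_F = d!\,(\theta\circ F)=0$. No gaps; this is exactly the computation in the paper.
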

	
	\begin{definition}[{\bf Ideal $I_R$}]\label{def:I and I_R}
		Define
		\[
		I_R = (\alpha_{i,j}\alpha_{k,\ell}-\alpha_{i,\ell}\alpha_{k,j} \mid 1\leq i < k \leq d, 1\leq j<\ell\leq n)\subset S.
		\]
		
		As we verify computationally below, this is the ideal of the diagonal in $(\mathbb{A}^{n})^{\times d}$. Let $\mathcal A\subset \lbrace 0,1\rbrace^{\times d}$ be the subset of the $d$-tuples with only two $1$'s in their nonzero entries. Note that 
		$I_R\subseteq (\bigoplus_{{\bf v}\in \mathcal A}\Ann(F)_{{\bf v}})\subseteq \Ann(F)\subset S$. 
	\end{definition}
	
	\begin{lemma}\label{lem:ker_of_pi}
		For every $\mathbf{u}\in \ZZ^d$ we have $\ker \pi|_{S_\mathbf{u}} = (I_R)_{\mathbf{u}}$. Furthermore, if $J$ is a homogeneous ideal such that $J_{\mathbf{u}}$ contains $(I_R)_\mathbf{u}$ for some $\mathbf{u}\in \ZZ^d$, then 
		\[
		\dim_\CC V_{|\mathbf{u}|} / (\pi|_{S_\mathbf{u}}(J_\mathbf{u})) = \dim_\CC (S/J)_\mathbf{u}.
		\]
	\end{lemma}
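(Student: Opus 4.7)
The plan is to establish both statements from a single core fact: the restriction $\pi|_{S_\mathbf{u}}$ is surjective onto $V_{|\mathbf{u}|}$ with kernel exactly $(I_R)_\mathbf{u}$. Given this, part (2) becomes a one-line dimension count. Surjectivity is straightforward: a degree-$|\mathbf{u}|$ monomial $\beta_{j_1}\cdots \beta_{j_{|\mathbf{u}|}}\in V_{|\mathbf{u}|}$ lifts to $S_\mathbf{u}$ by distributing its factors into the $d$ blocks according to the sizes $u_1,\ldots,u_d$, e.g.\ $\alpha_{1,j_1}\cdots\alpha_{1,j_{u_1}}\alpha_{2,j_{u_1+1}}\cdots\alpha_{d,j_{|\mathbf{u}|}}$. (If some $u_i<0$, then $S_\mathbf{u}=0$ and both sides of the claim vanish, so I may assume $\mathbf{u}\in\ZZ_{\geq 0}^d$.) The inclusion $(I_R)_\mathbf{u}\subseteq \ker\pi|_{S_\mathbf{u}}$ is immediate from $\pi(\alpha_{i,j}\alpha_{k,\ell})=\beta_j\beta_\ell=\pi(\alpha_{i,\ell}\alpha_{k,j})$.

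The substantive step is the reverse inclusion. I parametrize monomials of $S_\mathbf{u}$ by $d\times n$ nonnegative integer matrices $A=(a_{i,j})$ with $i$-th row sum $u_i$, writing $\alpha^A=\prod_{i,j}\alpha_{i,j}^{a_{i,j}}$. The key observation is that $\pi(\alpha^A)=\beta_1^{c_1}\cdots\beta_n^{c_n}$ depends only on the column sum vector $(c_j)=(\sum_i a_{i,j})$ of $A$. Hence, grouping monomials by their column-sum vector, a general element of $\ker\pi|_{S_\mathbf{u}}$ decomposes as a linear combination of differences $\alpha^A-\alpha^B$ in which $A,B$ share both row and column sums.

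To place each such difference in $I_R$, I would invoke the classical fact that two nonnegative integer matrices with prescribed row and column sums are connected by elementary \emph{swap moves}: pick $i\neq k$ and $j\neq\ell$ with $a_{i,j},a_{k,\ell}>0$, and replace $A$ by the matrix obtained by decreasing entries $(i,j),(k,\ell)$ by $1$ and increasing entries $(i,\ell),(k,j)$ by $1$. A single such move transforms $\alpha^A$ into $\alpha^{A'}$ by substituting one occurrence of $\alpha_{i,j}\alpha_{k,\ell}$ with $\alpha_{i,\ell}\alpha_{k,j}$ inside the monomial, which exhibits $\alpha^A-\alpha^{A'}\in I_R$; iterating over a sequence of swaps connecting $A$ to $B$ gives $\alpha^A-\alpha^B\in I_R$. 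The connectivity claim itself follows by a standard induction: consider the lexicographically first $(i,j)$ at which $A$ and $B$ differ; the row- and column-sum constraints force the existence of indices $\ell>j$ and $k>i$ where a swap moves $A$ strictly closer to $B$. I expect this combinatorial bookkeeping to be the main (though routine) obstacle, but no essential difficulty arises because positivity of the entries used in each swap is guaranteed by the same constraints.

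Finally, for part (2): since $(I_R)_\mathbf{u}=\ker\pi|_{S_\mathbf{u}}\subseteq J_\mathbf{u}$, rank-nullity gives $\dim_\CC \pi(J_\mathbf{u})=\dim_\CC J_\mathbf{u}-\dim_\CC(I_R)_\mathbf{u}$, and surjectivity yields $\dim_\CC V_{|\mathbf{u}|}=\dim_\CC S_\mathbf{u}-\dim_\CC(I_R)_\mathbf{u}$; subtracting the two equations produces $\dim_\CC V_{|\mathbf{u}|}/\pi(J_\mathbf{u})=\dim_\CC S_\mathbf{u}-\dim_\CC J_\mathbf{u}=\dim_\CC(S/J)_\mathbf{u}$, as required.
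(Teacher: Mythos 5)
Your proposal is correct and follows essentially the same route as the paper: both arguments rest on the combinatorial fact that two monomials of $S_{\mathbf{u}}$ with the same image under $\pi$ are connected by the quadratic relations generating $I_R$ (the paper phrases this as reduction to a sorted normal form, you as swap-connectivity of nonnegative integer matrices with fixed margins), together with the observation that $\pi$ separates the resulting classes. The only cosmetic difference is in the dimension count: the paper explicitly counts nondecreasing sequences to get $\dim_\CC (S/I_R)_{\mathbf{u}} = \dim_\CC V_{|\mathbf{u}|}$, whereas you obtain the same identity from surjectivity of $\pi|_{S_{\mathbf{u}}}$ and rank--nullity.
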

	\begin{proof}
		By definition, $(I_R)_{\mathbf{u}} \subseteq \ker \pi|_{S_\mathbf{u}}$. Let $\theta \in S_\mathbf{u}$. We show that if $\pi(\theta) = 0$, then $\theta\in I_R$. By adding elements from $(I_R)_{\mathbf{u}}$ to $\theta$ we may assume that
		\begin{equation}\label{eq:expression_theta}
			\theta = \sum_{\substack{k_{1,1} \leq \cdots \leq k_{1,u_1}\leq \\ 
					\leq k_{2,1} \leq \cdots \leq k_{2,u_2}\leq \\ 
					\vdots \\ 
					\leq k_{d,1} \leq \cdots \leq k_{d,u_d}}} 
			\Gamma_{k_{1,1}\cdots k_{d,u_d}} \alpha_{1,k_{1,1}} \cdots \alpha_{1,k_{1,u_1}} \alpha_{2,k_{2,1}} \cdots \alpha_{2,k_{2,u_2}} \cdots \alpha_{d,k_{d,1}} \cdots \alpha_{d,k_{d,u_d}}
		\end{equation}
		for some $\Gamma_{k_{1,1}\cdots k_{d,u_d}}\in \CC$. If $\theta\in \ker \pi|_{S_\mathbf{u}}$, then
		\[
		0 = \pi(\theta) =\sum_{\substack{k_{1,1} \leq \cdots \leq k_{1,u_1}\leq \\ 
				\leq k_{2,1} \leq \cdots \leq k_{2,u_2}\leq \\ 
				\vdots \\ 
				\leq k_{d,1} \leq \cdots \leq k_{d,u_d}}} \Gamma_{k_{1,1}\cdots k_{d,u_d}}
		\beta_{k_{1,1}}\cdots \beta_{k_{d,u_d}}
		\]
		and hence $\Gamma_{k_{1,1}\cdots k_{d,u_d}}$
		are all zero. As a result $\theta = 0$ which proves the first statement of the lemma. One has the equalities 
		\[
		\dim_\CC V_{|\mathbf{u}|} / (\pi|_{S_\mathbf{u}}(J_\mathbf{u})) = \dim_\CC V_{|\mathbf{u}|} - (\dim_\CC J_\mathbf{u} - \dim_\CC (I_R)_\mathbf{u}) = 
		\]
		\[
		=\dim_\CC V_{|\mathbf{u}|} - \dim_\CC (S/I_R)_\mathbf{u}+ \dim_\CC (S/J)_\mathbf{u}.
		\]
		Then, it is sufficient to show that $\dim_\CC (S/(I_R))_\mathbf{u} = \dim_\CC V_{|\mathbf{u}|}$. Observe that in expression \eqref{eq:expression_theta} of $\theta$ modulo $(I_R)_{\mathbf{u}}$ the coefficients $\Gamma_{k_{1,1}\cdots k_{d,u_d}}$ are uniquely determined. So the quotient space $S_{\mathbf{u}}/(I_R)_{\mathbf{u}}$ has a $\CC$-basis whose elements are in bijection with nondecreasing integer sequences $1\leq c_1\leq \cdots \leq c_{|\mathbf{u}|} \leq n$. By Lemma~\ref{lem:number_of_sequences} there are $\binom{n+|\mathbf{u}|-1}{|\mathbf{u}|}$ 
		such sequences, which is the dimension of $V_{|\mathbf{u}|}$. Hence $\dim_\CC V_{|\mathbf{u}|} = \dim_\CC (S/I_R)_\mathbf{u}$.
	\end{proof}

	\begin{lemma}\label{lem:image_is_equal}
		$\pi(\Ann(F)_{{\bf 1}}) = \Ann(p_F)_d$.
	\end{lemma}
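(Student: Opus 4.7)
The plan is to upgrade the one-sided containment from Lemma~\ref{lem:image_of_degree_111} to an equality by comparing dimensions of domain, image and kernel of $\pi$ restricted to $\Ann(F)_{\mathbf{1}}$.

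First I would compute $\dim_\CC \Ann(F)_{\mathbf{1}}$ and $\dim_\CC \Ann(p_F)_d$ using the standard duality pairings. The monomial basis $\{\alpha_{1,i_1}\cdots\alpha_{d,i_d}\}$ of $S_{\mathbf{1}}$ is dual to the basis $\{x_{1,i_1}\otimes\cdots\otimes x_{d,i_d}\}$ of $T_{\mathbf{1}}$, so contraction with $F$ is a linear functional $S_{\mathbf{1}}\to T_{\mathbf{0}}=\CC$; it is nonzero because $F\neq 0$, hence $\dim_\CC \Ann(F)_{\mathbf{1}} = n^d - 1$. Analogously, since $p_F\neq 0$, contraction with $p_F$ gives a nonzero linear functional $V_d\to P_0=\CC$ and $\dim_\CC \Ann(p_F)_d = \binom{n+d-1}{d}-1$.

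Next I would compute $\dim_\CC(I_R)_{\mathbf{1}}$. Applying Lemma~\ref{lem:ker_of_pi} with $\mathbf{u}=\mathbf{1}$ gives $\ker\pi|_{S_{\mathbf{1}}}=(I_R)_{\mathbf{1}}$. Every monomial $\beta^{\boldsymbol{\gamma}}\in V_d$ is $\pi$ of some monomial in $S_{\mathbf{1}}$, so $\pi|_{S_{\mathbf{1}}}$ is surjective onto $V_d$ and therefore
\[
\dim_\CC (I_R)_{\mathbf{1}} = n^d - \binom{n+d-1}{d}.
\]
The inclusion $(I_R)_{\mathbf{1}}\subseteq \Ann(F)_{\mathbf{1}}$ holds by the observation in Definition~\ref{def:I and I_R} that $I_R\subseteq \Ann(F)$ as ideals (since $\Ann(F)$ is an ideal and the degree-$\mathbf{v}$ generators for $\mathbf{v}\in\mathcal A$ lie in $\Ann(F)_{\mathbf{v}}$). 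Combining these facts,
\[
\dim_\CC \pi(\Ann(F)_{\mathbf{1}}) = \dim_\CC \Ann(F)_{\mathbf{1}} - \dim_\CC (I_R)_{\mathbf{1}} = \binom{n+d-1}{d} - 1 = \dim_\CC \Ann(p_F)_d,
\]
and together with the containment of Lemma~\ref{lem:image_of_degree_111} this forces the asserted equality.

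The argument is essentially a bookkeeping exercise once the correct dimensions are in hand; the only delicate input is the identification $\ker\pi|_{S_{\mathbf{1}}} = (I_R)_{\mathbf{1}}$, which is exactly Lemma~\ref{lem:ker_of_pi} and is the place where the specific nature of the ideal $I_R$ (as a degree-wise full kernel of the diagonal map) is used. I do not anticipate a genuine obstacle beyond this.
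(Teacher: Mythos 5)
Your proof is correct and follows essentially the same route as the paper: both upgrade the containment from Lemma~\ref{lem:image_of_degree_111} to an equality via Lemma~\ref{lem:ker_of_pi} and the inclusion $(I_R)_{\mathbf{1}}\subseteq\Ann(F)_{\mathbf{1}}$, reducing to a codimension-one count on both sides. The only difference is cosmetic: the paper invokes the second (packaged) statement of Lemma~\ref{lem:ker_of_pi} directly, whereas you unwind the same dimension bookkeeping by hand from its first statement.
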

	\begin{proof}
		By Lemma~\ref{lem:image_of_degree_111}, it is enough to show that $\dim_\CC V_d/\pi(\Ann(F)_{{\bf 1}}) = 1$. This follows from Lemma~\ref{lem:ker_of_pi} since $(I_R)_{{\bf 1}} \subseteq \Ann(F)_{{\bf 1}}$. 
	
	\end{proof}

	\section{A morphism between border varieties of sums of powers}\label{sec: morphism between vspbars}
	
	Now we construct a morphism $\Hilb_{V}^{h_{r,Y}} \to \Hilb_{S}^{h_{r,X}}$ and show that, given a symmetric tensor $F \in (\CC^{n})^{\otimes d}$, it maps ideals in $\VSPb(p_F, r)$ to ideals in $\VSPb(F,r)$. For every $\mathbf{u} \in \ZZ^d$ we define a $\CC$-linear injective map $\psi_\mathbf{u}\colon V_{|\mathbf{u}|} \to S_{\mathbf{u}}$ by the following formula on the monomial basis
	\[
	\beta_{i_1}\beta_{i_2}\cdots \beta_{i_{|\mathbf{u}|}} \mapsto (\alpha_{1,i_1}\cdots \alpha_{1,i_{u_1}})(\alpha_{2,i_{u_1+1}}\cdots \alpha_{2,i_{u_1+u_2}})\cdots(\alpha_{d,i_{|{\bf u}|-u_d+1}}\cdots \alpha_{d,i_{|{\bf u}|}}),
	\]
	where $i_1 \leq i_2 \leq \cdots \leq i_{|\mathbf{u}|}$. Since $\pi$ is injective on the image of $\psi_{\mathbf{u}}$, it follows from Lemma~\ref{lem:ker_of_pi} applied with $J=I_R$ that
	\begin{equation}\label{eq:_direct_sum_decomposition}
		S_{\mathbf{u}} = (I_R)_{\mathbf{u}}\oplus \psi_{\mathbf{u}}(V_{|\mathbf{u}|}).
	\end{equation}
	
	Let $\phi\colon \ZZ\to \ZZ$ be any Hilbert function. Given an ideal $I\in \Hilb_V^{\phi}$, let 
	\[
	\Upsilon(I) := I_R + \bigoplus_{\mathbf{u}\in \ZZ^d} \psi_{\mathbf{u}}(I_{|\mathbf{u}|})\subset S.
	\]
	Let $\phi'\colon \ZZ^d\to \ZZ$ be defined by $\phi'(\mathbf{u}) = \phi(|\mathbf{u}|)$. 
	
	\begin{proposition}\label{lem:morphism_well_defined}
		If $I\in \Hilb_V^{\phi}$, then $\Upsilon(I) \in \Hilb_S^{\phi'}$.
	\end{proposition}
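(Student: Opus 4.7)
The plan is to establish two things: that $\Upsilon(I)$ is a genuine homogeneous ideal of $S$, and that its multigraded Hilbert function equals $\phi'$. The Hilbert function computation is the easy part. By the direct sum decomposition~\eqref{eq:_direct_sum_decomposition} in the preceding paragraph and the injectivity of $\psi_{\mathbf{u}}$, the piece $\Upsilon(I)_{\mathbf{u}}$ is the internal direct sum $(I_R)_{\mathbf{u}} \oplus \psi_{\mathbf{u}}(I_{|\mathbf{u}|})$, whose dimension is $\dim_\CC (I_R)_{\mathbf{u}} + \dim_\CC I_{|\mathbf{u}|}$. Subtracting from $\dim_\CC S_{\mathbf{u}}$ and using $\dim_\CC (S/I_R)_{\mathbf{u}} = \dim_\CC V_{|\mathbf{u}|}$ from Lemma~\ref{lem:ker_of_pi} yields $\dim_\CC (S/\Upsilon(I))_{\mathbf{u}} = \dim_\CC V_{|\mathbf{u}|} - \dim_\CC I_{|\mathbf{u}|} = \phi(|\mathbf{u}|) = \phi'(\mathbf{u})$, as required.

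The real content is verifying that $\Upsilon(I)$ is closed under multiplication by $S$. Since $I_R$ is already an ideal, the only thing to check is that $\alpha_{i,j}\cdot \psi_{\mathbf{u}}(g) \in \Upsilon(I)$ for every $g \in I_{|\mathbf{u}|}$ and every generator $\alpha_{i,j}$ of $S$. The key observation is that the composition $\pi \circ \psi_{\mathbf{u}}\colon V_{|\mathbf{u}|} \to V_{|\mathbf{u}|}$ is the identity: this is immediate from the monomial formula defining $\psi_{\mathbf{u}}$. Consequently both $\alpha_{i,j}\cdot \psi_{\mathbf{u}}(g)$ and $\psi_{\mathbf{u}+\mathbf{e}_i}(\beta_j g)$ have the same image $\beta_j g$ under $\pi$ in $V_{|\mathbf{u}|+1}$, so their difference lies in $\ker \pi|_{S_{\mathbf{u}+\mathbf{e}_i}}$, which by Lemma~\ref{lem:ker_of_pi} is exactly $(I_R)_{\mathbf{u}+\mathbf{e}_i} \subset \Upsilon(I)$.

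Since $I \subset V$ is an ideal, we have $\beta_j g \in I_{|\mathbf{u}|+1}$, so $\psi_{\mathbf{u}+\mathbf{e}_i}(\beta_j g) \in \psi_{\mathbf{u}+\mathbf{e}_i}(I_{|\mathbf{u}|+1}) \subset \Upsilon(I)$. Adding the two contributions shows $\alpha_{i,j}\cdot \psi_{\mathbf{u}}(g) \in \Upsilon(I)$, so $\Upsilon(I)$ is indeed a homogeneous ideal. The main potential obstacle I anticipated here was ensuring that the naive definition of $\Upsilon(I)$ as a vector-space sum would close up under the $S$-action on the nose; the identity $\pi \circ \psi_{\mathbf{u}} = \mathrm{id}$ together with Lemma~\ref{lem:ker_of_pi} reduces this to a one-line calculation, and combined with the Hilbert function count above we conclude $\Upsilon(I) \in \mathrm{Hilb}^{\phi'}_S$.
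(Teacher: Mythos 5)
Your proof is correct and follows essentially the same route as the paper: the Hilbert function count via the decomposition \eqref{eq:_direct_sum_decomposition}, and the ideal property via the congruence $\alpha_{i,j}\psi_{\mathbf{u}}(g) \equiv \psi_{\mathbf{u}+\mathbf{e}_i}(\beta_j g) \bmod (I_R)_{\mathbf{u}+\mathbf{e}_i}$. The only (welcome) difference is that you justify that congruence explicitly through $\pi\circ\psi_{\mathbf{u}}=\mathrm{id}$ and Lemma~\ref{lem:ker_of_pi}, where the paper simply asserts it.
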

	\begin{proof}
		By construction $\Upsilon(I)$ is a $\ZZ^d$-graded $\CC$-vector subspace of $S$.
		We first check that it is an ideal. Since $I_R$ is an ideal it is enough to show that if $\overline{h} = \psi_{\mathbf{u}}(h)$ for some $h\in I_{|\mathbf{u}|}$, then $\alpha_{i,j} \overline{h}$  are all in $\Upsilon(I)$ for every $1\leq i\leq d$ and $1\leq j\leq n$. This follows from the fact that $\psi_{\mathbf{u}+\mathbf{e}_1}(\beta_j h) \equiv \alpha_{1,j}\overline{h} \mod (I_R)_{\mathbf{u}+\mathbf{e}_1}$ and similar equalities for other $\alpha_{i,j}$. 
		
		By \eqref{eq:_direct_sum_decomposition} we have
		\[
		\frac{S_{\mathbf{u}}}{(I_R)_\mathbf{u} + \psi_{\mathbf{u}}(I_{|\mathbf{u}|})}\cong \frac{\psi_\mathbf{u}(V_{|\mathbf{u}|})}{\psi_{\mathbf{u}}(I_{|\mathbf{u}|})} \cong \frac{V_{|\mathbf{u}|}}{I_{|\mathbf{u}|}}.
		\]
		So $S/\Upsilon(I)$ has Hilbert function $\phi'$.
	\end{proof}
	
	Based on Proposition~\ref{lem:morphism_well_defined} we make the following definition.
	
	\begin{definition}[{\bf Desymmetrizing morphism}]
		The {\it desymmetrizing morphism} is the morphism of schemes $\Upsilon\colon \Hilb_V^{\phi} \to \Hilb_S^{\phi'}$ defined on closed points by $I\mapsto \Upsilon(I)$.
	\end{definition}
	
	\begin{remark}
		It is interesting to consider the problem of functoriality of multigraded Hilbert schemes corresponding to smooth projective toric varieties.
		If $f$ is a good enough morphism between such varieties, then it induces a map of multigraded Hilbert schemes as described in \cite[Theorem~2.6]{Man23}. The morphism $f$ can be expressed in terms of a homomorphism of Cox rings \cite[Theorem~3.2]{Cox95}. One may ask  what  conditions on such a ring homomorphism guarantee that it induces a morphism of multigraded Hilbert schemes. The desymmetrizing morphism does {\it not} come from a homomorphism of Cox rings.
		What are other algebraic or geometric constructions that produce  morphisms mapping limits of radical and saturated ideals to limits of radical and saturated ideals? All these questions can also be asked in a more general setup, when we consider rational maps of varieties and we replace toric varieties by Mori dream spaces. Here for the algebraic description of the map instead of \cite{Cox95} one can use \cite{BB13} and \cite{BK18}. 
	\end{remark}
	
	We are mainly interested in the case where $\phi=h_{r,Y}$ and $\phi'=h_{r,X}$ and we eventually restrict to this case. However, we start with some general observations.
	
	\begin{proposition}\label{lem:rad_to_rad}
		If $I$ is a radical ideal, then so is $\Upsilon(I)$.
	\end{proposition}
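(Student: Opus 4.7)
The plan is to show that $\Upsilon(I)$ coincides with the preimage $\pi^{-1}(I)$, and then invoke the standard fact that the preimage of a radical ideal under a ring homomorphism is radical.

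First I would verify the identity $\Upsilon(I) = \pi^{-1}(I)$. Direct computation on the monomial basis shows that $\pi \circ \psi_{\mathbf{u}}$ is the identity on $V_{|\mathbf{u}|}$: indeed, $\psi_{\mathbf{u}}$ produces a monomial in the $\alpha_{i,j}$ whose image under $\pi$ is exactly the original monomial in the $\beta_j$. Combined with Lemma~\ref{lem:ker_of_pi}, this gives that $\pi\colon S \to V$ is surjective with kernel $I_R$. Using the direct sum decomposition \eqref{eq:_direct_sum_decomposition}, any element of $\pi^{-1}(I)_{\mathbf{u}}$ can be uniquely written as $r + \psi_{\mathbf{u}}(v)$ with $r \in (I_R)_{\mathbf{u}}$ and $v \in V_{|\mathbf{u}|}$, and applying $\pi$ forces $v \in I_{|\mathbf{u}|}$. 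Thus $\pi^{-1}(I)_{\mathbf{u}} = (I_R)_{\mathbf{u}} + \psi_{\mathbf{u}}(I_{|\mathbf{u}|}) = \Upsilon(I)_{\mathbf{u}}$ for every $\mathbf{u} \in \ZZ^d$.

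Once this identification is in place, radicality is automatic: if $f^n \in \Upsilon(I) = \pi^{-1}(I)$, then $\pi(f)^n = \pi(f^n) \in I$, and since $I$ is radical we conclude $\pi(f) \in I$, whence $f \in \pi^{-1}(I) = \Upsilon(I)$. So $\Upsilon(I)$ is radical.

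There is essentially no obstacle here: the substantive content is the identification $\Upsilon(I) = \pi^{-1}(I)$, which is already implicit in the construction of $\Upsilon$ via the splitting \eqref{eq:_direct_sum_decomposition}. The conceptual picture is that $\Upsilon$ is the scheme-theoretic operation of pulling back a closed subscheme of $\mathbb{A}^n$ along the diagonal embedding $\mathbb{A}^n \hookrightarrow (\mathbb{A}^n)^{\times d}$ in the sense of intersecting it with the diagonal, and such a pullback along a closed immersion preserves reducedness on the source.
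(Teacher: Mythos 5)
Your degree-by-degree computation is correct and is really the whole content of the proposition, but the headline identity $\Upsilon(I)=\pi^{-1}(I)$ is false as stated, and the last step of your argument silently relies on it. The ring-theoretic kernel of $\pi\colon S\to V$ is the inhomogeneous ideal of the diagonal, which contains the linear forms $\alpha_{1,j}-\alpha_{2,j}$; these lie in $\pi^{-1}(I)$ for every $I$, but they do not lie in $\Upsilon(I)$ whenever $I_1=0$ (which is the case of interest, since $I_1=0$ for every $I\in\Hilb_V^{h_{r,Y}}$ with $r\geq n$), because $\Upsilon(I)$ is $\ZZ^d$-homogeneous and its pieces in degrees $\mathbf{e}_1$ and $\mathbf{e}_2$ are $\psi_{\mathbf{e}_i}(I_1)=0$. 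Lemma~\ref{lem:ker_of_pi} identifies $\ker\pi|_{S_{\mathbf{u}}}$ with $(I_R)_{\mathbf{u}}$ only degree by degree; it does not say $\ker\pi=I_R$. What is true, and what your computation actually establishes, is the degreewise identity $\Upsilon(I)_{\mathbf{u}}=(\pi|_{S_{\mathbf{u}}})^{-1}(I_{|\mathbf{u}|})$, equivalently $\Upsilon(I)=\bigoplus_{\mathbf{u}}\bigl(\pi^{-1}(I)\cap S_{\mathbf{u}}\bigr)$, which is a proper subideal of $\pi^{-1}(I)$ in general.

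Consequently the conclusion ``$f\in\pi^{-1}(I)=\Upsilon(I)$'' only yields $f\in\Upsilon(I)$ when $f$ is $\ZZ^d$-homogeneous. The gap is repairable: since $\Upsilon(I)$ is homogeneous for a grading by the torsion-free group $\ZZ^d$, radicality may be tested on homogeneous elements (with respect to any total order on degrees compatible with addition, the top component of $f^k$ is the $k$-th power of the top component of $f$, so one strips components one at a time). With that reduction in place your argument reads: for $f\in S_{\mathbf{u}}$ with $f^k\in\Upsilon(I)$ one has $\pi(f)^k\in I$, hence $\pi(f)\in I$, hence $f\in\Upsilon(I)_{\mathbf{u}}$ --- which is exactly the paper's proof, where $h=\pi(f)$ is extracted via the splitting \eqref{eq:_direct_sum_decomposition} and one uses $(\psi_{\mathbf{u}}(h))^k\equiv\psi_{k\mathbf{u}}(h^k)\bmod (I_R)_{k\mathbf{u}}$. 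So the approach is the same; replace the false global identity by its degreewise version and restrict the radical test to homogeneous elements. The same caveat applies to your closing geometric remark: $\Upsilon(I)$ is the multihomogeneous ideal attached to the image of $V(I)$ under the diagonal, not the full preimage ideal $\pi^{-1}(I)$.
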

	\begin{proof}
		Assume that $f\in S_{\mathbf{u}}$ satisfies $f^k \in \Upsilon(I)$. By \eqref{eq:_direct_sum_decomposition} we can write $f$ in a unique way as $f=g+\psi_{\mathbf{u}}(h)$ where $g\in (I_R)_{\mathbf{u}}$ and $h\in V_{|\mathbf{u}|}$. From $f^k\in \Upsilon(I)$ we get $(\psi_{\mathbf{u}}(h))^k \in \Upsilon(I)$. By the definition of $\psi_{\mathbf{u}}$ we have
		\[
		(\psi_{\mathbf{u}}(h))^k \equiv \psi_{k\mathbf{u}}(h^k) \mod (I_R)_{k\mathbf{u}}.
		\]
		As $\psi_{k\mathbf{u}}$ is injective we conclude that $h^k\in I$. Since $I$ is radical we get $h\in I$ and then, $\psi_{\mathbf{u}}(h)\in \Upsilon(I)$. It follows that $f=g+\psi_{\mathbf{u}}(h)$ is also in $\Upsilon(I)$.
	\end{proof}
	
	\begin{proposition}\label{lem:sat_to_sat}
		If $I$ is a $B_Y$-saturated ideal, then $\Upsilon(I)$ is a $B_X$-saturated ideal.
	\end{proposition}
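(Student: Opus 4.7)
The plan is to test $B_X$-saturation directly on a homogeneous element $f\in S_{\mathbf{u}}$ with $f\cdot B_X^N\subseteq \Upsilon(I)$ for some $N\geq 0$, using the direct sum decomposition \eqref{eq:_direct_sum_decomposition} to reduce to an assertion about $I\subset V$. Writing $f=g+\psi_{\mathbf{u}}(h)$ uniquely with $g\in (I_R)_{\mathbf{u}}$ and $h\in V_{|\mathbf{u}|}$, the summand $g$ already lies in $I_R\subseteq \Upsilon(I)$, so the problem reduces to showing $\psi_{\mathbf{u}}(h)\in \Upsilon(I)$, equivalently $h\in I$.

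The key multiplicative fact (analogous to the one used in the proof of Proposition~\ref{lem:rad_to_rad}) is that $\psi$ is multiplicative modulo $I_R$: for any $\mathbf{u},\mathbf{v}\in \ZZ^d_{\geq 0}$ and $a\in V_{|\mathbf{u}|}$, $b\in V_{|\mathbf{v}|}$, both $\psi_{\mathbf{u}}(a)\psi_{\mathbf{v}}(b)$ and $\psi_{\mathbf{u}+\mathbf{v}}(ab)$ have image $ab$ under $\pi$, so they agree modulo $(I_R)_{\mathbf{u}+\mathbf{v}}$ by Lemma~\ref{lem:ker_of_pi}. I will exploit this by testing $f\cdot B_X^N$ against the specific elements $(\alpha_{1,k}\alpha_{2,k}\cdots\alpha_{d,k})^N\in B_X^N$ for $k=1,\ldots, n$. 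A direct unpacking of the definition of $\psi$ shows
\[
\alpha_{1,k}^N\alpha_{2,k}^N\cdots \alpha_{d,k}^N = \psi_{N\mathbf{1}}(\beta_k^{Nd}),
\]
so the multiplicative identity yields
\[
f\cdot (\alpha_{1,k}\cdots \alpha_{d,k})^N \equiv \psi_{\mathbf{u}+N\mathbf{1}}(h\,\beta_k^{Nd}) \pmod{I_R}.
\]

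By hypothesis this element lies in $\Upsilon(I)=I_R\oplus \bigoplus_{\mathbf{w}}\psi_{\mathbf{w}}(I_{|\mathbf{w}|})$, and by the direct sum decomposition \eqref{eq:_direct_sum_decomposition} its $\psi_{\mathbf{u}+N\mathbf{1}}$-component must lie in $\psi_{\mathbf{u}+N\mathbf{1}}(I_{|\mathbf{u}|+Nd})$. Since $\psi_{\mathbf{u}+N\mathbf{1}}$ is injective, this forces $h\,\beta_k^{Nd}\in I$ for every $k\in\{1,\ldots, n\}$. As $I$ is $B_Y$-saturated, this implies $h\in I$, whence $\psi_{\mathbf{u}}(h)\in \Upsilon(I)$ and finally $f\in \Upsilon(I)$, as desired.

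The main subtlety is the choice of the right elements of $B_X^N$: arbitrary products won't land in the image of $\psi_{N\mathbf{1}}$ and the decomposition argument would fail. The pure powers $(\alpha_{1,k}\cdots\alpha_{d,k})^N$ are exactly the monomials whose exponent vectors are constant across the $d$ groups of variables, which is precisely the content of $\psi$ sending $\beta_k^{Nd}$ to this monomial, and this is why they fit the computation cleanly and let the $B_Y$-saturation of $I$ propagate to the $B_X$-saturation of $\Upsilon(I)$.
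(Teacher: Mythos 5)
Your proof is correct and follows essentially the same route as the paper's: decompose $f=g+\psi_{\mathbf u}(h)$ via \eqref{eq:_direct_sum_decomposition}, use the multiplicativity of $\psi$ modulo $I_R$ (Lemma~\ref{lem:ker_of_pi}) to transfer the saturation hypothesis to $h$, and invoke the $B_Y$-saturation of $I$. The only (harmless) difference is the choice of test elements: you use $N$-th powers of the diagonal monomials $(\alpha_{1,k}\cdots\alpha_{d,k})^N$, obtaining $h\beta_k^{Nd}\in I$ and then implicitly needing the standard pigeonhole fact that $B_Y^M\subseteq(\beta_1^{Nd},\ldots,\beta_n^{Nd})$ for $M$ large, whereas the paper tests once against all degree-$\mathbf 1$ generators $\alpha_{1,i_1}\cdots\alpha_{d,i_d}$ of $B_X$, which yields $hB_Y^d\subseteq I$ directly.
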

	\begin{proof}
		Assume that $f\in S_{\mathbf{u}}$ satisfies $f\alpha_{1,i_1}\cdots \alpha_{d,i_d}\in \Upsilon(I)$ for all $1\leq i_j \leq n$. By \eqref{eq:_direct_sum_decomposition} we can write $f$ in a unique way as $f=g+\psi_{\mathbf{u}}(h)$ where $g\in (I_R)_{\mathbf{u}}$ and $h\in V_{|\mathbf{u}|}$.
		
		Let $\overline{h} = \psi_{\mathbf{u}}(h)$. 
		From $f\alpha_{1,i_1}\cdots \alpha_{d,i_d}\in \Upsilon(I)$ we conclude that $ \overline{h}\alpha_{1,i_1}\cdots \alpha_{d,i_d}\in \Upsilon(I)$. Observe that the class in $(S/I_R)_{\mathbf{u}+\mathbf{1}}$ of $\overline{h}\alpha_{1,i_1}\cdots \alpha_{d,i_d}$ is the same as the class of $\psi_{\mathbf{u}+\mathbf{1}}(h \beta_{i_1}\cdots \beta_{i_d})$. Therefore, we obtain $h\beta_{i_1}\cdots \beta_{i_d}\in I$ for all $i_j$. Since $I$ is $B_Y$-saturated we derive $h\in I$ and so $\overline{h}\in \Upsilon(I)$. This yields $f=g+\overline{h}\in \Upsilon(I)$.
	\end{proof}
	
	\begin{proposition}\label{lem:apolar_to_apolar}
		If $I\subseteq \Ann(p_F)$, then $\Upsilon(I)\subseteq \Ann(F)$.  
	\end{proposition}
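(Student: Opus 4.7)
The plan is to reduce to a single degree using Proposition~\ref{prop:inclusion in onedeg}, then combine Lemma~\ref{lem:image_is_equal} with the kernel computation from Lemma~\ref{lem:ker_of_pi} to match $\psi_{\mathbf{1}}(h)$ against a preimage under $\pi$ living in $\Ann(F)_{\mathbf{1}}$.

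First I would note that since $F \in T_{\mathbf{1}}$, Proposition~\ref{prop:inclusion in onedeg} tells us that checking $\Upsilon(I) \subseteq \Ann(F)$ amounts to checking $\Upsilon(I)_{\mathbf{1}} \subseteq \Ann(F)_{\mathbf{1}}$. By the definition of $\Upsilon$,
\[
\Upsilon(I)_{\mathbf{1}} = (I_R)_{\mathbf{1}} + \psi_{\mathbf{1}}(I_d).
\]
The inclusion $(I_R)_{\mathbf{1}} \subseteq \Ann(F)_{\mathbf{1}}$ is immediate from Definition~\ref{def:I and I_R}, so the only content is to show $\psi_{\mathbf{1}}(I_d) \subseteq \Ann(F)_{\mathbf{1}}$.

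Next, for any $h \in I_d$, the hypothesis $I \subseteq \Ann(p_F)$ gives $h \in \Ann(p_F)_d$. By Lemma~\ref{lem:image_is_equal} we have the surjectivity $\pi(\Ann(F)_{\mathbf{1}}) = \Ann(p_F)_d$, so there exists $\theta \in \Ann(F)_{\mathbf{1}}$ with $\pi(\theta) = h$. On the other hand, directly from the definition of $\psi_{\mathbf{1}}$ on monomials, $\pi(\psi_{\mathbf{1}}(h)) = h$. Therefore $\psi_{\mathbf{1}}(h) - \theta \in \ker(\pi|_{S_{\mathbf{1}}})$, which by Lemma~\ref{lem:ker_of_pi} equals $(I_R)_{\mathbf{1}}$. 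Since $(I_R)_{\mathbf{1}} \subseteq \Ann(F)_{\mathbf{1}}$, we conclude
\[
\psi_{\mathbf{1}}(h) = \theta + \bigl(\psi_{\mathbf{1}}(h) - \theta\bigr) \in \Ann(F)_{\mathbf{1}},
\]
which completes the argument.

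There is no real obstacle here: the construction of $\Upsilon$ was engineered precisely so that the decomposition \eqref{eq:_direct_sum_decomposition} splits $S_{\mathbf{u}}$ into a ``symmetric'' piece $\psi_{\mathbf{u}}(V_{|\mathbf{u}|})$ and the ``antidiagonal'' piece $(I_R)_{\mathbf{u}}$, and $\pi$ restricted to the symmetric piece is an isomorphism onto $V_{|\mathbf{u}|}$. The only subtlety worth flagging is that one must invoke Proposition~\ref{prop:inclusion in onedeg} to avoid having to check the inclusion in all multidegrees $\mathbf{u}$ individually; otherwise one would need to verify $\psi_{\mathbf{u}}(I_{|\mathbf{u}|}) \subseteq \Ann(F)_{\mathbf{u}}$ for every $\mathbf{u}$, and the map $\pi$ from $\Ann(F)_{\mathbf{u}}$ to $\Ann(p_F)_{|\mathbf{u}|}$ need not be surjective outside degree $\mathbf{1}$.
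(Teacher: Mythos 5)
Your proof is correct and follows essentially the same route as the paper: reduce to multidegree $\mathbf{1}$ via Proposition~\ref{prop:inclusion in onedeg}, use Lemma~\ref{lem:image_is_equal} to lift $h\in\Ann(p_F)_d$ to $\Ann(F)_{\mathbf{1}}$, and identify the lift with $\psi_{\mathbf{1}}(h)$ modulo $(I_R)_{\mathbf{1}}=\ker\pi|_{S_{\mathbf{1}}}$. The only cosmetic difference is that the paper argues via injectivity of $\pi$ on the summand $\psi_{\mathbf{1}}(V_d)$ to get an exact equality $h'=\psi_{\mathbf{1}}(h)$, whereas you work modulo $(I_R)_{\mathbf{1}}\subseteq\Ann(F)_{\mathbf{1}}$, which is equally valid.
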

	\begin{proof}
		By Proposition~\ref{prop:inclusion in onedeg} it is enough to show that if $f\in \Upsilon(I)_{\mathbf{1}}$, then $f\in \Ann(F)$. By \eqref{eq:_direct_sum_decomposition} we can write $f$ in a unique way as $f=g+\psi_{\mathbf{1}}(h)$ where $g\in (I_R)_{\mathbf{1}}$ and $h\in I_{d}$. Let $\overline{h} = \psi_{\mathbf{1}}(h)$.
		Since $F$ is symmetric, $I_R\subseteq \Ann(F)$ so it is enough to show that $\overline{h}\in \Ann(F)$.
		We have $h \in I_d \subseteq \Ann(p_F)$. Then, by Lemma~\ref{lem:image_is_equal}, there exists $h'\in \Ann(F)_{\mathbf{1}}$ with $\pi(h') = h$. Since $(I_R)_{\mathbf{1}} = \ker \pi|_{S_{\mathbf{1}}}$, we can assume using decomposition \eqref{eq:_direct_sum_decomposition} that $h' \in \psi_{\mathbf{1}}(V_d)$. But $\pi|_{\psi_{\mathbf{1}}(V_d)}$ is injective and $\overline{h}$ is an element of $\psi_{\mathbf{1}}(V_d)$ with $\pi(\overline{h}) = h$. Hence, $h' = \overline{h}$ and we conclude that $\overline{h}\in \Ann(F)$.
	\end{proof}
	
	\begin{theorem}\label{theo:1}
		If $n\leq r \leq \binom{n+1}{2}$, then the desymmetrizing morphism $\Upsilon$ defines a morphism of multigraded Hilbert schemes $\Upsilon:\Hilb_V^{h_{r,Y}}\to \Hilb_S^{h_{r,X}}$ such that
		\begin{enumerate}
			\item[(i)] if $I\in \mathrm{Slip}_{r,Y}$, then $\Upsilon(I)\in \mathrm{Slip}_{r,X}$;
			\item[(ii)] if $I\subseteq \Ann(p_F)$, then $\Upsilon(I)\subseteq \Ann(F)$.
		\end{enumerate}
		In particular, it restricts to a morphism from $\VSPb(p_F, r)$ to $\VSPb(F, r)$.
	\end{theorem}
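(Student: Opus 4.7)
The plan is to assemble the four propositions already proved in this section (\ref{lem:morphism_well_defined}, \ref{lem:rad_to_rad}, \ref{lem:sat_to_sat}, \ref{lem:apolar_to_apolar}) together with a Hilbert-function identification and a density argument for the Slip locus. Before invoking them, I would check that, under the hypothesis $n\leq r\leq \binom{n+1}{2}$, the natural target of $\Upsilon$ is indeed $\Hilb_S^{h_{r,X}}$. Applying Proposition~\ref{lem:morphism_well_defined} with $\phi=h_{r,Y}$ gives an image in $\Hilb_S^{\phi'}$ where $\phi'(\mathbf{u})=\min\{r,\dim_\CC V_{|\mathbf{u}|}\}$; so I need $\phi'=h_{r,X}$, i.e.\ $\min\{r,\dim_\CC V_{|\mathbf{u}|}\}=\min\{r,\dim_\CC S_\mathbf{u}\}$ for every $\mathbf{u}\in\ZZ_{\geq 0}^d$. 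For $|\mathbf{u}|\in\{0,1\}$ both dimensions equal $1$ and $n$ respectively; for $|\mathbf{u}|\geq 2$ one checks $\dim_\CC V_{|\mathbf{u}|}\geq \binom{n+1}{2}$ and also $\dim_\CC S_\mathbf{u}\geq \binom{n+1}{2}$ (either some $u_i\geq 2$, or at least two coordinates of $\mathbf{u}$ are $1$), so both minima equal $r$.

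The morphism-of-schemes structure comes from the functor-of-points interpretation of multigraded Hilbert schemes: the rule $I\mapsto I_R+\sum_\mathbf{u}\psi_\mathbf{u}(I_{|\mathbf{u}|})$ is $\CC$-linear on each graded piece using the fixed linear maps $\psi_\mathbf{u}$ and the fixed subspaces $(I_R)_\mathbf{u}\subset S_\mathbf{u}$, so it extends verbatim to families of ideals over an arbitrary base scheme.

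Part (i) would then proceed by density. Let $U\subseteq \mathrm{Slip}_{r,Y}$ be the open dense subset of saturated ideals of $r$ distinct reduced points of $Y$. For $I\in U$, Propositions~\ref{lem:rad_to_rad} and~\ref{lem:sat_to_sat} make $\Upsilon(I)$ a radical, $B_X$-saturated ideal, while the containment $I_R\subseteq \Upsilon(I)$ forces its zero locus into the diagonal $\Delta\cong Y\subset X$ (using the fact, recorded in Definition~\ref{def:I and I_R}, that $I_R$ cuts out the diagonal). Combined with the Hilbert function identification above, $\Upsilon(I)$ is the saturated ideal of $r$ distinct points of $X$ --- the diagonal images of the original $r$ points --- having the generic Hilbert function $h_{r,X}$, and hence lies in the open dense subset of $\mathrm{Slip}_{r,X}$. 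Continuity of $\Upsilon$ and the closedness of $\mathrm{Slip}_{r,X}\subseteq \Hilb_S^{h_{r,X}}$ then yield $\Upsilon(\mathrm{Slip}_{r,Y})\subseteq\mathrm{Slip}_{r,X}$. Part (ii) is Proposition~\ref{lem:apolar_to_apolar} verbatim, and the final restriction to a morphism $\VSPb(p_F,r)\to \VSPb(F,r)$ follows by combining (i) and (ii).

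The main obstacle is step (i): one must identify the zero locus of $\Upsilon(I)$, for $I\in U$, with the diagonal embedding of the underlying $r$ points. The containment $I_R\subseteq \Upsilon(I)$ alone only confines that locus to the diagonal; the Hilbert-function input from Proposition~\ref{lem:morphism_well_defined} and the Hilbert-function check of the first paragraph are what rule out extra components and certify that the resulting reduced $r$-point scheme actually sits in the generic-Hilbert-function locus whose closure defines $\mathrm{Slip}_{r,X}$.
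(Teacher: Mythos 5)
Your proposal is correct and follows essentially the same route as the paper: the Hilbert-function check that $\phi'=h_{r,X}$ under $n\leq r\leq\binom{n+1}{2}$, then Proposition~\ref{lem:morphism_well_defined} for the morphism, Propositions~\ref{lem:rad_to_rad} and~\ref{lem:sat_to_sat} plus density of the reduced-points locus for (i), and Proposition~\ref{lem:apolar_to_apolar} for (ii). The only difference is that you spell out the density/continuity argument for (i) (including the identification of $\Upsilon(I)$ with the ideal of $r$ diagonal points), which the paper leaves implicit.
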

	\begin{proof}
		We first prove that the Hilbert function of $S/\Upsilon(I)$ is $h_{r,X}$. Since $r\geq n = \dim V_1$, it follows that $I_1 =0$ and so, $\Upsilon(I)_\mathbf{u} = 0$ for every $\mathbf{u}\in \ZZ^d_{\geq 0}$ with $|\mathbf{u}| \leq 1$. As a result, for such $\mathbf{u}$, one has $\dim_\CC (S/\Upsilon(I))_\mathbf{u} = \dim_\CC S_\mathbf{u} = \min \{r, \dim_\CC S_\mathbf{u}\} = h_{r,X}(\mathbf{u})$. Assume that $|\mathbf{u}| \geq 2$. Then $h_{r,Y}(|\mathbf{u}|) = \min \{\dim_\CC V_{|\mathbf{u}|}, r\} = r$ by the assumption that $r\leq \binom{n+1}{2} = \dim_\CC V_{2}$. 
		Then, $\dim_\CC (S/\Upsilon(I))_\mathbf{u} = r$ by Proposition~\ref{lem:morphism_well_defined}.
		On the other hand, $\dim_\CC S_{\mathbf{u}} \geq \binom{n+1}{2}$ so 
		\[
		\dim_\CC (S/\Upsilon(I))_{\mathbf{u}} = r = \min\{r, \dim_\CC S_\mathbf{u}\} = h_{r,X}(\mathbf{u}).
		\]
		
		It follows from Proposition~\ref{lem:morphism_well_defined} that $I\mapsto \Upsilon(I)$ defines a morphism of multigraded Hilbert schemes. Statement (i) follows from Propositions~\ref{lem:rad_to_rad} and \ref{lem:sat_to_sat}; statement (ii) follows from Proposition~\ref{lem:apolar_to_apolar}.
	\end{proof}

	We continue to assume that $n\leq r \leq \binom{n+1}{2}$.
	Let $g$ be the Hilbert function of $S/I_R$ and let $\Hilb_S^{h_{r,X}, g}$ be the flag multigraded Hilbert scheme equipped with its projections $\pi_1$ and $\pi_2$ onto $\Hilb_S^{h_{r,X}}$ and $\Hilb_S^g$ respectively.
	Let $Z_R$ be the fiber of $\pi_2$ over $I_R$.
	
	\begin{definition}[{\bf Symmetrizing morphism}]\label{def:symmorphism}
		We define a morphism $\xi\colon Z_R \to \Hilb_V^{h_{r,Y}}$ on closed points by sending $I_R \subseteq J$ to $\Sigma(J)$ where 
		$\Sigma$ is the {\it symmetrizing morphism}. To define this latter morphism, we need to introduce more notation. Let $\mathcal E\subset \lbrace 0,1\rbrace^{\times d}$ be the set of $d$-tuples $\mathbf{0}, \mathbf{e}_1, {\bf{e}}_1+{\bf{e}}_2, \cdots, {\bf{e}}_1+\ldots+{\bf{e}}_{d-1}$. Define $a{\bf 1}=(a,a,\ldots,a)$. Then, on closed points, $\Sigma$ is defined by
		\[
		\Sigma(J) :=\bigoplus_{a\in \ZZ} \Big(\bigoplus_{{\bf s}\in \mathcal E} \pi(J_{a{\bf 1}+{\bf s}})\Big).
		\]
	\end{definition}
	Note that, for each $a\in \ZZ$, the summands have degrees $da, da+1,\ldots, d(a+1)-1$. So these are organized as residue classes modulo $d$. 
	
	We check that $\xi$ is well-defined: by the definition of $\pi$ the graded $\CC$-vector space $\Sigma(J)$ is an ideal. Its Hilbert function is $h_{r,Y}$ by Lemma~\ref{lem:ker_of_pi}.
	
	Next we claim that if $J \in \Hilb_S^{h_{r,X}}$ is a radical and $B_X$-saturated ideal containing $I_R$, then $\xi(I_R\subseteq J)=\Sigma(J)$ is in $\mathrm{Slip}_{r,Y}$.  
	
	\begin{proposition}\label{lem:radical_if _radical}
		Assume that $I_R\subseteq J$ with $J\in \Hilb_S^{h_{r,X}}$. If $J$ is radical and $B_X$-saturated, then $\Sigma(J)$ is a radical ideal of $V$.
	\end{proposition}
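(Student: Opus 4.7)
The plan is to realize $\Sigma(J)$ as the saturated radical ideal of a finite reduced subscheme of $\PP^{n-1}$, from which radicality is immediate.

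First, set $Z := V(J) \subseteq X$. Since $J$ is radical, $B_X$-saturated, and contains $I_R$, and $I_R$ cuts out the small diagonal $\Delta \subseteq X$, the scheme $Z$ is a reduced zero-dimensional subscheme of $\Delta$. Geometrically, the map $\pi\colon S \to V$ factors through $S/I_R$ and realizes pullback of sections along the diagonal embedding $\Delta \hookrightarrow X$ (after the identification $\Delta \cong \PP^{n-1}$ via any factor projection). Under this identification $Z$ becomes a reduced subscheme of $\PP^{n-1}$; let $I(Z) \subseteq V$ denote its saturated radical ideal. The central claim is $\Sigma(J) = I(Z)$, and since $I(Z)$ is radical this proves the proposition.

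For the inclusion $\Sigma(J) \subseteq I(Z)$: any $\theta \in J_{a\mathbf{1}+\mathbf{s}}$ vanishes on $Z \subseteq X$, so its pullback $\pi(\theta)$ vanishes on $Z \subseteq \PP^{n-1}$. For the reverse inclusion, take $h \in I(Z)_m$ with $m = ad + |\mathbf{s}|$ and $\mathbf{s} \in \mathcal{E}$, and form the lift $\tilde h := \psi_{a\mathbf{1}+\mathbf{s}}(h) \in S_{a\mathbf{1}+\mathbf{s}}$. Then $\pi(\tilde h) = h$, and $\tilde h$ vanishes on $Z \subseteq X$ since its restriction to $\Delta$ coincides with $h$, which vanishes on $Z \subseteq \Delta$. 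It remains to promote this to $\tilde h \in J_{a\mathbf{1}+\mathbf{s}}$, which would give $h = \pi(\tilde h) \in \pi(J_{a\mathbf{1}+\mathbf{s}}) = \Sigma(J)_m$ and complete the proof.

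The main obstacle is this last step: showing $J_{a\mathbf{1}+\mathbf{s}}$ equals the full space of sections in $S_{a\mathbf{1}+\mathbf{s}}$ vanishing on $Z$. I would justify it via the standard correspondence for $B_X$-saturated ideals on the toric variety $(\PP^{n-1})^d$ in any effective multidegree, namely $J_\mathbf{u} = H^0(X, \mathcal{I}_Z(\mathbf{u}))$ for all $\mathbf{u}\in\ZZ^d_{\geq 0}$. Alternatively, to avoid sheaf-theoretic input, a direct dimension count using Lemma~\ref{lem:ker_of_pi} suffices: evaluation of sections at the $r$ points of $Z$ factors through $\pi$, the hypothesis $r \leq \binom{n+1}{2}$ together with the Hilbert function $h_{r,X}$ of $J$ forces surjectivity of the composite whenever $\dim_\CC S_{a\mathbf{1}+\mathbf{s}} \geq r$, and the remaining small-degree cases $m \leq 1$ are handled using the conciseness of $Z$ in $\PP^{n-1}$, itself forced by the equality $J_{\mathbf{e}_1} = 0$ coming from the Hilbert function. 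Combining the two inclusions gives $\Sigma(J) = I(Z)$, hence the radicality of $\Sigma(J)$.
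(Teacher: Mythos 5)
Your argument is correct in substance but takes a genuinely different, more geometric route than the paper. The paper argues element by element: given $g^k\in\Sigma(J)$ it lifts $g$ to $G\in S_{a\mathbf{1}+\mathbf{s}}$, multiplies by the missing variables to land in multidegree $k(a+1)\mathbf{1}$, uses $I_R\subseteq J$ (via Lemma~\ref{lem:ker_of_pi}) to identify the result with an element of $J$ modulo $I_R$, and then peels off the radical and the $B_X$-saturation hypotheses in turn to conclude $G\in J$. You instead identify $\Sigma(J)$ with the full vanishing ideal of the reduced point set $Z=V(J)\subseteq\Delta\cong\PP^{n-1}$, using that $\pi$ is restriction of sections along the diagonal; this is a clean argument and in fact proves more than radicality (it also gives that $\Sigma(J)$ is saturated and is the ideal of $r$ points with generic Hilbert function, which is precisely what is needed later to place $\Sigma(J)$ in $\mathrm{Slip}_{r,Y}$). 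The one step you should tighten is the justification that $J_{\mathbf{u}}$ is the \emph{entire} space of degree-$\mathbf{u}$ forms vanishing on $Z$. The ``standard correspondence'' $J_{\mathbf{u}}=H^0(X,\mathcal{I}_Z(\mathbf{u}))$ is not something to invoke for merely $B_X$-saturated ideals; the clean justification uses radicality: a radical ideal is the intersection of its minimal primes, $B_X$-saturation forces each of these to avoid $B_X$ and hence to be the geometric prime of an irreducible subvariety of $X$, so $J=\bigcap_{z\in Z}I(z)=I_X(Z)$ by the Nullstellensatz. Your alternative dimension count is also shakier than you present it: surjectivity of evaluation at the $r$ points of $Z$ in a given multidegree is not automatic from $\dim_\CC S_{\mathbf{u}}\geq r$ when the points are in special position (and points on the small diagonal are very special); what actually forces $\dim_\CC S_{\mathbf{u}}/I_X(Z)_{\mathbf{u}}=\min\{r,\dim_\CC S_{\mathbf{u}}\}$ is the hypothesis $J\in\Hilb_S^{h_{r,X}}$ combined with $J=I_X(Z)$. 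With the primary-decomposition justification in place, your proof is complete.
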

	\begin{proof}
		Let $g\in V_N$ satisfy $g^k \in \Sigma(J)$ for some positive integer $k$. We can write $N = ad + m$ where $0\leq m < d$. Let $\mathbf{s} \in \{0,1\}^{\times d}$ be the element with $s_i = 1$ if and only if $i\leq m$. There exists $G\in S_{a\mathbf{1} + \mathbf{s}}$ with $g=\pi(G)$. To prove the statement it is sufficient to verify that $G \in J$, because then $g\in \Sigma(J)$.
		
		For every $\mathbf{j}  = (j_{m+1},\ldots, j_{d}) \in \{1,\ldots, n\}^{\times (d-m)}$, $(\beta_{j_{m+1}}\cdots \beta_{j_d}g)^k \in \Sigma(J)$. Hence, there exists $H(\mathbf{j}) \in J_{k(a+1)\mathbf{1}}$ with $\pi(H(\mathbf{j})) = (\beta_{j_{m+1}}\cdots \beta_{j_d}g)^k$.
		From the equality $\pi((\prod_{i=m+1}^d \alpha_{i,j_{i}} G)^k) = (\beta_{j_{m+1}}\cdots \beta_{j_{d}} g)^k$ it follows by Lemma~\ref{lem:ker_of_pi} that $(\prod_{i=m+1}^d \alpha_{i,j_{i}} G)^k - H(\mathbf{j}) \in I_R$. By assumption, $I_R\subseteq J$ and by construction $H(\mathbf{j})\in J$. Hence $(\prod_{i=m+1}^d \alpha_{i,j_{i}} G)^k \in J$.
		From the assumption that $J$ is radical we get $\prod_{i=m+1}^d \alpha_{i,j_{i}} G\in J$. Since $J$ is $B_X$-saturated, it is  $(\alpha_{m+1,1}, \ldots, \alpha_{m+1,n})\cdots (\alpha_{d,1},\ldots, \alpha_{d,n})$-saturated. So $G\in J$. 
	\end{proof}

	\begin{proposition}\label{lem:in_Slip_if_in_Slip}
		Let $n\leq r \leq \binom{n+1}{2}$. Suppose every irreducible component of $Z_R$ contains a radical and $B_X$-saturated ideal. If $J\in \mathrm{Slip}_{r,X}$ contains $I_R$, then $\Sigma(J)$ is in $\mathrm{Slip}_{r,Y}$. 
	\end{proposition}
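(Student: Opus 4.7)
My plan is to prove the stronger statement that $\xi(Z_R) \subseteq \mathrm{Slip}_{r,Y}$; the proposition will follow at once, since any $J \in \mathrm{Slip}_{r,X}$ containing $I_R$ yields $(I_R \subseteq J) \in Z_R$ and hence $\Sigma(J) = \xi(I_R \subseteq J) \in \mathrm{Slip}_{r,Y}$. To establish this inclusion I would fix an arbitrary irreducible component $Z'$ of $Z_R$ and show $\xi(Z') \subseteq \mathrm{Slip}_{r,Y}$.

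By hypothesis, I can choose a point $(I_R \subseteq J_0) \in Z'$ with $J_0$ radical and $B_X$-saturated. Since $J_0 \in \Hilb_S^{h_{r,X}}$, it is the saturated ideal of a reduced $0$-dimensional subscheme of $X$ of length $r$; the containment $I_R \subseteq J_0$ forces these $r$ points to lie on the diagonal, so they are of the form $(q_i,\ldots, q_i)$ for pairwise distinct $q_1,\ldots, q_r \in Y$. Set $Q = \{q_1,\ldots, q_r\}$ and let $I_Q \subseteq V$ denote its saturated ideal. The key computation is $\pi(J_{0, \mathbf{u}}) = I_{Q, |\mathbf{u}|}$ for every $\mathbf{u} \in \ZZ^d_{\geq 0}$: the inclusion $\subseteq$ follows from the identity $\theta(q_i,\ldots,q_i) = \pi(\theta)(q_i)$ obtained by substituting $\alpha_{k,j} \mapsto (q_i)_j$, while $\supseteq$ is verified by lifting each $h \in I_{Q,|\mathbf{u}|}$ to $\psi_{\mathbf{u}}(h)$, which vanishes on every diagonal point of $Q$ and hence lies in $J_{0,\mathbf{u}}$, and invoking $\pi \circ \psi_{\mathbf{u}} = \mathrm{id}$ together with Lemma~\ref{lem:ker_of_pi}. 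Consequently $\Sigma(J_0) = I_Q$, which belongs to $\mathrm{Slip}_{r,Y}$ by definition.

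Now $\overline{\xi(Z')}$ is an irreducible closed subset of $\Hilb_V^{h_{r,Y}}$ containing $I_Q$. To conclude $\overline{\xi(Z')} \subseteq \mathrm{Slip}_{r,Y}$, I would invoke the fact that $I_Q$, as the saturated ideal of $r$ distinct points in $\PP^{n-1}$, lies on a unique irreducible component of $(\Hilb_V^{h_{r,Y}})_{\mathrm{red}}$, namely $\mathrm{Slip}_{r,Y}$. This is a consequence of the classical tangent space computation $T_{I_Q}\Hilb_V^{h_{r,Y}} \cong \bigoplus_{i=1}^r T_{q_i} Y$, of dimension $r(n-1) = \dim \mathrm{Slip}_{r,Y}$, which forces $I_Q$ to be a smooth point of $(\Hilb_V^{h_{r,Y}})_{\mathrm{red}}$. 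Since an irreducible closed subset containing a smooth point must be contained in the unique component through that point, $\xi(Z') \subseteq \mathrm{Slip}_{r,Y}$. Ranging over components of $Z_R$ gives the desired $\xi(Z_R) \subseteq \mathrm{Slip}_{r,Y}$.

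The hard part will be the last step, namely verifying that $I_Q$ lies on a unique irreducible component of $(\Hilb_V^{h_{r,Y}})_{\mathrm{red}}$. This is folklore for the classical Hilbert scheme of $r$ points on a smooth projective variety, but its transfer to the multigraded Hilbert scheme with the prescribed Hilbert function $h_{r,Y}$ (rather than merely a Hilbert polynomial) requires care; the assumption $n\le r\le \binom{n+1}{2}$ should be enough to ensure the two schemes are locally isomorphic near $I_Q$, via the results of \cite{bb19}. A possible alternative, should this smoothness input prove delicate, is to check that radical, $B_X$-saturated ideals form an open subset of $Z_R$; then its intersection with $Z'$ is a nonempty, hence dense, open subset mapping into $\mathrm{Slip}_{r,Y}$ under $\xi$, and continuity of $\xi$ together with closedness of $\mathrm{Slip}_{r,Y}$ yield $\xi(Z') \subseteq \mathrm{Slip}_{r,Y}$ directly.
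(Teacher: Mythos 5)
Your overall strategy --- prove the stronger statement $\xi(Z_R)\subseteq \mathrm{Slip}_{r,Y}$ by first handling radical, $B_X$-saturated points of each component of $Z_R$ and then spreading out over the whole component --- is the same as the paper's. Your computation that $\Sigma(J_0)=I_Q$ when $J_0$ is radical and $B_X$-saturated (the $r$ points lie on the diagonal because $I_R$ cuts out the diagonal, and $\pi(J_{0,\mathbf{u}})=I_{Q,|\mathbf{u}|}$ via $\pi\circ\psi_{\mathbf{u}}=\mathrm{id}$) is a correct and somewhat more geometric substitute for the paper's Proposition~\ref{lem:radical_if _radical}, which only establishes radicality of $\Sigma(J_0)$ abstractly.

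The weak spot is your announced primary route for the final step. The tangent space of the multigraded Hilbert scheme at $I_Q$ is $\mathrm{Hom}_V(I_Q,V/I_Q)_0$, and its identification with $\bigoplus_i T_{q_i}Y$ (equivalently, smoothness of $\Hilb_V^{h_{r,Y}}$ at an arbitrary saturated ideal of $r$ distinct points with generic Hilbert function) is neither proved in \cite{bb19} nor something you should rely on; you rightly flag it as delicate. It is also unnecessary: the fact you actually need --- that $\mathrm{Slip}_{r,Y}$ is the \emph{unique} component through $I_Q$ --- already follows from $\mathrm{Sip}_{r,Y}$ being open and irreducible (the content of \cite[Proposition~3.13]{bb19}), since any irreducible component meeting an irreducible open subset must equal its closure. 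Your stated fallback --- openness of the radical, $B_X$-saturated locus, its nonempty (by hypothesis) hence dense intersection with the irreducible component $Z'$, and then continuity of $\xi$ together with closedness of $\mathrm{Slip}_{r,Y}$ --- is essentially verbatim the paper's argument and closes the proof. So the proposal is correct once the smoothness claim is replaced by either of these openness arguments.
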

	\begin{proof}
		If $J\in \mathrm{Slip}_{r,X}$ is radical and $B_X$-saturated, then $\Sigma(J)$ is in $\mathrm{Slip}_{r,Y}$ by Proposition ~\ref{lem:radical_if _radical}. 
		Let $Z'_R\subset Z_R$ be any irreducible component. Consider the restriction of the first projection $\pi_{1|Z'_R}\colon Z'_R\longrightarrow \Hilb_S^{h_{r,X}}$. 
		
		Let $\mathrm{Sip}_{r,X}$ be the set of radical and $B_X$-saturated ideals and $(Z'_R)^\circ = \pi_{1|Z'_R}^{-1}(\mathrm{Sip}_{r,X})$.
		We claim that $\overline{(Z'_R)^\circ} = \pi_{1|Z'_R}^{-1}(\mathrm{Slip}_{r,X})$. Since $\pi_{1|Z'_R}$ is injective, it is enough to prove that the images of these two sets under $\pi_{1|Z'_R}$ are equal. To simplify notation let $\phi$ denote $\pi_{1|Z'_R}$.
		Since $\phi$ is a closed map one has
		\[
		\phi(\overline{(Z'_R)^{\circ}}) = \overline{\phi((Z'_R)^\circ)} = \overline{\mathrm{Sip}_{r,X}\cap \phi(Z'_R)} = \phi(Z'_R).
		\]
		Here the third equality is proven as follows: since $\phi(Z'_R)$ is irreducible and $\mathrm{Sip}_{r,X}$ is open, the condition $\mathrm{Sip}_{r,X}\cap \phi(Z'_R)\neq \emptyset$ implies that the latter is nonempty open and so dense in $\phi(Z'_R)$. Moreover
		$\phi(\phi^{-1}(\mathrm{Slip}_{r,X})) = \phi(Z'_R)$. The statement follows since $\xi$ is a closed map.
	\end{proof}

	\begin{corollary}
		Keep the assumption from Proposition \ref{lem:in_Slip_if_in_Slip}. If there exists $J\in \VSPb(F,r)$ containing $I_R$, then $K:=\Upsilon(\Sigma(J))\in \VSPb(F,r)$. In particular, the ideal $K$ is such that
		such that $\pi(K_{\mathbf{a}}) = \pi(K_{\mathbf{b}})$ for all multidegrees $\mathbf{a}, \mathbf{b}$ with $|\mathbf{a}| = |\mathbf{b}|$.
	\end{corollary}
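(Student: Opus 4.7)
The plan is to show $\Sigma(J) \in \VSPb(p_F, r)$ and then invoke Theorem~\ref{theo:1} to obtain $K = \Upsilon(\Sigma(J)) \in \VSPb(F,r)$; the ``in particular'' clause will then follow from the identities $\pi|_{I_R} = 0$ and $\pi \circ \psi_{\mathbf{u}} = \mathrm{id}_{V_{|\mathbf{u}|}}$. Since $J \in \VSPb(F,r) \subseteq \mathrm{Slip}_{r,X}$ contains $I_R$ by hypothesis, Proposition~\ref{lem:in_Slip_if_in_Slip} immediately yields $\Sigma(J) \in \mathrm{Slip}_{r,Y}$, so the bulk of the work is verifying the apolar inclusion $\Sigma(J) \subseteq \Ann(p_F)$.

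Unpacking the definition of $\Sigma$, each graded piece has the form $\Sigma(J)_N = \pi(J_{a\mathbf{1}+\mathbf{e}_1+\cdots+\mathbf{e}_m})$ where $N = ad + m$ and $0 \leq m < d$. For $N > d$ the inclusion is automatic since $\Ann(p_F)_N = V_N$, while $N = d$ (so $a = 1$, $m = 0$) is precisely the content of Lemma~\ref{lem:image_of_degree_111}. The case $N < d$ is the main obstacle and requires extending Lemma~\ref{lem:image_of_degree_111} to apolar elements of lower multidegree. For $\psi \in J_{\mathbf{e}_1 + \cdots + \mathbf{e}_N}$, I would exploit that $J$ is an ideal: for every $j_{N+1}, \ldots, j_d \in \{1,\ldots, n\}$, the element $(\prod_{i=N+1}^d \alpha_{i, j_i})\psi$ lies in $J_{\mathbf{1}} \subseteq \Ann(F)_{\mathbf{1}}$. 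Applying Lemma~\ref{lem:image_of_degree_111} yields $(\prod_{i=N+1}^d \beta_{j_i}) \pi(\psi) \in \Ann(p_F)_d$, and as the $j_i$ range over all choices, every monomial of degree $d-N$ annihilates $\pi(\psi) \circ p_F \in P_{d-N}$ under contraction. Non-degeneracy of the pairing $V_{d-N} \times P_{d-N} \to \CC$ then forces $\pi(\psi) \circ p_F = 0$, i.e., $\pi(\psi) \in \Ann(p_F)_N$.

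With $\Sigma(J) \in \VSPb(p_F, r)$ in hand, Theorem~\ref{theo:1} delivers $K = \Upsilon(\Sigma(J)) \in \VSPb(F, r)$. For the ``in particular'' clause, the definition of $\Upsilon$ gives $K_{\mathbf{u}} = (I_R)_{\mathbf{u}} + \psi_{\mathbf{u}}(\Sigma(J)_{|\mathbf{u}|})$. Lemma~\ref{lem:ker_of_pi} says $\pi$ kills $(I_R)_{\mathbf{u}}$, and the monomial formulas for $\pi$ and $\psi_{\mathbf{u}}$ show at once that $\pi \circ \psi_{\mathbf{u}} = \mathrm{id}_{V_{|\mathbf{u}|}}$. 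Therefore $\pi(K_{\mathbf{u}}) = \Sigma(J)_{|\mathbf{u}|}$, a subspace depending only on the total degree $|\mathbf{u}|$, so $\pi(K_{\mathbf{a}}) = \pi(K_{\mathbf{b}})$ whenever $|\mathbf{a}| = |\mathbf{b}|$, as required.
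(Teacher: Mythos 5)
Your proof is correct and follows essentially the same route as the paper's (which is implicit here but spelled out in the proof of Corollary~\ref{prop:border_ranks_agree}): use Proposition~\ref{lem:in_Slip_if_in_Slip} to get $\Sigma(J)\in\mathrm{Slip}_{r,Y}$, show $\Sigma(J)\subseteq\Ann(p_F)$ so that $\Sigma(J)\in\VSPb(p_F,r)$, and then apply $\Upsilon$ via Theorem~\ref{theo:1}; the ``in particular'' clause from $\pi\circ\psi_{\mathbf u}=\mathrm{id}$ and $\pi(I_R)=0$ is exactly right. The only difference is that where the paper reduces the apolarity check to the single degree $d$ via Proposition~\ref{prop:inclusion in onedeg}, you verify the degrees $N<d$ by hand (multiplying by $\prod_{i=N+1}^d\alpha_{i,j_i}$ and using nondegeneracy of the pairing $V_{d-N}\times P_{d-N}\to\CC$), which amounts to reproving that special case of Proposition~\ref{prop:inclusion in onedeg} and is perfectly valid.
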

	
	\begin{corollary}\label{prop:border_ranks_agree}
		Keep the assumption from Proposition \ref{lem:in_Slip_if_in_Slip}. Let $F$ be a concise symmetric tensor in $(\CC^n)^{\otimes d}$ of border tensor rank $r$. If there exists $J\in \VSPb(F,r)$ containing $I_R$, then the corresponding homogeneous polynomial $p_F$ of degree $d$ has  symmetric border rank $r$. In particular, if $F$ is a concise symmetric tensor in $(\CC^n)^{\otimes d}$ of minimal border rank, then the corresponding polynomial $p_F$ has minimal  symmetric border rank. 
	\end{corollary}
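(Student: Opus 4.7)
The plan is to construct an explicit element of $\VSPb(p_F, r)$ from the given $J$, which immediately yields $\brk_S(p_F) \leq r$; combined with the trivial inequality $\brk(F) \leq \brk_S(p_F)$ this gives the desired equality. The natural candidate is the ideal $\Sigma(J) \subseteq V$ produced by the symmetrizing morphism from Definition \ref{def:symmorphism}.

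First, I would invoke Proposition \ref{lem:in_Slip_if_in_Slip} --- whose hypothesis on $Z_R$ is part of the standing assumption --- to conclude $\Sigma(J) \in \mathrm{Slip}_{r,Y}$. Its Hilbert function equals $h_{r,Y}$, as already verified in the construction of $\xi$ via Lemma \ref{lem:ker_of_pi}.

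Second, I would verify the apolarity inclusion $\Sigma(J) \subseteq \Ann(p_F)$. By Proposition \ref{prop:inclusion in onedeg} it suffices to check this in degree $d$. Inspecting the formula
\[
\Sigma(J) = \bigoplus_{a\in \ZZ} \bigoplus_{\mathbf{s} \in \mathcal{E}} \pi(J_{a\mathbf{1}+\mathbf{s}}),
\]
the only pair $(a, \mathbf{s})$ with $da + |\mathbf{s}| = d$ is $(1, \mathbf{0})$, so $\Sigma(J)_d = \pi(J_{\mathbf{1}})$. Since $J \subseteq \Ann(F)$, Lemma \ref{lem:image_of_degree_111} yields $\pi(J_{\mathbf{1}}) \subseteq \pi(\Ann(F)_{\mathbf{1}}) \subseteq \Ann(p_F)_d$, as desired.

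Combining the two steps gives $\Sigma(J) \in \VSPb(p_F, r)$ and hence $\brk_S(p_F) = r$. The \emph{in particular} clause is obtained by specializing to $r = n$: conciseness of $F$ guarantees $\brk_S(p_F) \geq \brk(F) = n$, so the equality $\brk_S(p_F) = n$ is indeed minimal. I do not expect any serious difficulty; the only mildly delicate point is the bookkeeping needed to see that $\Sigma(J)_d = \pi(J_{\mathbf{1}})$ from the disjoint union of strata indexed by $\mathcal{E}$. The remainder is an assembly of the preceding machinery, which already did the substantive work of building $\Sigma$ and proving that it preserves radicality, $B_X$-saturation, and apolarity.
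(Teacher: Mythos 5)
Your proof of the first statement is essentially the paper's own: apply Proposition~\ref{lem:in_Slip_if_in_Slip} to place $\Sigma(J)$ in $\mathrm{Slip}_{r,Y}$, identify $\Sigma(J)_d=\pi(J_{\mathbf 1})$, use Lemma~\ref{lem:image_of_degree_111} to get $\pi(J_{\mathbf 1})\subseteq \Ann(p_F)_d$, and conclude with Proposition~\ref{prop:inclusion in onedeg} and Theorem~\ref{mainbb}. That part is correct.

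There is, however, a genuine gap in the \emph{in particular} clause. It is not a mere specialization to $r=n$: the hypothesis ``there exists $J\in\VSPb(F,r)$ containing $I_R$'' is absent from the second statement, so you must prove that it is automatically satisfied when $F$ is concise, symmetric, and of minimal border rank. Your text only records the lower bound $\brk_S(p_F)\geq\brk(F)$ (which is needed already in the first part and holds for any symmetric tensor, not by conciseness) and never exhibits a $J$ containing $I_R$. The missing argument runs as follows. Theorem~\ref{mainbb} gives some $J\in\VSPb(F,n)$. Its Hilbert function is $h_{n,X}$, so for every multidegree $\mathbf v\in\mathcal A$ (exactly two entries equal to $1$) one has $\dim_\CC(S/J)_{\mathbf v}=n$; on the other hand $\dim_\CC(S/\Ann(F))_{\mathbf v}=n$ as well, so the inclusion $J_{\mathbf v}\subseteq\Ann(F)_{\mathbf v}$ is an equality. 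Since $F$ is symmetric, $I_R\subseteq\Ann(F)$, hence $(I_R)_{\mathbf v}\subseteq J_{\mathbf v}$ for all $\mathbf v\in\mathcal A$; as $I_R$ is generated in these multidegrees, $I_R\subseteq J$. Only then does the first statement apply. Without this step the clause that actually delivers Conjecture~\ref{quest} under the standing hypothesis on $Z_R$ is unproved.
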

	\begin{proof}
		By Proposition~\ref{lem:in_Slip_if_in_Slip}, the ideal $\Sigma(J)\in \mathrm{Slip}_{r,Y}$ and since $J_{{\bf 1}}\subseteq \Ann(F)_{{\bf 1}}$ it follows from Lemma~\ref{lem:image_of_degree_111} that
		$\Sigma(J)_d = \pi(J_{{\bf 1}}) \subseteq \Ann(p_F)_d$. Hence, by Proposition \ref{prop:inclusion in onedeg}, $\Sigma(J)\subseteq \Ann(p_F)$. As a result $\Sigma(J) \in \VSPb(p_F, r)$. Then, by Theorem~\ref{mainbb}, the symmetric border rank of $p_F$ is at most $r$ and so equal to $r$. 
		
		Assume that $F$ is a concise symmetric tensor of minimal border rank. By Theorem~\ref{mainbb} there exists an ideal $J\in \VSPb(F,n)$. We have $J_{{\bf v}} = \Ann(F)_{{\bf v}}$ for all ${\bf v}\in \mathcal A$ (see Definition~\ref{def:I and I_R}). Since $F$ is symmetric $I_R\subseteq \Ann(F)$ and we conclude that $I_R\subseteq J$. Now we may apply the first part of the statement. 
	\end{proof}

	So Corollary \ref{prop:border_ranks_agree} states that
	if every irreducible component of $Z_R$ contains a radical and $B_X$-saturated ideal, then the minimal border rank Comon's Conjecture \ref{quest} holds for concise tensors in $(\CC^n)^{\otimes d}$. However, we do not know whether this is an effective condition to check, for a description of $Z_R$ might be hard to achieve. 
	Though it may be potentially ineffective, we believe this was an interesting construction to record.

	\section{A morphism in the other direction}\label{sec:rho}
	
	\begin{definition}[{\bf Map} $\rho$]
		Let $\rho\colon S\to V$ be given by $\alpha_{1,j} \mapsto \beta_j$ for all $1\leq j \leq n$ and $\alpha_{i,j}\mapsto 0$ for all $2\leq i \leq d$ and all $1\leq j \leq n$. 
	\end{definition}
	
	\begin{proposition}[{\cite[Corollary~2.7]{Man23}}]\label{prop:map_of_Slips}
		Let $Z=\PP^{n_1}\times \cdots \times \PP^{n_k}$ and $W = Z \times \PP^{m_1}\times \cdots \times \PP^{m_l}$. Then, the homogeneous coordinate ring $C[Z]$ of $Z$ is a subring of the homogeneous coordinate ring $C[W]$ of $W$ and for every positive integer $r$ there is a morphism of multigraded Hilbert schemes $\Hilb_{C[W]}^{h_{r, W}} \to \Hilb_{C[Z]}^{h_{r,Z}}$ given on closed points by $J\mapsto J\cap C[Z]$.
		Furthermore, this morphism maps $\mathrm{Slip}_{r, W}$ onto $\mathrm{Slip}_{r,Z}$.
	\end{proposition}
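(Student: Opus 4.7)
The plan is to construct the map first on closed points, extend it to a morphism of multigraded Hilbert schemes using the universal property of Haiman--Sturmfels, and finally analyze what happens on the $\mathrm{Slip}$-loci.

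First I would fix the inclusion $C[Z]\hookrightarrow C[W]$ by identifying the grading group $\ZZ^k$ of $C[Z]$ with the subgroup $\ZZ^k\times\{0\}\subset \ZZ^{k+l}$ grading $C[W]$. For every $\mathbf{u}\in\ZZ^k$ the $\mathbf{u}$-graded piece of $C[Z]$ coincides with the $(\mathbf{u},\mathbf{0})$-graded piece of $C[W]$, because any monomial of multidegree $(\mathbf{u},\mathbf{0})$ in $C[W]$ uses no $\PP^{m_j}$-variables. Given $J\in \Hilb_{C[W]}^{h_{r,W}}$, the intersection $J\cap C[Z]$ is a $\ZZ^k$-homogeneous ideal and one has the basic identity
\[
(J\cap C[Z])_{\mathbf{u}} = J_{(\mathbf{u},\mathbf{0})},\qquad C[Z]_{\mathbf{u}} = C[W]_{(\mathbf{u},\mathbf{0})},
\]
which yields $\dim_\CC (C[Z]/(J\cap C[Z]))_{\mathbf{u}} = \dim_\CC (C[W]/J)_{(\mathbf{u},\mathbf{0})} = \min\{r,\dim_\CC C[W]_{(\mathbf{u},\mathbf{0})}\} = h_{r,Z}(\mathbf{u})$. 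So the assignment $J\mapsto J\cap C[Z]$ is well-defined on closed points.

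Next I would promote this to a morphism of schemes. The universal property of the multigraded Hilbert scheme says that for any commutative ring $R$, an $R$-point of $\Hilb_{C[W]}^{h_{r,W}}$ is a $\ZZ^{k+l}$-homogeneous ideal $\widetilde J\subseteq C[W]\otimes_{\CC} R$ such that each graded piece of the quotient is locally free over $R$ of rank $h_{r,W}$. Setting $\widetilde J\cap (C[Z]\otimes_{\CC} R)$ produces a $\ZZ^k$-homogeneous ideal whose $\mathbf{u}$-graded piece equals $\widetilde J_{(\mathbf{u},\mathbf{0})}$ by the same identity, which is locally free of rank $h_{r,Z}(\mathbf{u})$ by flatness of $\widetilde J$. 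This construction is functorial in $R$, so by the universal property we obtain the desired morphism $\Hilb_{C[W]}^{h_{r,W}}\to \Hilb_{C[Z]}^{h_{r,Z}}$.

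For the $\mathrm{Slip}$ statement I would argue by irreducibility and density. If $J$ is the $B_W$-saturated ideal of $r$ distinct points $(z_1,y_1),\ldots,(z_r,y_r)$ with the $z_i$ pairwise distinct, then $J\cap C[Z]$ is the $B_Z$-saturated ideal of $\{z_1,\ldots,z_r\}$, hence lies in $\mathrm{Slip}_{r,Z}$. Since such $J$ form a dense subset of $\mathrm{Slip}_{r,W}$ and the morphism is continuous with $\mathrm{Slip}_{r,Z}$ closed, the image of $\mathrm{Slip}_{r,W}$ is contained in $\mathrm{Slip}_{r,Z}$. Conversely, given any reduced ideal $I\in \mathrm{Slip}_{r,Z}$ of $r$ distinct points in $Z$, we can lift it by choosing any $r$ points in $W$ projecting to them, obtaining $J\in\mathrm{Slip}_{r,W}$ with $J\cap C[Z]=I$. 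Projectivity of the multigraded Hilbert scheme makes the morphism proper, so the image of $\mathrm{Slip}_{r,W}$ is closed and irreducible and contains the dense locus of reduced ideals in $\mathrm{Slip}_{r,Z}$. Therefore the image equals $\mathrm{Slip}_{r,Z}$.

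The main obstacle is not any single step but the bookkeeping for the families argument: one must check that intersection with $C[Z]\otimes R$ commutes with base change and preserves the flatness condition on each graded piece. This reduces to the grading identity $(\widetilde J\cap (C[Z]\otimes R))_{\mathbf{u}}=\widetilde J_{(\mathbf{u},\mathbf{0})}$, which holds on the nose because the decomposition $C[W]_{(\mathbf{u},v)}=C[Z]_{\mathbf{u}}\otimes C[\PP^{m_1}\times\cdots\times\PP^{m_l}]_v$ is compatible with any base change, so the only genuine check is a trivial linear-algebra one and the result follows.
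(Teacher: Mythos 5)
The paper does not prove this proposition: it is quoted verbatim from \cite[Corollary~2.7]{Man23}, so there is no internal argument to compare against, and your attempt has to be judged on its own. The construction of the morphism is correct: the identification $C[Z]_{\mathbf{u}}=C[W]_{(\mathbf{u},\mathbf{0})}$, the resulting identity $(J\cap C[Z])_{\mathbf{u}}=J_{(\mathbf{u},\mathbf{0})}$, the computation $h_{r,W}(\mathbf{u},\mathbf{0})=h_{r,Z}(\mathbf{u})$, and the functor-of-points upgrade all go through (each graded piece of the new quotient \emph{is} a graded piece of the old one, so local freeness and compatibility with base change are inherited). The containment of the image of $\mathrm{Slip}_{r,W}$ in $\mathrm{Slip}_{r,Z}$ is also fine: for the saturated ideal $J$ of $r$ distinct points, $J\cap C[Z]$ is the saturated ideal of the projected points, which are automatically distinct and in general position because $J\cap C[Z]$ has Hilbert function $h_{r,Z}$; continuity plus closedness of $\mathrm{Slip}_{r,Z}$ finishes that half.

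The genuine gap is in surjectivity. The claim that, given $r$ general points of $Z$, \emph{any} $r$ points of $W$ projecting to them yield an ideal in $\mathrm{Slip}_{r,W}$ is false: lift all the points into $Z\times\{y\}$ for a single $y$ and, already in degrees supported on the new factors, the configuration fails to impose independent conditions, so its saturated ideal is not even a closed point of $\Hilb_{C[W]}^{h_{r,W}}$. Replacing ``any'' by ``general'' still leaves something to prove, namely that a general lift of a \emph{fixed} general configuration in $Z$ has Hilbert function $h_{r,W}$; this nonemptiness is not covered by \cite[Lemma~3.9]{bb19} because the $Z$-coordinates are constrained. The cleanest repair runs the density argument the other way: let $U\subseteq W^{\times r}$ be the nonempty open locus of tuples with Hilbert function $h_{r,W}$; the projection $W^{\times r}\to Z^{\times r}$ is open, so its image $p(U)$ is open and dense, every tuple in $p(U)$ has its saturated ideal in the image of $\mathrm{Slip}_{r,W}$, and such ideals form a dense subset of $\mathrm{Slip}_{r,Z}$. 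Combined with the closedness of the image, which you already have from properness, this yields surjectivity. With that substitution your argument is complete.
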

	
	In our case $C[Y]=V\subseteq C[X]=S$ can be realized by $\beta_j\mapsto \alpha_{1,j}$, and then $J\cap C[Y]=J\cap V$ becomes $\rho(J)$. 
	
	We use $\rho$ as well as $\Upsilon$ to characterize concise symmetric tensors for which symmetric border rank is equal to border rank in terms of existence of ideals with special properties in their border varieties of sums of powers.
	
	\begin{theorem}\label{thm:equality_of_br_in_terms_of_vspb}
		Let $n\leq r \leq \binom{n+1}{2}$. Assume that a concise symmetric tensor  $F$ in $(\CC^n)^{\otimes d}$ has border rank $r$. The following conditions are equivalent:
		\begin{enumerate}
			\item[(i)] the corresponding homogeneous polynomial $p_F$ has symmetric border rank $r$;
			\item[(ii)] there exists $J \in \VSPb(F,r)$ such that $I_R\subseteq J$ and $\pi(J_{\mathbf{u}}) = \pi(J_{\mathbf{v}})$ for all $\mathbf{u},\mathbf{v}\in \ZZ^d$ with $|\mathbf{u}| = |\mathbf{v}|$;
			\item[(iii)] there exists $J \in \VSPb(F,r)$ such that $\pi(J_{(d,0,\ldots, 0)}) \subseteq \pi(J_{\mathbf{1}})$.
		\end{enumerate}
		Furthermore, if these conditions hold, then $\rho(J) \in \VSPb(p_F, r)$.
	\end{theorem}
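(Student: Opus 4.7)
The plan is to prove the cycle of implications (i) $\Rightarrow$ (ii) $\Rightarrow$ (iii) $\Rightarrow$ (i), and to obtain the ``furthermore'' statement as a byproduct of the last one. Since the inequality $\brk(F)\le\brk_S(p_F)$ holds in general and $\brk(F)=r$ by hypothesis, condition (i) is equivalent, via Theorem~\ref{mainbb}, to the nonemptiness of $\VSPb(p_F,r)$, while (ii) and (iii) concern specific ideals in $\VSPb(F,r)$, which is nonempty by Theorem~\ref{mainbb} applied to $F$.

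For (i) $\Rightarrow$ (ii), I would apply Theorem~\ref{mainbb} to $p_F$ to obtain $I\in\VSPb(p_F,r)$, and set $J:=\Upsilon(I)$. Theorem~\ref{theo:1} gives $J\in\VSPb(F,r)$; the defining formula $\Upsilon(I) = I_R+\bigoplus_{\mathbf{u}}\psi_{\mathbf{u}}(I_{|\mathbf{u}|})$ immediately shows $I_R\subseteq J$. For each $\mathbf{u}\in\ZZ^d$, Lemma~\ref{lem:ker_of_pi} implies that $\pi$ vanishes on $(I_R)_{\mathbf{u}}$, and the monomial formulas defining $\pi$ and $\psi_{\mathbf{u}}$ make it evident that $\pi\circ\psi_{\mathbf{u}}=\mathrm{id}_{V_{|\mathbf{u}|}}$. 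Hence $\pi(J_{\mathbf{u}})=I_{|\mathbf{u}|}$, which depends only on $|\mathbf{u}|$, proving (ii). The implication (ii) $\Rightarrow$ (iii) is immediate by specializing to $\mathbf{u}=(d,0,\ldots,0)$ and $\mathbf{v}=\mathbf{1}$.

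For (iii) $\Rightarrow$ (i) together with the ``furthermore'' statement, I would show that $\rho(J)\in\VSPb(p_F,r)$, which by Theorem~\ref{mainbb} forces $\brk_S(p_F)\le r$ and hence equality. By Proposition~\ref{prop:map_of_Slips} applied to $V\cong\CC[\alpha_{1,j}\mid 1\le j\le n]\subseteq S$, the assignment $J\mapsto J\cap V = \rho(J)$ sends $\mathrm{Slip}_{r,X}$ into $\mathrm{Slip}_{r,Y}$, so $\rho(J)\in\mathrm{Slip}_{r,Y}$ with Hilbert function $h_{r,Y}$. By Proposition~\ref{prop:inclusion in onedeg}, the condition $\rho(J)\subseteq \Ann(p_F)$ reduces to the degree $d$ check $\rho(J)_d\subseteq \Ann(p_F)_d$. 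Since $\rho$ kills every $\alpha_{i,j}$ with $i\ge 2$, only the summand $J_{(d,0,\ldots,0)}$ contributes to $\rho(J)_d$, and $\rho$ coincides with $\pi$ on $S_{(d,0,\ldots,0)}$; therefore $\rho(J)_d=\pi(J_{(d,0,\ldots,0)})$. Hypothesis (iii) places this inside $\pi(J_{\mathbf{1}})$, and from $J_{\mathbf{1}}\subseteq\Ann(F)_{\mathbf{1}}$ together with Lemma~\ref{lem:image_of_degree_111} one obtains $\pi(J_{\mathbf{1}})\subseteq\Ann(p_F)_d$, as required.

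The main obstacle lies in the graded bookkeeping for (iii) $\Rightarrow$ (i): one has to match $\rho(J)_d$ exactly with $\pi(J_{(d,0,\ldots,0)})$ by carefully tracking which multidegrees of $J$ survive under $\rho$, and one has to reconcile the ring-surjection presentation of $\rho$ with the subring-intersection picture that underlies Proposition~\ref{prop:map_of_Slips}. Once those identifications are made, the rest consists of direct applications of Lemmas~\ref{lem:ker_of_pi} and~\ref{lem:image_of_degree_111} and of the morphisms built in Sections~\ref{sec:pi}--\ref{sec:rho}.
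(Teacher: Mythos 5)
Your proposal is correct and follows essentially the same route as the paper: (i)$\Rightarrow$(ii) via $\Upsilon$ and Theorem~\ref{theo:1}, (ii)$\Rightarrow$(iii) trivially, and (iii)$\Rightarrow$(i) by showing $\rho(J)\in\VSPb(p_F,r)$ using Proposition~\ref{prop:map_of_Slips} and the chain $\rho(J)_d=\pi(J_{(d,0,\ldots,0)})\subseteq\pi(J_{\mathbf{1}})\subseteq\Ann(p_F)_d$. The only differences are cosmetic: you spell out why $\Upsilon(I)$ satisfies (ii) (via $\pi\circ\psi_{\mathbf{u}}=\mathrm{id}$) and invoke the containment Lemma~\ref{lem:image_of_degree_111} where the paper uses the equality Lemma~\ref{lem:image_is_equal}.
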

	\begin{proof}
		Assume that $(i)$ holds. By Theorem~\ref{mainbb} there exists $I\in \VSPb(p_F,r)$. Then, by Theorem~\ref{theo:1}, the ideal $J = \Upsilon(I)$ is as in $(ii)$.
		The implication $(ii) \Rightarrow (iii)$ is clear. Assume that $(iii)$ holds. 
		By Proposition~\ref{prop:map_of_Slips} the ideal $I:=\rho(J)$ is in $\mathrm{Slip}_{r, Y}$. Furthermore, 
		\[
		I_d = \rho(J_{(d,0,\ldots, 0)}) = \pi(J_{(d,0,\ldots, 0)}) \subseteq  \pi(J_{\mathbf{1}}) \subseteq \pi(\Ann(F)_{\mathbf{1}}) = \Ann(p_F)_d
		\]
		where the last equality follows from Lemma~\ref{lem:image_is_equal}. It follows that $I\in \VSPb(p_F, r)$ so by Theorem~\ref{mainbb} the symmetric border rank of $p_F$ is at most $r$ and thus it is equal to $r$.
	\end{proof}
	
	Next we exhibit three situations in which statement $(iii)$ of Theorem~\ref{thm:equality_of_br_in_terms_of_vspb} is satisfied.
	
	\begin{corollary}\label{cor:nleq d+1}
		If a concise symmetric tensor $F\in (\CC^n)^{\otimes d}$ has border rank $n$ and $n\leq d+1$, then $p_F$ has minimal symmetric border rank.
	\end{corollary}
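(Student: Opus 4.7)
The plan is to verify condition (iii) of Theorem~\ref{thm:equality_of_br_in_terms_of_vspb}: to exhibit $J \in \VSPb(F,n)$ satisfying $\pi(J_{(d,0,\ldots,0)}) \subseteq \pi(J_{\mathbf{1}})$. By Theorem~\ref{mainbb} such a $J$ exists. The first step is to show $I_R \subseteq J$. Following the argument in the proof of Corollary~\ref{prop:border_ranks_agree}: for every $\mathbf{v} \in \mathcal{A}$ the Hilbert function of $S/J$ gives $\dim(S/J)_{\mathbf{v}} = n$; conciseness and minimal border rank of $F$ give $\dim(S/\Ann(F))_{\mathbf{v}} = n$; combined with $J \subseteq \Ann(F)$, this forces $J_{\mathbf{v}} = \Ann(F)_{\mathbf{v}}$. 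Since $F$ is symmetric, $I_R \subseteq \Ann(F)$, and as $I_R$ is generated in the degrees from $\mathcal{A}$, we obtain $I_R \subseteq J$.

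Next, Lemma~\ref{lem:ker_of_pi} gives $\dim \pi(J_{\mathbf{u}}) = \dim V_{|\mathbf{u}|} - n$ for every $\mathbf{u}$ with $|\mathbf{u}| \geq 1$. In particular $\pi(J_{(d,0,\ldots,0)})$ and $\pi(J_{\mathbf{1}})$ are both codimension-$n$ subspaces of $V_d$, so condition~(iii) is equivalent to their equality. Setting $I := \rho(J)$, which belongs to $\mathrm{Slip}_{n,Y}$ by Proposition~\ref{prop:map_of_Slips}, the fact that $\rho$ and $\pi$ agree on $S_{(d,0,\ldots,0)}$ yields $I_d = \pi(J_{(d,0,\ldots,0)})$. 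It therefore suffices to prove the containment $I_d \subseteq \Ann(p_F)_d$: by Proposition~\ref{prop:inclusion in onedeg} this gives $I \subseteq \Ann(p_F)$, hence $I \in \VSPb(p_F,n)$ and $\brk_S(p_F) \leq n$ by Theorem~\ref{mainbb}.

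The hypothesis $n \leq d+1$ is to be used in proving $I_d \subseteq \Ann(p_F)_d$. The idea is to exploit the isomorphism $S/I_R \cong \bigoplus_{\mathbf{u}} V_{|\mathbf{u}|}$ encoded in Lemma~\ref{lem:ker_of_pi}, which presents $J/I_R$ as a family of subspaces $\pi(J_{\mathbf{u}}) \subseteq V_{|\mathbf{u}|}$ compatible with the multiplication in $V$. The apolarity $J \subseteq \Ann(F)$ gives the direct constraint $\pi(J_{\mathbf{1}}) \subseteq \Ann(p_F)_d$ via Lemma~\ref{lem:image_is_equal}. When $d \geq n-1$, the ideal $I = \rho(J) \subseteq V$ (having Hilbert function $(1,n,n,\ldots)$) has sufficiently low regularity that its degree-$d$ part is generated by multiplication from lower-degree pieces, and the plan is to use this to transport the apolarity constraint from multidegree $\mathbf{1}$ to multidegree $(d,0,\ldots,0)$.

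The main obstacle will be making this transport fully rigorous. Concretely, the goal is to show that $\pi(J_{\mathbf{u}})$ depends only on $|\mathbf{u}|$, equivalently that $J$ lies in the image of $\Upsilon\colon \mathrm{Slip}_{n,Y} \to \mathrm{Slip}_{n,X}$ from Theorem~\ref{theo:1}; the bound $n \leq d+1$ is expected to be precisely what guarantees the equalisation of the various pieces $\pi(J_{\mathbf{u}})$ for $\mathbf{u}$ with $|\mathbf{u}| = d$. Once this is achieved one has $J = \Upsilon(I)$, and then $\pi(J_{\mathbf{1}}) = I_d \subseteq \Ann(p_F)_d$ follows from the computation $\Upsilon(I)_{\mathbf{1}} = (I_R)_{\mathbf{1}} + \psi_{\mathbf{1}}(I_d)$ together with $J \subseteq \Ann(F)$.
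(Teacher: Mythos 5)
Your setup is correct and coincides with the paper's: you reduce to condition (iii) of Theorem~\ref{thm:equality_of_br_in_terms_of_vspb}, you correctly establish $I_R\subseteq J$ for any $J\in\VSPb(F,n)$ (via $J_{\mathbf v}=\Ann(F)_{\mathbf v}$ for $\mathbf v\in\mathcal A$), and you correctly observe via Lemma~\ref{lem:ker_of_pi} that $\pi(J_{(d,0,\ldots,0)})$ and $\pi(J_{\mathbf 1})$ are both codimension-$n$ subspaces of $V_d$, so (iii) amounts to their equality. But the proof stops there: the last two paragraphs are a plan, not an argument. You never actually derive $\pi(J_{(d,0,\ldots,0)})\subseteq\pi(J_{\mathbf 1})$, and you say yourself that ``the main obstacle will be making this transport fully rigorous.'' The route you sketch --- showing that $\pi(J_{\mathbf u})$ depends only on $|\mathbf u|$, i.e.\ that $J$ lies in the image of $\Upsilon$ --- is condition (ii) of the theorem, which is at least as strong as (iii); asserting that $n\leq d+1$ ``is expected to be precisely what guarantees'' this is exactly the point that needs proof. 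The only a priori relations among the spaces $\pi(J_{\mathbf u})$ with fixed $|\mathbf u|$ are that they all have codimension $n$ and that $V_k\cdot\pi(J_{\mathbf u})\subseteq\pi(J_{\mathbf u+\mathbf v})$ for $|\mathbf v|=k$; nothing in your text turns these into the required equality, and the appeal to ``sufficiently low regularity'' of $\rho(J)$ is not connected to a concrete statement.

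The missing mechanism, as the paper implements it, is Macaulay's growth bound applied to an auxiliary ideal of $V$. Suppose $\pi(J_{(d,0,\ldots,0)})\not\subseteq\pi(J_{\mathbf 1})$ and let $K\subseteq V$ be the ideal generated by $\pi(J_{\mathbf 1})$, $\pi(J_{(d,0,\ldots,0)})$ and $\pi(J_{(d,1,\ldots,1)})$. Then $\HF(V/K,d)\leq n-1\leq d$, and since $(n-1)^{\langle a\rangle}=n-1$ for $a\geq n-1$, Macaulay's bound forces $\HF(V/K,a)\leq n-1$ for all $a\geq d$. On the other hand, because $J$ is an ideal one has $V_{d-1}\cdot\pi(J_{\mathbf 1})\subseteq\pi(J_{(d,1,\ldots,1)})$ and $V_{d-1}\cdot\pi(J_{(d,0,\ldots,0)})\subseteq\pi(J_{(d,1,\ldots,1)})$, so $K_{2d-1}=\pi(J_{(d,1,\ldots,1)})$, which has codimension exactly $n$ in $V_{2d-1}$ by Lemma~\ref{lem:ker_of_pi}; this contradiction is where $n\leq d+1$ is genuinely used. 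Your write-up contains none of this (no auxiliary ideal, no growth bound, no use of the multidegree $(d,1,\ldots,1)$), so as it stands the proof has a genuine gap at its central step.
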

	\begin{proof}
		Let $J\in \VSPb(F,n)$. By Theorem~\ref{thm:equality_of_br_in_terms_of_vspb} it is enough to show that $\pi(J_{(d,0,\ldots, 0)}) \subseteq \pi(J_{\mathbf{1}})$.
		Assume by contradiction that $\pi(J_{(d,0,\ldots, 0)}) \not\subseteq \pi(J_{\mathbf{1}}) $. Let $K\subset V$ be the ideal generated by 
		$\pi(J_{\mathbf{1}})$, $\pi(J_{(d,0,\ldots, 0)})$ and $\pi(J_{(d,1,1,\ldots, 1)})$. The Hilbert function of $V/K$ in degree $d$ is at most $n-1 \leq d$. Macaulay's bound \cite[Theorem~4.2.10]{BH98} states the Hilbert function $\HF(V/K, a)$ is bounded from the above by $\HF(V/K,a-1)^{\langle a-1\rangle}$ for all $a\geq d$,
		where the latter is the $(a-1)$-th Macaulay representation of the integer $\HF(V/K,a-1)$. Note that $(n-1)^{\langle d\rangle}=n-1$, as $n-1\leq d$. 
		Thus $\HF(V/K, a)$ is bounded from the above by $n-1$ for all $a\geq d$. 
		
		Since $F$ is concise symmetric and of minimal border rank, we have $I_R\subseteq J$. In particular, by Lemma~\ref{lem:ker_of_pi}, the codimension of $\pi(J_{\mathbf{u}})$ in $V_{|\mathbf{u}|}$ is equal to 
		$n$ for every nonzero $\mathbf{u}\in \ZZ_{\geq 0}^n$. Note that $K_{2d-1} = \pi(J_{(d,1,\ldots, 1)})$ by construction. Therefore, by the above, $\HF(V/K,2d-1) = n$. This contradiction finishes the proof.
	\end{proof}

	\begin{corollary}\label{cor:special_case_nonwil}
		If a concise symmetric tame tensor $F\in (\CC^n)^{\otimes d}$ has minimal border rank, then $p_F$ has minimal symmetric smoothable rank. In particular, $p_F$ has minimal symmetric border rank.
	\end{corollary}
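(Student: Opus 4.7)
The plan is to take the smoothable length-$n$ scheme witnessing tameness of $F$ in $X$, argue that symmetry and conciseness force it to sit on the diagonal $\Delta(Y) \subset X$, and then push it down to a smoothable length-$n$ scheme in $Y$ via the morphism $\rho$ from Proposition \ref{prop:map_of_Slips}.

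By tameness, fix a smoothable scheme $Z\subset X$ of length $n$ with $J := I_Z^{\mathrm{sat}} \subseteq \Ann(F)$; then $J\in \VSPb(F,n)$. Exactly as in the last paragraph of the proof of Corollary \ref{prop:border_ranks_agree}, conciseness gives $J_\mathbf{v} = \Ann(F)_\mathbf{v}$ for all $\mathbf{v}\in \mathcal A$, and symmetry of $F$ gives $I_R\subseteq \Ann(F)$; since $I_R$ is generated in these bidegrees, $I_R\subseteq J$. Geometrically this means $Z$ is scheme-theoretically contained in the diagonal $\Delta(Y)\subset X$, so it corresponds to a length-$n$ subscheme $Z'\subset Y$ under the diagonal embedding $Y\hookrightarrow X$.

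Next, apply $\rho$. By Proposition \ref{prop:map_of_Slips} the ideal $\rho(J)$ lies in $\mathrm{Slip}_{n,Y}$, and the $B_X$-saturation of $J$ (in particular saturation with respect to $(\alpha_{1,1},\ldots, \alpha_{1,n})$) gives $B_Y$-saturation of $\rho(J)$, so $\rho(J)=I_{Z'}^{\mathrm{sat}}$ is the saturated ideal of a smoothable length-$n$ scheme in $Y$. To conclude $\mathrm{srk}_S(p_F)\leq n$ it remains to show $\rho(J)\subseteq \Ann(p_F)$; by Proposition \ref{prop:inclusion in onedeg} it suffices to check the inclusion in degree $d$. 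Now $\rho(J)_d = \pi(J_{(d,0,\ldots,0)})$, and because $Z\subset \Delta(Y)$, for any $f\in S_\mathbf{u}$ vanishing of $f$ on $Z$ is equivalent to vanishing of $\pi(f)\in V_{|\mathbf{u}|}$ on $Z'$. Hence $\pi(J_\mathbf{u})$ depends only on $|\mathbf{u}|$, and in particular $\pi(J_{(d,0,\ldots,0)})=\pi(J_\mathbf{1})\subseteq \pi(\Ann(F)_\mathbf{1})=\Ann(p_F)_d$ by Lemma \ref{lem:image_is_equal}.

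Combining with the trivial lower bound $\mathrm{srk}_S(p_F)\geq \brk_S(p_F)\geq \brk(F)=n$ yields $\mathrm{srk}_S(p_F)=n$. The main subtlety is the step checking the annihilator condition: once $I_R\subseteq J$, vanishing of multihomogeneous forms on $Z$ factors through the diagonal and is entirely controlled by $\pi$, so condition $(ii)$ of Theorem \ref{thm:equality_of_br_in_terms_of_vspb} holds automatically in the tame case; what upgrades the conclusion from symmetric border rank to symmetric smoothable rank is that $\rho$ already produces a bona fide saturated ideal in $\mathrm{Slip}_{n,Y}$, not merely an ideal in $\VSPb(p_F,n)$.
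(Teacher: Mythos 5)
Your argument is correct and follows the same overall architecture as the paper's proof: take the $B_X$-saturated ideal $J$ of the smoothable length-$n$ scheme apolar to $F$, deduce $I_R\subseteq J$ from conciseness and symmetry, push down via $\rho$, and reduce everything to the inclusion $\pi(J_{(d,0,\ldots,0)})\subseteq \Ann(p_F)_d$. Where you genuinely diverge is in how that inclusion is established. The paper proves $\pi(J_{\mathbf{1}})\subseteq \pi(J_{(d,0,\ldots,0)})$ by a purely algebraic manipulation: given $\theta\in J_{\mathbf{1}}$ it lifts $\pi(\theta)$ to $f=\tau(\pi(\theta))\in S_{(d,0,\ldots,0)}$, uses $I_R\subseteq J$ to see that $f\cdot B_X\subseteq J$, invokes $B_X$-saturation to get $f\in J$, and then concludes equality of the two images by the codimension count of Lemma~\ref{lem:ker_of_pi}. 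You instead observe that $I_R\subseteq J$ with $J$ saturated means $Z$ lies scheme-theoretically on the diagonal, so for every $\mathbf{u}$ the restriction map $S_{\mathbf{u}}\to H^0(\mathcal{O}_Z(\mathbf{u}))$ factors through $\pi$, whence $\pi(J_{\mathbf{u}})=(I_{Z'})_{|\mathbf{u}|}$ depends only on $|\mathbf{u}|$. This is a clean geometric shortcut; note that it leans on two facts you should make explicit: that $I_R$ (the $2\times 2$ minors of a generic $d\times n$ matrix) is the \emph{full} saturated prime ideal of the diagonal, and that $J_{\mathbf{u}}$ coincides in every nonnegative multidegree with the space of forms vanishing on $Z$ because $J$ is saturated. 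The paper's algebraic version is essentially the hands-on verification of exactly this factorization, so the two arguments buy the same thing, with yours more conceptual and the paper's more self-contained. One small point: your opening assertion that $J\in\VSPb(F,n)$ requires both that $\HF(S/J)=h_{n,X}$ (which follows from $J\subseteq\Ann(F)$ and conciseness) and that $J\in\mathrm{Slip}_{n,X}$, the latter resting on the surjectivity of $\mathrm{Slip}_{n,X}\to\mathrm{Hilb}^n_{sm}(X)$ cited by the paper from \cite{MV23}; you need this if you want to quote Proposition~\ref{prop:map_of_Slips} for $\rho(J)\in\mathrm{Slip}_{n,Y}$, although for the smoothable-rank conclusion alone your direct identification of $\rho(J)$ with $I_{Z'}$ for the abstractly smoothable $Z'\cong Z$ already suffices.
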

	\begin{proof}
		Let $J$ be a $B_X$-saturated ideal of a length $n$ smoothable scheme apolar to $F$. From $J\subseteq \Ann(F)$ one concludes that $S/J$ has Hilbert function $h_{n,X}$. Notice that this condition implies $I_R\subseteq J$. 
		Since the natural map $\mathrm{Slip}_{n, X}\to \mathrm{Hilb}_{sm}^n(X)$ is surjective (see \cite[Proposition~3.17]{MV23}), we conclude that $J\in \mathrm{Slip}_{n,X}$.
		
		By Proposition~\ref{prop:map_of_Slips}, $\rho(J)$ is in $\mathrm{Slip}_{n, Y}$ and it is in fact saturated (see \cite[Proposition~2.1]{Man23} for details).
		So it is enough to show that $\rho(J)\subseteq \Ann(p_F)$ to conclude that the smoothable rank of $p_F$ satisfies $\mathrm{srk}_S(p_F)\leq n$, and thus is equal to $n$.
		By Theorem~\ref{thm:equality_of_br_in_terms_of_vspb}, to demonstrate this claim, it is enough to show that $\pi(J_{(d,0,\ldots, 0)}) \subseteq \pi(J_{\mathbf{1}})$.
		
		By Lemma~\ref{lem:ker_of_pi} both of these subspaces of $V_d$ are of codimension $n$. It is enough to show that if $F\in J_{\mathbf{1}}$, then $\pi(F) \in \pi(J_{(d,0,\ldots, 0)})$.
		
		Let $\tau\colon V\hookrightarrow S$ be the injective ring map $\beta_j\mapsto \alpha_{1,j}$ for all $1\leq j\leq n$ and let $f=\tau(\pi(F))$. To derive the above conclusion, it is enough to verify that $f\in J$: this would imply that $\pi(F) = \pi(f) \in  \pi(J_{(d,0,\ldots,0)})$.
		
		For every $1\leq j_1,\ldots, j_d\leq n$, $\pi(F\alpha_{1,j_1}\cdots \alpha_{1,j_d}) = \pi(f \alpha_{1,j_1}\cdots \alpha_{d,j_d})$. From Lemma~\ref{lem:ker_of_pi}, we see that 
		$F\alpha_{1,j_1}\cdots \alpha_{1,j_d} - f \alpha_{1,j_1}\cdots \alpha_{d,j_d} \in I_R \subseteq J$ and hence, $f\alpha_{1,j_1}\cdots \alpha_{d,j_d}$ in $J$.
		Since $J$ is $B_X$-saturated we conclude that $f\in J$.
	\end{proof}
	
	\begin{remark}
		Let $F$ be a concise tame $3$-tensor of minimal border rank. Then $F$ is symmetric and the equality $\brk(F)=\brk_S(p_F)=n$ can be proven using results of Jelisiejew, Landsberg and Pal \cite{JLP24}. By \cite[Theorem 9.2]{JLP24}, since $F$ has minimal smoothable rank, then $F$ is $1$-generic, $111$-sharp and its $111$-algebra is smoothable and Gorenstein. Moreover, they prove that its $111$-algebra is such that its spectrum lays on the third Veronese variety and spans $p_F$: this implies that the symmetric smoothable rank of $p_F$ is minimal and the desired conclusion follows. Note that when $F$ is a $1$-generic $3$-tensor, the validity of the border Comon's conjecture was already known \cite[Proposition 5.6.1.6]{Lands2017}.
	\end{remark}

	The last family of tensors for which part $(iii)$ of Theorem~\ref{thm:equality_of_br_in_terms_of_vspb} is satisfied is a very interesting one. 
	We introduce the family of {\it sharp} $d$-tensors generalizing (see Proposition~\ref{prop:generalizes} below) the one of $111$-sharp concise $3$-tensors, introduced in  \cite{JLP24}, when we look at symmetric minimal border rank tensors.
	
	\begin{definition}[{\bf $111$-sharpness}]\label{def:111sharp}
		A concise tensor $F\in(\CC^n)^{\otimes 3}$ is \emph{$111$-sharp} if its annihilator $\Ann(F)$ has exactly $n-1$ minimal generators of multidegree $(1,1,1)$.
	\end{definition}
	
	A concise minimal border rank tame $3$-tensor is $111$-sharp \cite[Theorem~9.2]{JLP24} so this family generalizes the one considered in Corollary~\ref{cor:special_case_nonwil} when $d=3$. 
	
	\begin{definition}[{\bf Sharp tensors}]\label{def:sharp}
		Let $F\in (\CC^n)^{\otimes d}$ with $d\geq 3$. We say that $F$ is {\it sharp} if 
		\begin{enumerate}
			\item\label{it:wild_1} $\Ann(F)$ has $n-1$ minimal generators of degree $\mathbf{1}$;
			\item\label{it:wild_2} $\mathrm{HF} (S/\Ann(F), \mathbf{u}) = n$ for all $0 < \mathbf{u} < \mathbf{1}$;
			\item\label{it:wild_3} $\mathrm{HF} (S/(\Ann(F)_{\mathbf{e}_i + \mathbf{e}_j}), s\mathbf{e}_i + \mathbf{e}_j) = n$ for all $1\leq i \neq j \leq n$ and all $1\leq s \leq d-1$.
		\end{enumerate}
	\end{definition}

	We need the following corollary of Lemma~\ref{lem:ker_of_pi}.

	\begin{corollary}\label{cor:image_of_degree_1-e_d}
		Let $F$ be a sharp symmetric concise minimal border rank tensor in $(\CC^n)^{\otimes d}$ for some $d\geq 3$.
		$\pi(\Ann(F)_{\mathbf{1} - \mathbf{e}_d}) = \Ann(p_F)_{d-1}$. 
	\end{corollary}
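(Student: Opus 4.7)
The plan is to establish both inclusions of the claimed equality by the same two-step strategy used to pass from Lemma \ref{lem:image_of_degree_111} to Lemma \ref{lem:image_is_equal}: first prove the straightforward inclusion $\pi(\Ann(F)_{\mathbf{1}-\mathbf{e}_d}) \subseteq \Ann(p_F)_{d-1}$, and then verify that the two subspaces have the same codimension $n$ in $V_{d-1}$, forcing equality.

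For the inclusion, I would argue as follows. Let $\theta \in \Ann(F)_{\mathbf{1} - \mathbf{e}_d}$. For each $1 \leq k \leq n$ the element $\alpha_{d,k}\theta$ lies in $\Ann(F)_{\mathbf{1}}$, and then by Lemma \ref{lem:image_of_degree_111} one gets $\beta_k \pi(\theta) = \pi(\alpha_{d,k}\theta) \in \Ann(p_F)_d$. Now $\pi(\theta)\circ p_F \in P_1$ is a linear form all of whose partial derivatives $\beta_k \circ (\pi(\theta)\circ p_F) = (\beta_k \pi(\theta))\circ p_F$ vanish, so it must be zero, giving $\pi(\theta) \in \Ann(p_F)_{d-1}$.

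For the left-hand codimension, since $F$ is symmetric we have $I_R\subseteq \Ann(F)$, so in particular $(I_R)_{\mathbf{1}-\mathbf{e}_d}\subseteq \Ann(F)_{\mathbf{1}-\mathbf{e}_d}$. Applying Lemma \ref{lem:ker_of_pi} with $J = \Ann(F)$ and $\mathbf{u} = \mathbf{1}-\mathbf{e}_d$ gives
\[
\dim_{\CC} V_{d-1}/\pi(\Ann(F)_{\mathbf{1}-\mathbf{e}_d}) \;=\; \HF(S/\Ann(F),\mathbf{1}-\mathbf{e}_d).
\]
Since $d\geq 3$, the tuple $\mathbf{1}-\mathbf{e}_d$ is a nonzero $0$-$1$ vector strictly less than $\mathbf{1}$, so condition \eqref{it:wild_2} in Definition \ref{def:sharp} gives $\HF(S/\Ann(F),\mathbf{1}-\mathbf{e}_d) = n$.

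For the right-hand codimension, I would use that the apolar algebra $V/\Ann(p_F)$ is Artinian Gorenstein of socle degree $d$, so that its Hilbert function is symmetric: $\HF(V/\Ann(p_F),d-1) = \HF(V/\Ann(p_F),1)$. Conciseness of $F$ translates to injectivity of $V_1 \to P_{d-1}$, $\ell \mapsto \ell\circ p_F$, hence $\Ann(p_F)_1 = 0$ and so $\HF(V/\Ann(p_F),1) = n$. Combining, $\dim_{\CC} V_{d-1}/\Ann(p_F)_{d-1} = n$, which matches the codimension computed above. The inclusion of subspaces of equal codimension must be an equality, finishing the proof. There is no real obstacle here; the only substantive ingredient beyond earlier lemmas is invoking Gorenstein symmetry of the apolar algebra to compare $\HF$ in degree $d-1$ to degree $1$.
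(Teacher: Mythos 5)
Your proof is correct and follows essentially the same route as the paper: establish the inclusion $\pi(\Ann(F)_{\mathbf{1}-\mathbf{e}_d})\subseteq \Ann(p_F)_{d-1}$ by multiplying by $\alpha_{d,j}$ and invoking the degree-$\mathbf{1}$ statement, then force equality by showing both subspaces have codimension $n$ in $V_{d-1}$ via Lemma~\ref{lem:ker_of_pi}. The only cosmetic difference is in the bookkeeping of those codimensions: the paper deduces both directly from conciseness of $F$ and of $p_F$ (implicitly using the catalecticant/Gorenstein symmetry you make explicit), whereas you invoke condition~\eqref{it:wild_2} of Definition~\ref{def:sharp} for the left-hand side; either justification is valid.
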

	\begin{proof}
		Since $F$ is concise, so is $p_F$. So $\dim_\CC (V/\Ann(p_F))_{d-1} = n = \dim_\CC (S/\Ann(F))_{\mathbf{1}-\mathbf{e}_d}$.
		Since $I_R \subseteq \Ann(F)$, by Lemma~\ref{lem:ker_of_pi} both of the subspaces of $V_{d-1}$ in the statement are of codimension $n$.
		Therefore, it is sufficient to check that $\pi(\Ann(F)_{\mathbf{1} - \mathbf{e}_d}) \subseteq \Ann(p_F)_{d-1}$.
		
		If $\theta\in \Ann(F)_{\mathbf{1}-\mathbf{e}_d}$, then $\alpha_{d, j}\theta\in\Ann(F)_{\mathbf{1}}$ for all $j\in \{1,\ldots,n\}$. 
		By Lemma~\ref{lem:image_is_equal}, for every $j\in \{1,2,\ldots, n\}$ one has
		$\beta_j\pi(\theta) \in \Ann(p_F)_d$. We conclude that $\pi(\theta) \in \Ann(p_F)_{d-1}$ and hence $\pi(\Ann(F)_{\mathbf{1}-\mathbf{e}_d}) \subseteq \Ann(p_F)_{d-1}$.
	\end{proof}

	\begin{lemma}\label{lem:also_sharp_general}
		The number of minimal generators of $\Ann(p_F)$ of degree $d$ is the same as the number of minimal generators of $\Ann(F)$ of degree $\mathbf{1}$.
	\end{lemma}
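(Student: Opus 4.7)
The plan is to reinterpret both generator counts as dimensions of quotients and show that $\pi$ induces an isomorphism between them. The number of minimal generators of $\Ann(F)$ in multidegree $\mathbf{1}$ equals $\dim_\CC \Ann(F)_{\mathbf{1}}/M$, where
\[
M := \sum_{i=1}^d S_{\mathbf{e}_i} \cdot \Ann(F)_{\mathbf{1}-\mathbf{e}_i},
\]
since every summand $S_{\mathbf{u}}\cdot\Ann(F)_{\mathbf{1}-\mathbf{u}}$ with $\mathbf{0}<\mathbf{u}\leq\mathbf{1}$ is contained in some $S_{\mathbf{e}_i}\cdot\Ann(F)_{\mathbf{1}-\mathbf{e}_i}$. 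Analogously, $\Ann(p_F)$ has $\dim_\CC \Ann(p_F)_d/N$ minimal generators in degree $d$, with $N := V_1\cdot\Ann(p_F)_{d-1}$.

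By Lemma~\ref{lem:image_is_equal} the restriction of $\pi$ to $\Ann(F)_{\mathbf{1}}$ surjects onto $\Ann(p_F)_d$, and by Lemma~\ref{lem:ker_of_pi} its kernel is $(I_R)_{\mathbf{1}}$. Therefore it suffices to prove (a) $(I_R)_{\mathbf{1}}\subseteq M$ and (b) $\pi(M)=N$, since together they force $\pi$ to descend to an isomorphism $\Ann(F)_{\mathbf{1}}/M \to \Ann(p_F)_d/N$.

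For (a), a typical element of $(I_R)_{\mathbf{1}}$ has the form $\mathbf{m}\cdot q$ with $q = \alpha_{i,j}\alpha_{k,\ell}-\alpha_{i,\ell}\alpha_{k,j}$ (a generator of $I_R$, $i<k$) and $\mathbf{m}$ a monomial of multidegree $\mathbf{1}-\mathbf{e}_i-\mathbf{e}_k$. Since $d\geq 3$, one can factor $\mathbf{m} = \alpha_{m,*}\cdot \mathbf{m}'$ for some $m\notin\{i,k\}$; because $I_R\subseteq \Ann(F)$ (as $F$ is symmetric), the remainder $\mathbf{m}'\cdot q$ lies in $\Ann(F)_{\mathbf{1}-\mathbf{e}_m}$, so $\mathbf{m}\cdot q\in S_{\mathbf{e}_m}\cdot \Ann(F)_{\mathbf{1}-\mathbf{e}_m}\subseteq M$. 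For (b), the inclusion $\pi(\Ann(F)_{\mathbf{1}-\mathbf{e}_i})\subseteq \Ann(p_F)_{d-1}$ holds for every $i$: given $\theta \in \Ann(F)_{\mathbf{1}-\mathbf{e}_i}$, each product $\alpha_{i,k}\theta$ lies in $\Ann(F)_{\mathbf{1}}$, so by Lemma~\ref{lem:image_is_equal}, $\beta_k \pi(\theta) = \pi(\alpha_{i,k}\theta)\in \Ann(p_F)_d$ for all $k$; applying the standard apolar calculus (each $\beta_k$ picks out a coefficient of $\pi(\theta)\circ p_F \in P_1$) yields $\pi(\theta)\in \Ann(p_F)_{d-1}$. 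This gives $\pi(M)\subseteq N$. The reverse inclusion follows from Corollary~\ref{cor:image_of_degree_1-e_d}, which provides $\pi(\Ann(F)_{\mathbf{1}-\mathbf{e}_d}) = \Ann(p_F)_{d-1}$, so $\pi(M)\supseteq V_1\cdot \pi(\Ann(F)_{\mathbf{1}-\mathbf{e}_d}) = N$.

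The main obstacle is part (b), specifically the reverse inclusion $\pi(M)\supseteq N$: this is where the sharpness hypothesis on $F$ enters, through Corollary~\ref{cor:image_of_degree_1-e_d}. Everything else amounts to a diagram chase around the apolar ideals coupled with the standard description of minimal generators as a cokernel.
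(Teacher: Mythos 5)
Your proof is correct and follows essentially the same route as the paper: the paper sets $J=\sum_i(\Ann(F)_{\mathbf{1}-\mathbf{e}_i})$ (so $J_{\mathbf{1}}$ is your $M$), applies Lemma~\ref{lem:ker_of_pi} to count $\dim V_d/\pi(J_{\mathbf{1}})$, and identifies $\pi(J_{\mathbf{1}})$ with $V_1\cdot\Ann(p_F)_{d-1}$ via the same Corollary~\ref{cor:image_of_degree_1-e_d}; your phrasing as an induced isomorphism $\Ann(F)_{\mathbf{1}}/M\to\Ann(p_F)_d/N$ is just the quotient-space form of that dimension count. A small plus: your step (a), checking $(I_R)_{\mathbf{1}}\subseteq M$ using $d\geq 3$, makes explicit a hypothesis of Lemma~\ref{lem:ker_of_pi} that the paper leaves implicit.
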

	\begin{proof}
		Let $J = \sum_{i=1}^d (\Ann(F)_{\mathbf{1}-\mathbf{e}_i})$. If $\Ann(F)$ has $s$ minimal generators of degree $\mathbf{1}$, then $\dim_\CC (S/J)_{\mathbf{1}} = s + 1$.
		So $\dim_\CC V_d/\pi(J_{\mathbf{1}}) = s + 1$ by Lemma~\ref{lem:ker_of_pi}. 
		
		Since $F$ is symmetric, the image of $J_{\mathbf{1}-\mathbf{e}_i}$ under $\pi$ is independent of $i$.
		Then, we have $\pi(J_{\mathbf{1}}) = (\pi(J_{\mathbf{1}-\mathbf{e}_d}))_d$.
		By Corollary~\ref{cor:image_of_degree_1-e_d} we have $\pi(J_{\mathbf{1}-\mathbf{e}_d}) = \pi(\Ann(F)_{\mathbf{1}-\mathbf{e}_d}) =  \Ann(p_F)_{d-1}$. From $\dim_\CC V_d/ \pi(J_{\mathbf{1}}) = s + 1$ we conclude that $\Ann(p_F)$ has $s$ minimal homogeneous generators of degree $d$. 
	\end{proof}
	
	\begin{lemma}\label{lem:containment}
		If $I = \sum_{0 < \mathbf{u} < \mathbf{1}} (\Ann(F)_\mathbf{u})$, then $\pi(I_{(d-1)\mathbf{e}_1+\mathbf{e}_2}) \subseteq \Ann(p_F)_{d}$.
	\end{lemma}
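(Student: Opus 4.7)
My plan is to identify which generators of $I$ can contribute to multidegree $(d-1)\mathbf{e}_1 + \mathbf{e}_2$, use conciseness to reduce to a single case, and then promote apolarity from degree $d$ down to degree $2$. A generator $\theta \in \Ann(F)_{\mathbf{u}}$ with $0 < \mathbf{u} < \mathbf{1}$ that appears in such a decomposition must satisfy $\mathbf{u} \leq (d-1)\mathbf{e}_1 + \mathbf{e}_2$ componentwise, which leaves only $\mathbf{u} \in \{\mathbf{e}_1, \mathbf{e}_2, \mathbf{e}_1 + \mathbf{e}_2\}$. Since $F$ is concise and symmetric, $\Ann(F)_{\mathbf{e}_i} = 0$ for each $i$, so the only surviving terms come from $\mathbf{u} = \mathbf{e}_1 + \mathbf{e}_2$. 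Every element of $I_{(d-1)\mathbf{e}_1 + \mathbf{e}_2}$ therefore has the form $\sum_k \theta_k g_k$ with $\theta_k \in \Ann(F)_{\mathbf{e}_1 + \mathbf{e}_2}$ and $g_k \in S_{(d-2)\mathbf{e}_1}$.

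The main step is to show $\pi(\Ann(F)_{\mathbf{e}_1 + \mathbf{e}_2}) \subseteq \Ann(p_F)_2$. Given such a $\theta$, for every $h \in S_{\mathbf{1} - \mathbf{e}_1 - \mathbf{e}_2}$ the product $\theta h$ lies in $\Ann(F)_{\mathbf{1}}$, so Lemma~\ref{lem:image_is_equal} yields $\pi(\theta)\pi(h) = \pi(\theta h) \in \Ann(p_F)_d$. A direct monomial inspection shows that $\pi$ maps $S_{\mathbf{1}-\mathbf{e}_1-\mathbf{e}_2}$ onto $V_{d-2}$, so $\pi(h)$ ranges over all of $V_{d-2}$ as $h$ varies. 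Hence $\pi(\theta)\cdot V_{d-2} \subseteq \Ann(p_F)_d$, and the perfect apolarity pairing between $V_2$ and $P_2$ forces $\pi(\theta)\circ p_F = 0$, i.e., $\pi(\theta) \in \Ann(p_F)_2$.

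With this in hand, $\pi(\theta_k g_k) = \pi(\theta_k)\pi(g_k)$ lies in $\Ann(p_F)_2 \cdot V_{d-2} \subseteq \Ann(p_F)_d$ because $\Ann(p_F)$ is an ideal, and summation over $k$ finishes the proof. I expect the only real subtlety to be the first reduction step, where conciseness and the symmetry of $F$ are invoked together to kill the $\mathbf{e}_1$ and $\mathbf{e}_2$ contributions; once that is in place, Lemma~\ref{lem:image_is_equal} and the non-degeneracy of the apolarity pairing do the rest mechanically.
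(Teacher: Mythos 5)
Your proof is correct, but it takes a genuinely different route from the paper's. The paper argues by an inductive ladder through the multidegrees $s\mathbf{e}_1+\mathbf{e}_2+\cdots+\mathbf{e}_{d-s}$, alternating between two moves: multiplying by $S_{\mathbf{e}_1}$ (using that $I$ has no generators in multidegrees that fail $0<\mathbf{u}<\mathbf{1}$) and ``dividing'' by the variables of one Segre factor (using that $\deg p_F=d$), with base case $\pi(I_{\mathbf{1}-\mathbf{e}_d})\subseteq \Ann(p_F)_{d-1}$ supplied by Corollary~\ref{cor:image_of_degree_1-e_d}. You instead note that conciseness kills the generators of degrees $\mathbf{e}_1$ and $\mathbf{e}_2$, so that $I_{(d-1)\mathbf{e}_1+\mathbf{e}_2}=S_{(d-2)\mathbf{e}_1}\cdot\Ann(F)_{\mathbf{e}_1+\mathbf{e}_2}$, and then prove the sharper local statement $\pi(\Ann(F)_{\mathbf{e}_1+\mathbf{e}_2})\subseteq\Ann(p_F)_2$, after which the ideal property of $\Ann(p_F)$ finishes the argument. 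Your version is shorter, isolates a reusable degree-two analogue of Lemma~\ref{lem:image_is_equal}, and does not route through Corollary~\ref{cor:image_of_degree_1-e_d}; the paper's induction stays entirely in the top total degrees $d-1$ and $d$, which matches how the ideal $I$ is manipulated elsewhere in the section. One small correction: in your main step the nondegenerate pairing you need is the one between $V_{d-2}$ and $P_{d-2}$ (you are showing that $\pi(\theta)\circ p_F\in P_{d-2}$ is annihilated by all of $V_{d-2}$), not the one between $V_2$ and $P_2$.
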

	\begin{proof}
		Consider two families of claims depending on $s$:
		\begin{align}
			\pi(I_{s\mathbf{e}_1 + \mathbf{e}_2 + \cdots + \mathbf{e}_{d-s}}) &\subseteq \Ann(p_F)_{d-1} \tag*{A(s)}\\
			\pi(I_{(s+1)\mathbf{e}_1 + \mathbf{e}_2 + \cdots + \mathbf{e}_{d-s}}) &\subseteq (\Ann(p_F)_{d-1})_d \tag*{B(s)}.
		\end{align}
		$A(1)$ holds by Corollary~\ref{cor:image_of_degree_1-e_d} and our goal is to prove that $B(d-2)$ holds. If $s\geq 1$, then $I_{(s+1)\mathbf{e}_1 + \mathbf{e}_2 + \cdots + \mathbf{e}_{d-s}} = S_{\mathbf{e}_1} \cdot I_{s\mathbf{e}_1 + \mathbf{e}_2 + \cdots + \mathbf{e}_{d-s}}$, because there are no new generators in multidegree $(s+1)\mathbf{e}_1 + \mathbf{e}_2 + \cdots + \mathbf{e}_{d-s}$. Consequently, $B(s)$ follows from $A(s)$ for all $1\leq s \leq d-2$. 
		
		Assume that $1\leq s \leq d-3$ and $B(s)$ holds. Let $\theta \in I_{(s+1)\mathbf{e}_1 + \mathbf{e}_2 + \cdots + \mathbf{e}_{d-s-1}}$. Then, for every $1\leq j \leq n$, $\alpha_{d-s,j}\cdot\theta \in I_{(s+1)\mathbf{e}_1 + \mathbf{e}_2 + \cdots + \mathbf{e}_{d-s}}$, so by $B(s)$ we have $\beta_{j}\cdot \pi(\theta) \in \Ann(p_F)_d$ for every $j$.
		It follows that $\pi(\theta) \in \Ann(p_F)_{d-1}$, i.e., $A(s+1)$ holds.
	\end{proof}

	\begin{proposition}\label{prop:generalizes}
		If $d=3$, a concise symmetric minimal border rank tensor is sharp if and only if it is $111$-sharp as in Definition \ref{def:111sharp}.
	\end{proposition}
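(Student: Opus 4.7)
The plan is to verify the two implications separately. The direction sharp $\Rightarrow$ $111$-sharp is immediate, since condition (1) of Definition~\ref{def:sharp} is literally the $111$-sharp condition of Definition~\ref{def:111sharp}. For the converse, assume $F$ is $111$-sharp and check conditions (1)--(3) of Definition~\ref{def:sharp}; condition (1) is the hypothesis.

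Condition (2) asserts $\HF(S/\Ann(F),\mathbf{u})=n$ for all $0<\mathbf{u}<\mathbf{1}$. For $d=3$ any such $\mathbf{u}$ is one of $\mathbf{e}_i$ or $\mathbf{1}-\mathbf{e}_k = \mathbf{e}_i+\mathbf{e}_j$, and the identity $\HF(S/\Ann(F),\mathbf{u})=\HF(S/\Ann(F),\mathbf{1}-\mathbf{u})$ (rank of a flattening equals rank of its transpose) reduces everything to the conciseness equalities $\HF(S/\Ann(F),\mathbf{e}_i)=n$. The $s=1$ cases of condition (3) coincide with condition (2), so they hold as well.

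The main work is condition (3) at $s=2$. Using the $\mathfrak{S}_3$-action on the three tensor factors of $F$, I may reduce to $i=1$, $j=2$, where the goal becomes $\HF(S/(\Ann(F)_{(1,1,0)}),(2,1,0))=n$. The plan is to push this computation through the map $\pi$: symmetry of $F$ gives $I_R\subseteq \Ann(F)$, hence $(I_R)_{(2,1,0)}\subseteq S_{(1,0,0)}\cdot\Ann(F)_{(1,1,0)}$, and Lemma~\ref{lem:ker_of_pi} rewrites the target as $\dim_\CC V_3/\bigl(V_1\cdot\pi(\Ann(F)_{(1,1,0)})\bigr)=n$. Next I identify $\pi(\Ann(F)_{(1,1,0)})=\Ann(p_F)_2$: the inclusion $\subseteq$ follows by multiplying any representative $\theta\in\Ann(F)_{(1,1,0)}$ by $\alpha_{3,k}$ and invoking Lemma~\ref{lem:image_is_equal} (so $\beta_k\pi(\theta)\in \Ann(p_F)_3$ for every $k$, giving $\pi(\theta)\in\Ann(p_F)_2$); equality then follows from a dimension count since $V/\Ann(p_F)$ is Artinian Gorenstein of socle degree $3$ with Hilbert function $(1,n,n,1)$. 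Under this identification the target becomes $\HF(V/(\Ann(p_F)_2),3)=n$, which is equivalent to $\Ann(p_F)$ having exactly $n-1$ minimal generators in degree $3$, and this follows at once from Lemma~\ref{lem:also_sharp_general} combined with the $111$-sharp hypothesis.

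The main obstacle I expect is precisely the identification $\pi(\Ann(F)_{(1,1,0)})=\Ann(p_F)_2$, since everything else reduces to either the rank-of-transpose observation (for condition (2)) or a direct application of the already established Lemmas~\ref{lem:ker_of_pi}, \ref{lem:image_is_equal}, and~\ref{lem:also_sharp_general}.
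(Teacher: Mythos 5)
Your proposal is correct and follows essentially the same route as the paper: condition (2) is automatic for concise $3$-tensors, condition (3) at $s=2$ is handled by pushing through $\pi$ via Lemma~\ref{lem:ker_of_pi}, identifying $\pi(\Ann(F)_{\mathbf{e}_i+\mathbf{e}_j})=\Ann(p_F)_2$ (your inline argument is precisely the paper's Corollary~\ref{cor:image_of_degree_1-e_d}, whose sharpness hypothesis reduces to conciseness when $d=3$), and concluding with Lemma~\ref{lem:also_sharp_general} and the $111$-sharp hypothesis. The only differences are cosmetic (the $\mathfrak{S}_3$ reduction to $i=1$, $j=2$, and reproving the corollary rather than citing it).
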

	\begin{proof}
		For $d=3$, every concise tensor $F$ satisfies condition~\eqref{it:wild_2} in Definition~\ref{def:sharp} and $F$ is $111$-sharp if and only if it satisfies condition
		\eqref{it:wild_1}. So it is sufficient to prove that, for $d=3$, condition \eqref{it:wild_1} implies condition \eqref{it:wild_3}.
		
		By Lemma~\ref{lem:ker_of_pi}, in order to show that  $\mathrm{HF} (S/(\Ann(F)_{\mathbf{e}_i + \mathbf{e}_j}), 2\mathbf{e}_i + \mathbf{e}_j) = n$, it is enough to show that 
		$\dim_\CC V_3/\pi((\Ann(F)_{\mathbf{e}_i + \mathbf{e}_j})_{2\mathbf{e}_i + \mathbf{e}_j}) = n$. From Corollary~\ref{cor:image_of_degree_1-e_d} one gets 
		$\pi(\Ann(F)_{\mathbf{e}_i + \mathbf{e}_j}) = \Ann(p_F)_2$ and hence,
		\[
		\dim_\CC V_3/\pi((\Ann(F)_{\mathbf{e}_i + \mathbf{e}_j})_{2\mathbf{e}_i + \mathbf{e}_j}) = \dim_\CC V_3/(\Ann(p_F)_2)_3 = n,
		\]
		by the assumption that $F$ is $111$-sharp together with Lemma~\ref{lem:also_sharp_general}.
	\end{proof}
	
	\begin{theorem}\label{thm:sharp}
		If $d\geq 3$ and $F\in (\CC^n)^{\otimes d}$ is a concise symmetric sharp tensor of minimal border rank, then the homomorphism $\rho: S\longrightarrow V$
		induces a morphism of reduced schemes $\rho_{\VSPb}: \VSPb(F,n)\longrightarrow \VSPb(p_F,n)$. In particular, $\brk(F) = \brk_S(p_F)$.
	\end{theorem}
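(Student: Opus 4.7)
The plan is to verify condition~(iii) of Theorem~\ref{thm:equality_of_br_in_terms_of_vspb} for every $J\in \VSPb(F,n)$. By that theorem, this implies $\rho(J)\in \VSPb(p_F, n)$ for every such $J$, so combined with Proposition~\ref{prop:map_of_Slips} (applied with $Z=Y$ inside $W=X$) the assignment $J\mapsto \rho(J)$ is the restriction to reduced schemes of a morphism of multigraded Hilbert schemes, giving the desired $\rho_{\VSPb}$. The nonemptiness of $\VSPb(p_F,n)$ together with the automatic inequality $\brk_S(p_F)\geq \brk(F)$ then forces $\brk(F)=\brk_S(p_F)=n$.

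The first step uses sharpness to pin down $J$ in many multidegrees. Condition~\eqref{it:wild_2}, combined with $h_{n,X}(\mathbf{u})=n$ for every nonzero $\mathbf{u}\in\{0,1\}^d$ and $J\subseteq \Ann(F)$, forces by codimension comparison $J_\mathbf{u}=\Ann(F)_\mathbf{u}$ for all $\mathbf{u}\in\{0,1\}^d\setminus\{\mathbf{0},\mathbf{1}\}$, which entails $(\Ann(F)_{\mathbf{e}_i+\mathbf{e}_j})\subseteq J$ as ideals. Condition~\eqref{it:wild_3} then upgrades this to $J_{s\mathbf{e}_i+\mathbf{e}_j}=(\Ann(F)_{\mathbf{e}_i+\mathbf{e}_j})_{s\mathbf{e}_i+\mathbf{e}_j}$ for all $i\neq j$ and $1\leq s\leq d-1$, again by matching codimensions.

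The second step verifies $\pi(J_{k\mathbf{e}_1})\subseteq \Ann(p_F)_k$ for each $1\leq k\leq d-1$: the case $k=1$ is immediate from conciseness, and for $2\leq k\leq d-1$ applying $\pi$ to the equality of the previous paragraph at multidegree $k\mathbf{e}_1+\mathbf{e}_2$ yields
\[
V_1\cdot \pi(J_{k\mathbf{e}_1})\subseteq \pi(J_{k\mathbf{e}_1+\mathbf{e}_2})=\pi(\Ann(F)_{\mathbf{e}_1+\mathbf{e}_2})\cdot V_{k-1}\subseteq \Ann(p_F)_2\cdot V_{k-1}\subseteq \Ann(p_F)_{k+1},
\]
where $\pi(\Ann(F)_{\mathbf{e}_1+\mathbf{e}_2})\subseteq \Ann(p_F)_2$ follows from the lifting argument in Corollary~\ref{cor:image_of_degree_1-e_d} (multiply by $S_{\mathbf{1}-\mathbf{e}_1-\mathbf{e}_2}$, apply Lemma~\ref{lem:image_of_degree_111}, and use that $\Ann(p_F)$ is saturated), and the same saturation then yields $\pi(J_{k\mathbf{e}_1})\subseteq \Ann(p_F)_k$. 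Evaluated at $k=d-1$, conciseness of $p_F$ gives $\mathrm{HF}(V/\Ann(p_F),d-1)=n$ by catalecticant duality, so $\pi(J_{(d-1)\mathbf{e}_1})$ and $\Ann(p_F)_{d-1}$ are both of codimension $n$ in $V_{d-1}$, promoting the inclusion to an equality $\pi(J_{(d-1)\mathbf{e}_1})=\Ann(p_F)_{d-1}$.

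The main obstacle is the endpoint $k=d$, where the bound $s\leq d-1$ in~\eqref{it:wild_3} is unavailable and only dimension bookkeeping can close the argument. The crucial input is sharpness~\eqref{it:wild_1}, which via Lemma~\ref{lem:also_sharp_general} gives that $\Ann(p_F)$ has exactly $n-1$ minimal generators of degree $d$. On one hand, from Step~1 we have $\sum_{i=1}^d S_{\mathbf{e}_i}\cdot J_{\mathbf{1}-\mathbf{e}_i}=\sum_{i=1}^d S_{\mathbf{e}_i}\cdot \Ann(F)_{\mathbf{1}-\mathbf{e}_i}\subseteq J_{\mathbf{1}}$, so applying $\pi$ and using Corollary~\ref{cor:image_of_degree_1-e_d} gives $\pi(J_\mathbf{1})\supseteq V_1\cdot \Ann(p_F)_{d-1}$; a dimension count ($\dim \pi(J_\mathbf{1})=\binom{n+d-1}{d}-n=\dim V_1\cdot \Ann(p_F)_{d-1}$, the latter from the min-gen count) upgrades this to $\pi(J_\mathbf{1})=V_1\cdot \Ann(p_F)_{d-1}$. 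Multiplying the equality at $k=d-1$ by $V_1$ and combining gives
\[
\pi(J_\mathbf{1})=V_1\cdot \Ann(p_F)_{d-1}=V_1\cdot \pi(J_{(d-1)\mathbf{e}_1})\subseteq \pi(J_{d\mathbf{e}_1}),
\]
and one more dimension count $\dim \pi(J_{d\mathbf{e}_1})=\binom{n+d-1}{d}-n=\dim \pi(J_\mathbf{1})$ (using $(I_R)_{d\mathbf{e}_1}=0$ and the Hilbert function of $J$) forces equality $\pi(J_\mathbf{1})=\pi(J_{d\mathbf{e}_1})$, which is a strong form of condition~(iii) and completes the verification.
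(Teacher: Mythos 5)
Your proof is correct and follows essentially the same route as the paper's: sharpness conditions \eqref{it:wild_2} and \eqref{it:wild_3} pin down $J$ in the multidegrees $s\mathbf{e}_1+\mathbf{e}_2$, an apolarity-descent argument together with Corollary~\ref{cor:image_of_degree_1-e_d} gives $\pi(J_{(d-1)\mathbf{e}_1})=\Ann(p_F)_{d-1}$, and the minimal-generator count from Lemma~\ref{lem:also_sharp_general} (via sharpness condition \eqref{it:wild_1}) closes the top degree. The only differences are packaging: you verify condition (iii) of Theorem~\ref{thm:equality_of_br_in_terms_of_vspb} via $\pi(J_{d\mathbf{e}_1})=\pi(J_{\mathbf{1}})$, which, since $\rho$ and $\pi$ agree on $S_{d\mathbf{e}_1}$, is the same computation as the paper's direct proof that $\rho(K)_d\subseteq\Ann(p_F)_d$, and you replace the induction of Lemma~\ref{lem:containment} by the equivalent direct observation that $\pi(\Ann(F)_{\mathbf{e}_1+\mathbf{e}_2})\subseteq\Ann(p_F)_2$.
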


	\begin{proof}
		By Proposition~\ref{prop:map_of_Slips} the ring homomorphism $\rho$ induces a morphism of multigraded Hilbert schemes that restricts to a morphism $\mathrm{Slip}_{n,X}\to \mathrm{Slip}_{n,Y}$.
		
		Let $K\in \VSPb(F,n)$. In order, to show that the above morphism restricts a morphism $\rho_{\VSPb}: \VSPb(F,n)\longrightarrow \VSPb(p_F,n)$ it is enough to show that $\rho(K) \subseteq \Ann(p_F)$. Let $I = \sum_{0 < \mathbf{u} < \mathbf{1}} (\Ann(F)_\mathbf{u})$. Since $F$ is sharp, we have $I \subseteq K$. We claim that $K_{(d-1)\mathbf{e}_1 + \mathbf{e}_2} = I_{(d-1)\mathbf{e}_1 + \mathbf{e}_2}$.
		Indeed, since $F$ is sharp, we have $\dim_\CC (S/I)_{(d-1)\mathbf{e}_1 + \mathbf{e}_2} = n = \dim_\CC (S/K)_{(d-1)\mathbf{e}_1 + \mathbf{e}_2}$, where the
		first equality follows from property \eqref{it:wild_3} of Definition \ref{def:sharp}. 
		
		We show that $\rho(K_{(d-1)\mathbf{e}_1})  = \Ann(p_F)_{d-1}$. Since $p_F$ is concise, it is enough to show that $\rho(K_{(d-1)\mathbf{e}_1}) \subseteq \Ann(p_F)_{d-1}$. Furthermore, since $p_F$ is of degree $d$, it is sufficient to show that $V_1\cdot \rho(K_{(d-1)\mathbf{e}_1}) \subseteq \Ann(p_F)_{d}$. Let $\theta \in K_{(d-1)\mathbf{e}_1}$ and $j\in \{1,\ldots, n\}$. Then $\alpha_{2,j}\cdot \theta \in K_{(d-1)\mathbf{e}_1 + \mathbf{e}_2} = I_{(d-1)\mathbf{e}_1 + \mathbf{e}_2}$. It follows from Lemma~\ref{lem:containment} that 
		\[
		\beta_j\cdot \rho(\theta) = \pi(\alpha_{2,j}\cdot \theta) \in \Ann(p_F)_d,
		\]
		for any $j\in \{1,\ldots, n\}$.
		Since $p_F$ has degree $d$, then $\rho(\theta)\in \Ann(p_F)_{d-1}$, so $\rho(K)_{d-1} \subseteq \Ann(p_F)_{d-1}$. These vector spaces have the same dimension, so they are equal. 
		Thus, 
		\[
		\rho(K)_d \supseteq (\rho(K)_{d-1})_d = (\Ann(p_F)_{d-1})_d. 
		\]
		Since $F$ is sharp, $\Ann(p_F)$ has $n-1$ minimal generators in degree $d$ by Lemma~\ref{lem:also_sharp_general}. It follows that the rightmost vector space is of codimension $n$ in $V_d$.
		By the equality of dimensions, $\rho(K)_d = (\Ann(p_F)_{d-1})_d$
		and in particular, $\rho(K) \subseteq \Ann(p_F)$.
		
		Since $F$ has minimal border rank $\VSPb(F,n)$ is nonempty by Theorem~\ref{mainbb}. Therefore, so is $\VSPb(p_F, n)$. Using Theorem~\ref{mainbb} we conclude that $\brk_S(p_F) = n$.
	\end{proof}
	Corollary \ref{cor:nleq d+1} says that any minimal border rank tensor $F\in (\CC^n)^{\otimes 3}$ whenever $n\leq 4$ satisfies border Comon's conjecture. However, we can do a little bit better using a result from \cite{JLP24}.  
	
	\begin{proposition}\label{cor:n leq 5}
		Any minimal border rank symmetric concise tensor $F\in (\CC^n)^{\otimes 3}$ for $n\leq 5$ satisfies $\brk(F)=\brk_S(p_F)$.
		\begin{proof}
			If $n\leq 5$ then every minimal border rank tensor is $111$-sharp \cite[Subsection~1.4.1]{JLP24}. So Theorem \ref{thm:sharp} and Proposition~\ref{prop:generalizes} establish the statement.
		\end{proof}
	\end{proposition}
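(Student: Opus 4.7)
The proof is essentially an immediate synthesis of results already established in this section and one external input from \cite{JLP24}. The plan is to reduce to the sharpness hypothesis of Theorem~\ref{thm:sharp} by way of Proposition~\ref{prop:generalizes}, then invoke the black-box classification result that in small dimension every concise minimal border rank $3$-tensor is $111$-sharp.

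Concretely, I would proceed in three short steps. First, I would note that since $d=3$, the hypothesis ``concise symmetric minimal border rank'' on $F$ ensures $F\in (\CC^n)^{\otimes 3}$ is concise and of border rank $n$. Second, I would cite \cite[Subsection~1.4.1]{JLP24}, which asserts that for $n\leq 5$ any concise minimal border rank $3$-tensor is $111$-sharp in the sense of Definition~\ref{def:111sharp}, i.e.\ its annihilator has exactly $n-1$ minimal generators of multidegree $(1,1,1)$. Applying this to our $F$ gives $111$-sharpness. Third, by Proposition~\ref{prop:generalizes}, for $d=3$ a concise symmetric minimal border rank tensor is sharp in the sense of Definition~\ref{def:sharp} precisely when it is $111$-sharp; hence $F$ is sharp.

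With sharpness in hand, Theorem~\ref{thm:sharp} applies directly and yields $\brk(F)=\brk_S(p_F)$, completing the argument. There is no genuine obstacle here: all the work is done by Theorem~\ref{thm:sharp}, Proposition~\ref{prop:generalizes}, and the cited classification, and the only thing to verify is that the hypotheses line up. The ``main obstacle,'' insofar as there is one, lies outside this paper, namely the nontrivial structural result of Jelisiejew--Landsberg--Pal that forces $111$-sharpness for all concise minimal border rank $3$-tensors when $n\leq 5$; once that is granted, the present proof is just one line of machinery application.
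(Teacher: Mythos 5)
Your proposal is correct and follows exactly the paper's own argument: invoke \cite[Subsection~1.4.1]{JLP24} to get $111$-sharpness for $n\leq 5$, pass to sharpness via Proposition~\ref{prop:generalizes}, and conclude with Theorem~\ref{thm:sharp}. No differences worth noting.
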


	Theorem \ref{thm:sharp} also slightly strengthens a result on cubic forms by Seigal \cite[Corollary~1.6]{Sei20}. 
	
	\begin{corollary}
		If $\brk_S(p_F)\leq 6$, then $\brk(F) = \brk_S(p_F)$.
	\end{corollary}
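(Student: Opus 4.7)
My plan is a conciseness reduction followed by a short case analysis on the conciseness dimension $n$. Because $F$ is symmetric, its three flattening maps share a common image $W\subseteq \CC^n$, so $F$ actually lies in $W^{\otimes 3}$, and restricted there it is a symmetric concise tensor with the same associated cubic $p_F$ and the same border and symmetric border ranks. Writing $n=\dim W$, the inequalities $n\leq \brk(F)\leq \brk_S(p_F)\leq 6$ force $n\leq 6$.

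If $n=6$ then conciseness gives $\brk(F)\geq 6$, and combined with $\brk(F)\leq \brk_S(p_F)\leq 6$ this immediately yields $\brk(F)=\brk_S(p_F)=6$. So assume $n\leq 5$. A short Alexander--Hirschowitz computation shows that $\sigma_{n+1}(v_3(\PP^{n-1}))$ fills $\PP(S^3\CC^n)$ for $n\leq 4$ (no exceptional pair arises for $d=3$ in this range, since the only defective case relevant to cubics is $\sigma_7(v_3(\PP^4))$), so $\brk_S(p_F)\leq n+1$ there; for $n=5$ the same bound $\brk_S(p_F)\leq 6 = n+1$ is given by the hypothesis. Hence $\brk_S(p_F)\in\{n,n+1\}$ in every remaining subcase.

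The subcase $\brk_S(p_F)=n$ gives $\brk(F)=n=\brk_S(p_F)$ for free. The only nontrivial possibility is $\brk_S(p_F)=n+1$, where $\brk(F)\in\{n,n+1\}$. Here I would apply Proposition~\ref{cor:n leq 5} (itself a consequence of Theorem~\ref{thm:sharp}) contrapositively: if $\brk(F)$ were equal to $n$, i.e., $F$ had minimal border rank, then Proposition~\ref{cor:n leq 5} would force $\brk_S(p_F)=n$, contradicting $\brk_S(p_F)=n+1$. Hence $\brk(F)=n+1=\brk_S(p_F)$, completing the proof. The only delicate step is the Alexander--Hirschowitz verification ruling out defectivity in the relevant low-dimensional Veronese secants; the rest is a clean deployment of Proposition~\ref{cor:n leq 5}.
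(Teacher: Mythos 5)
Your proof is correct, but it takes a genuinely different route from the paper's. The paper first invokes Seigal's Corollary~1.6 \cite{Sei20} to reduce to the case $\brk_S(p_F)=6$, handles $n=6$ trivially and $n=5$ via Proposition~\ref{cor:n leq 5}, and then quotes Seigal's Theorem~1.5 \cite{Sei20} for cubics in at most four variables. You bypass \cite{Sei20} entirely: after the (harmless, and under the paper's standing conventions automatic) conciseness reduction, you use the Alexander--Hirschowitz theorem to pin $\brk_S(p_F)$ into $\{n,n+1\}$ for $n\leq 4$ --- the only cubic defect being $\sigma_7(v_3(\PP^4))$, which occurs only at $n=5$, where the hypothesis $\brk_S(p_F)\leq 6$ supplies the same bound --- and then resolve the dichotomy by conciseness together with the contrapositive of Proposition~\ref{cor:n leq 5}. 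The case analysis checks out: the dimension counts $(n+1)n\geq\binom{n+2}{3}$ for $n\leq 4$ are correct, and Proposition~\ref{cor:n leq 5} applies exactly in the problematic subcase $\brk(F)=n$ with $n\leq 5$, forcing $\brk_S(p_F)=n$ and yielding the desired contradiction. What your route buys is self-containedness: the corollary is derived from the paper's own Theorem~\ref{thm:sharp} (through Proposition~\ref{cor:n leq 5}) plus classical interpolation, so in particular it reproves the border-rank content of Seigal's Corollary~1.6 rather than relying on it. What the paper's route buys is brevity, outsourcing the low-rank and few-variable cases to \cite{Sei20}.
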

	\begin{proof}
		By \cite[Corollary~1.6]{Sei20} we may assume that $\brk_S(p_F) = 6$. If $F$ is a concise tensor in $(\CC^6)^{\otimes 3}$, the conclusion is clear. If $F$ is a concise tensor of minimal border rank in $(\CC^5)^{\otimes 3}$, then Proposition \ref{cor:n leq 5} implies $\brk(F) = \brk_S(p_F) = 5$. Thus, we may assume that, up to the action of the general linear group, $p_F$ is a cubic polynomial in four variables. In this case \cite[Theorem~1.5]{Sei20} implies $\brk_S(p_F) = \brk(F)$.
	\end{proof}

	\bibliographystyle{amsplain}
	
	\begin{small}
		
	\end{small}
\end{document}